\tikzset{
    labl/.style={anchor=south, rotate=90, inner sep=.5mm}
}
\tikzset{>=latex}
\tikzstyle{vthick}=[line width=1.8pt]
\newcommand\drawpath[2]{%
  \foreach \too [count=\c from 1] in {#1}
  {
  \ifthenelse{\c=1}
  {\xdef\from{\too}}
  {\path (\from) edge [->, #2] (\too);
    \xdef\from{\too}}
  };
}
\newtheorem{thm}{Theorem}
\newtheorem{theorem}[thm]{Theorem}
\newtheorem{thmintro}{Theorem}
\newtheorem{prop}[thm]{Proposition}
\newtheorem{lem}[thm]{Lemma}
\newtheorem{lemma}[thm]{Lemma}
\newtheorem{cor}[thm]{Corollary}
\theoremstyle{definition}
\newtheorem{definition}[thm]{Definition}
\newtheorem{example}[thm]{Example}
\newtheorem{rem}[thm]{Remark}
\numberwithin{equation}{section}
\numberwithin{thm}{section}
\newcommand{\BC}{\ensuremath{\mathbb {C}}\xspace}
\newcommand{{\BG}}{\ensuremath{\mathbb {G}}\xspace}
\newcommand{{\BK}}{\ensuremath{\mathbb {K}}\xspace}
\newcommand{\BN}{\ensuremath{\mathbb {N}}\xspace}
\newcommand{\BQ}{\ensuremath{\mathbb {Q}}\xspace}
\newcommand{\BR}{\ensuremath{\mathbb {R}}\xspace}
\newcommand{\BZ}{\ensuremath{\mathbb {Z}}\xspace}
\newcommand{\CD}{\ensuremath{\mathcal {D}}\xspace}
\newcommand{\CL}{\ensuremath{\mathcal {L}}\xspace}
\newcommand{\CP}{\ensuremath{\mathcal {P}}\xspace}
\newcommand{\CU}{\ensuremath{\mathcal {U}}\xspace}
\newcommand{\x}{\times}
\newcommand{\ox}{\otimes}
\newcommand{\wt}{\widetilde}
\newcommand{\bfd}{\textnormal{\textbf{d}}}
\newcommand{\bfi}{\textnormal{\textbf{i}}}
\newcommand{\bfj}{\textnormal{\textbf{j}}}
\newcommand{\bfs}{\textnormal{\textbf{s}}}
\newcommand{\bfB}{\textnormal{\textbf{B}}}
\newcommand{\bfU}{\textnormal{\textbf{U}}}
\newcommand{\uf}{\textnormal{uf}}
\newcommand{\fro}{\textnormal{fr}}
\newcommand{\Env}{\textnormal{Env }}
\newcommand{\Spec}{\textnormal{Spec }}
\newcommand{\weight}{\textnormal{wt}}
\newcommand{\ve}{\varepsilon}
\newcommand{\isoto}{\xrightarrow{\sim}}
\newcommand{\LP}{\CL\CP}
\newcommand{\cl}{\textnormal{cl}}
\let\emptyset\varnothing
\title{Reductive monoids and cluster algebras}
\begin{document}

\author[Jinfeng Song]{Jinfeng Song}
\address{Department of Mathematics, The Hong Kong University of Science and Technology, Clear Water Bay, Hong Kong SAR.}
\email{jfsong@ust.hk}

\author[Jeff York Ye]{Jeff York Ye}
\address{Department of Mathematics, National University of Singapore, Singapore.}
\email{e1124873@u.nus.edu}

\subjclass[2020]{13F60,20M32}

\begin{abstract}
We show that the coordinate ring of the Vinberg monoid of a simply connected semisimple complex group is an upper cluster algebra. As an application, we construct cluster structures on a large class of flat reductive monoids. After localization, we obtain cluster structures on any connected reductive group whose commutator group is simply connected.
\end{abstract}

\maketitle

\section{Introduction}
\label{sec:intro}
In this paper, all varieties are defined over $\BC$.

\subsection{The Vinberg monoid}
\label{sec:intro:monoid}
An \emph{algebraic monoid} is an affine variety with a monoid structure such that the multiplication map is algebraic. An algebraic monoid $M$ is called \emph{reductive} if its group of units $G(M)$ is reductive. A systematic study of reductive monoids starts from Putcha \cite{Put84} and Renner \cite{Ren85}, and it has attracted many attentions since then. 

In the work of \cite{Vin95}, Vinberg classified reductive monoids using a representation theoretic approach. In particular, associated with any semisimple group $G$, Vinberg defined a universal object $\Env G$, in a family of flat reductive monoids, which is now called the \emph{Vinberg monoid}. The Vinberg monoid $\Env G$ is a reductive monoid whose group of units has the commutator group isomorphic to $G$, and any flat reductive monoid with this property can be obtained from $\Env G$ by a base change. 

Moreover, suppose $r$ is the rank of $G$. Then there is a canonical flat map 
\[
\pi: \Env G\longrightarrow \mathbb{C}^r
\]
onto the affine space, where the generic fibres are isomorphic to the group $G$, and the special fibre $\pi^{-1}(\mathbf{0})$ is the \emph{asymptotic semigroup} considered by Vinberg \cite{Vin95a}.

The Vinberg monoid together with the map $\pi$ have found significant applications in recent developments of the geometric representation theory, see \citelist{\cite{BNS16}\cite{FKM20}} for example.

\subsection{Cluster algebras}
\label{sec:intro:cluster}

The theory of cluster algebras was developed by Fomin and Zelevinsky \cite{FZ02} to study total positivity and dual canonical bases in semisimple groups. The coordinate rings of many varieties related to algebraic groups have shown to be (upper) cluster algebras (See \citelist{\cite{BFZ05}\cite{Wil13}\cite{Sco06}\cite{CGG+25}\cite{GLSS}} for example.). 

In contrast, cluster algebra structures on algebraic monoids have received little attention. It seems that the only nontrivial example is the cluster algebra structure on the space of $n\times n$ matrices from \cite{FWZ20}*{Theorem~6.6.1}.  

The existence of a cluster structure on a variety has a wide range of consequences. For example, it provides a total positivity structure, Poisson structure \cite{GSV10}, and a cluster theoretic canonical basis on its coordinate ring \cite{GHKK18}.

In this paper, we will focus on upper cluster algebras where not all frozen variables are inverted. We borrow the name \emph{partially compactified (upper) cluster algebras} from \cite{GHKK18} to highlight this choice of convention.

\subsection{Main results}
\label{sec:intro:main}

The main result of this paper is the follows.

\begin{thmintro}[Theorem~\ref{thm:mainthm} \& Corollary~\ref{cor:spec}]
\label{thm:1}
Let $G$ be simply connected. The coordinate ring of the Vinberg monoid $\Env G$ is a partially compactified upper cluster algebra where all frozen variables are not invertible. 

The map $\pi$ coincides with the specialization of certain frozen variables. In particular, the asymptotic semigroup is obtained by specializing certain frozen variables to 0.
\end{thmintro}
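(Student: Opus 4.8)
I will start from Vinberg's model
\[
\CO(\Env G)\;=\;\bigoplus_{\lambda\in P^+}\ \bigoplus_{\substack{\mu\in P^+\\ \lambda-\mu\in Q^+}}V_\mu\ox V_\mu^*,
\]
realised as the closure of the enhanced group $\hat G=(G\x T)/Z(G)$ inside $\prod_{i=1}^r\End(V_{\omega_i})\x\BA^r$. In this picture $z_1,\dots,z_r$ are the coordinates on the $\BA^r$-factor; they restrict on the units to the simple roots $\alpha_i$ pulled back along the abelianisation $\hat G\to\hat G^{\mathrm{ab}}$, one has $\Env G[z_1^{-1},\dots,z_r^{-1}]=\hat G$, and $\pi=(z_1,\dots,z_r)$ by construction. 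Consequently the second paragraph of the theorem reduces to two points: that $z_1,\dots,z_r$ figure among the non-invertible frozen variables of the seed to be built, and that $(z_1,\dots,z_r)$ is a radical ideal of $\CO(\Env G)$.

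\textbf{Step 1: an initial seed.} The big double Bruhat cell $\hat G^{w_0,w_0}$ is open and dense in $\Env G$, and on it one has the Berenstein--Fomin--Zelevinsky cluster structure attached to a reduced word of $w_0$ (cf.\ \cite{BFZ05}, \cite{GLSS}), whose cluster and frozen variables are generalised minors of the fundamental representations and whose localised upper cluster algebra is $\CO(\hat G^{w_0,w_0})$. Clearing denominators by monomials in $z_1,\dots,z_r$ turns all of these into regular functions on $\Env G$. The frozen variables then become regular functions cutting out precisely the boundary divisors of $\Env G$ --- the Bruhat-type divisors of $\hat G\subseteq\Env G$ and the divisors $\{z_i=0\}$, $i=1,\dots,r$, whose union is $\Env G\setminus\hat G$ --- so that in particular $z_1,\dots,z_r$ are among them. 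Let $\Sigma$ be the resulting seed: the same exchange matrix as the BFZ seed, but with \emph{every} frozen variable now declared non-invertible. Each cluster and frozen variable of $\Sigma$, and of each of its iterated mutations (these likewise extend from $\hat G^{w_0,w_0}$ after the same twist), is a regular non-unit on $\Env G$; since moreover a regular function on $\Env G$ has no pole along a frozen divisor, this gives the inclusion $\CO(\Env G)\subseteq\oU(\Sigma)$.

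\textbf{Step 2: globalisation (the main point).} For the reverse inclusion $\oU(\Sigma)\subseteq\CO(\Env G)$ --- and hence the first paragraph --- I will apply the standard criterion identifying coordinate rings with partially compactified upper cluster algebras (cf.\ \cite{GHKK18}, \cite{FWZ20}): since $\Env G$ is a normal affine variety (a flat reductive monoid, whose coordinate ring is the normal section ring above), it suffices to produce finitely many seeds whose cluster charts, together with the frozen divisors, cover $\Env G$ away from a closed subset of codimension at least two. This is the crux. One classifies the prime divisors of $\Env G$: the $r$ divisors $\{z_i=0\}$, the Bruhat-type boundary divisors of $\hat G$, and, outside these, the vanishing loci of cluster variables. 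The first two families are frozen, hence lie in every cluster chart, so one is reduced to the analogous covering statement inside the cluster variety $\hat G^{w_0,w_0}$, which is known; passing from $\hat G^{w_0,w_0}$ to $\Env G$ only glues back the frozen divisors, which those charts already contain. Combined with normality this yields $\oU(\Sigma)=\CO(\Env G)$ with all frozens non-invertible. The main obstacle is exactly this divisor analysis --- enumerating the prime divisors of $\Env G$ and matching the non-frozen ones with cluster variables of an explicit finite family of seeds --- together with the Step 1 bookkeeping that, after the correct $z$-twists, the BFZ frozen variables are precisely the boundary equations and that $z_1,\dots,z_r$ sit among them.

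\textbf{Step 3: the map $\pi$.} By construction $\pi=(z_1,\dots,z_r)$ is the tuple formed by $r$ of the frozen variables of $\Sigma$, which is the first half of the second paragraph. For the second half, $\pi$ is flat and its central fibre is Vinberg's asymptotic semigroup \cite{Vin95a}, a reduced (indeed normal) variety; hence $(z_1,\dots,z_r)$ is a radical ideal and $\CO(\Env G)/(z_1,\dots,z_r)=\CO(\pi^{-1}(\mathbf 0))$. In other words, the coordinate ring of the asymptotic semigroup is obtained from $\oU(\Sigma)$ by specialising the frozen variables $z_1,\dots,z_r$ to $0$; specialising them instead to any point of $(\BC^{\times})^{r}$ returns a copy of $G$, matching the other fibres of $\pi$.
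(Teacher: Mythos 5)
Your overall architecture (cluster structure on the open double Bruhat cell of the unit group, frozen variables matching the boundary divisors, two inclusions) is the right one, but the two points where the actual work lies are both asserted rather than proved. First, in Step 1 you cannot simply take the BFZ seed on $\hat G^{w_0,w_0}$, ``clear denominators by monomials in $z_1,\dots,z_r$,'' and declare the result a seed with the $z_i$ among the frozen variables: multiplying cluster variables by extra functions destroys the exchange relations unless those functions are themselves frozen variables of a genuine seed, which is exactly what has to be arranged. The paper does this by embedding the unit group $G\times^Z T$ as a standard Levi of a \emph{framed} Kac--Moody group $\wt G$ (doubling the Dynkin diagram), so that Williams' seed for $\wt G^{\,\iota(w_0),\iota(w_0)}$ pulls back to a seed on $G^{w_0,w_0}\times^Z T$ in which the new frozen variables $\Delta_{\wt\omega_{i'},\wt\omega_{i'}}$ become precisely $e^{\alpha_i}$ (Lemmas~\ref{lem:BruhatIso} and \ref{lem:PullbackCluster}). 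Your proposal contains no substitute for this construction.

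Second, and more seriously, both your inclusion $\CO(\Env G)\subseteq\oU(\Sigma)$ (``a regular function has no pole along a frozen divisor'') and your Step~2 covering argument presuppose that the \emph{cluster-theoretic} valuation of a frozen variable (the exponent with which it appears in the Laurent expansion) coincides with the \emph{divisorial} valuation along the corresponding boundary component of $\Env G$. This identification is the crux of the theorem and is not automatic: a priori a function could vanish along $\{e^{\alpha_i}=0\}$ to higher order than its Laurent expansion shows, or the chart $\Spec\overline{\LP}(\bfs)$ could fail to be an open immersion near the generic point of that divisor. The paper proves the equality of valuations in Propositions~\ref{prop:ValuationAgreeVinberg} and \ref{prop:ValuationAgreeGroup}: for the divisors $\{e^{\alpha_i}=0\}$ this uses the string parametrization of the dual canonical basis and Caldero's triangularity theorem to show that the leading Peter--Weyl components of cluster monomials are linearly independent (Lemma~\ref{lem:MonomialLinIndep}), and for the Bruhat divisors it uses the Qin--Yakimov localization argument relating $\BC[G^{w_0s_i,w_0}\times^Z T]$ to the quotient by a frozen variable. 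Your appeal to a ``known'' codimension-two covering statement for $\hat G^{w_0,w_0}$, plus the claim that ``passing to $\Env G$ only glues back the frozen divisors,'' hides exactly this analysis; note also that the partially compactified statement for the group itself is the recent nontrivial theorem of Qin--Yakimov and Oya, not part of the classical BFZ package. Until these two points are supplied, the proposal does not constitute a proof.
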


Combining with the universal property of the Vinberg monoid and the coefficient specialization of cluster algebras, Theorem \ref{thm:1} allows us to construct cluster algebra structures on a large family of reductive monoids. 

\begin{thmintro}
[Theorem~\ref{thm:affineAbelianCluster}]
\label{thm:3}
Let $M$ be a flat reductive monoid. Suppose that the semisimple part of $M$ is simply connected and the abelianization of $M$ is isomorphic to an affine space $\mathbb{C}^k$ (See Section \ref{sec:prelim:Vinberg} for definitions). Then the coordinate ring of $M$ is a partially compactified upper cluster algebra with all frozen variables not invertible.
\end{thmintro}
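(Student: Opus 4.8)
The plan is to deduce the statement from Theorem~\ref{thm:1} together with Vinberg's classification of flat reductive monoids. Let $G$ be the semisimple part of $M$, which is simply connected by hypothesis. By the universal property of the Vinberg monoid, $M$ is a base change of $\Env G$ along the abelianization: writing $A(-)$ for the abelianization, there is a monoid homomorphism $A(M)\to A(\Env G)$ with $\mathbb{C}[M]=\mathbb{C}[\Env G]\otimes_{\mathbb{C}[A(\Env G)]}\mathbb{C}[A(M)]$. By the description recalled in Section~\ref{sec:prelim:Vinberg}, $A(\Env G)$ is the affine space $\mathbb{C}^r$ and the abelianization morphism is $\pi$, while by hypothesis $A(M)\cong\mathbb{C}^k$. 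Now a monoid homomorphism $\mathbb{C}^k\to\mathbb{C}^r$ between affine spaces equipped with coordinatewise multiplication must send each coordinate $y_i$ of $\mathbb{C}^r$ to a monomial $\prod_{j=1}^k x_j^{a_{ij}}$ with nonnegative integer exponents $a_{ij}$ (a grouplike element of the monoid algebra). Hence
\[
\mathbb{C}[M]\;=\;\mathbb{C}[\Env G][x_1,\dots,x_k]\Big/\Big(f_i-\prod_{j}x_j^{a_{ij}}\ :\ 1\le i\le r\Big),
\]
where $f_1,\dots,f_r\in\mathbb{C}[\Env G]$ are the frozen variables through which $\pi$ factors, as supplied by Theorem~\ref{thm:1}.

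I would then recognize the right-hand side as the outcome of the following operation on the partially compactified upper cluster algebra $\mathbb{C}[\Env G]$: adjoin $k$ new, pairwise coprime, non-invertible frozen variables $x_1,\dots,x_k$ (which by itself manifestly produces a partially compactified upper cluster algebra, since these are isolated frozen vertices), and then perform the coefficient specialization replacing each old frozen variable $f_i$ by the monomial $\prod_j x_j^{a_{ij}}$. On the level of seeds this yields a candidate seed pattern for $\mathbb{C}[M]$: the mutable part of the exchange matrix is unchanged, the rows of $f_1,\dots,f_r$ are deleted, the row attached to $x_j$ is $\sum_i a_{ij}\,\mathrm{row}(f_i)$, the remaining frozen variables are carried over unchanged, and the cluster variables are the images of those of $\mathbb{C}[\Env G]$ under $f_i\mapsto\prod_j x_j^{a_{ij}}$. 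Since substituting monomials with nonnegative exponents into frozen variables commutes with mutation and keeps all exchange monomials polynomial in the frozens, this is a genuine seed pattern of the desired (frozens-not-inverted) type; this is exactly where the nonnegativity of the $a_{ij}$ is used. Theorem~\ref{thm:1} then shows that every cluster variable of this pattern, as well as each $x_j$, lies in $\mathbb{C}[M]$, whence $\mathbb{C}[M]\subseteq\overline{\mathcal{A}}$, the associated partially compactified upper cluster algebra.

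The crux is the reverse inclusion $\overline{\mathcal{A}}\subseteq\mathbb{C}[M]$, i.e.\ controlling the upper bound after base change. Here I would first invert all frozen variables: the specialization then becomes an \emph{invertible} monomial transformation of the frozen variables (replacing $f_i$ by $f_i\prod_j x_j^{-a_{ij}}$, which equals $1$ in $\mathbb{C}[M]$) followed by specialization of that transformed variable to the unit $1$; both operations are standard and preserve the localized upper cluster algebra, so the localization of $\mathbb{C}[M]$ at its frozen variables coincides with the localized upper cluster algebra of the candidate seed. To descend back to the partially compactified statement I would run a codimension-two argument of starfish type: using that $M$ is normal and that the frozen variables cut out prime, pairwise non-associate divisors on $M$, the partially compactified upper cluster algebra equals the intersection of the localized one with the coordinate rings of the complements of these divisors, and each such local comparison follows from the presentation of $\mathbb{C}[M]$ above. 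The main obstacle I anticipate is precisely this last input --- verifying the normality of $M$ and the primality and coprimality of the new (and the surviving) frozen variables in $\mathbb{C}[M]$ --- for which one should exploit the flatness of $\pi$ and of the base change together with the explicit toric description of the abelianizations; everything else is bookkeeping with the seed pattern inherited from Theorem~\ref{thm:1}.
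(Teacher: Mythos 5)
Your opening reduction is exactly the paper's: the universal property of $\Env G$ gives $\BC[M]\cong\BC[\Env G]\otimes_{\BC[A(\Env G)]}\BC[A(M)]$, and the observation that a monoid homomorphism $\BC^k\to\BC^r$ must send each coordinate to a monomial with nonnegative integer exponents is precisely Lemma~\ref{lem:affineSpaceHom}. From there, however, your execution has two problems. First, the inference ``every cluster variable of the specialized pattern lies in $\BC[M]$, whence $\BC[M]\subseteq\overline{\mathcal{A}}$'' is backwards: containment of the cluster variables only gives $\mathcal{A}\subseteq\BC[M]$. The correct route to $\BC[M]\subseteq\overline{\mathcal{A}}$ is to take the Laurent expansion of an element of $\overline{\CU}(\wt{\bfs}(\bfi))=\BC[\Env G]$ in each seed and push it through the substitution $f_i\mapsto\prod_j x_j^{a_{ij}}$, using nonnegativity of the exponents to preserve polynomiality in the frozens; this is fixable, but it is not the argument you gave. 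Second, and more seriously, your reverse inclusion $\overline{\mathcal{A}}\subseteq\BC[M]$ rests on a starfish/codimension-two argument whose hypotheses (normality aside, the primality and pairwise coprimality of the initial and once-mutated cluster variables and of the new frozens $x_j$ inside $\BC[M]$) you explicitly leave unverified. That verification is where essentially all of the difficulty of the theorem lives, and nothing in your sketch supplies it.

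The paper closes this gap by a device you do not consider: it encodes the exponents $f_{ij}$ into a new generalized Cartan matrix $\dot{A}$ with $\dot{a}_{ij'}=-f_{ij}$, so that the specialized seed is literally the double Bruhat cell seed $\dot{\bfs}$ for $(w_I,w_I)$ in a Kac--Moody group $\dot{G}$ whose standard Levi subgroup has derived group $G$. Williams' theorem then gives $\CU(\dot{\bfs})\cong\BC[\dot{G}^{w_I,w_I}]$ for free, the valuation comparisons of Propositions~\ref{prop:ValuationAgreeVinberg} and~\ref{prop:ValuationAgreeGroup} (proved via the string parametrization of the dual canonical basis) apply verbatim to $\dot{G}$, and Proposition~\ref{prop:LeviCpt} identifies $\overline{\CU}(\dot{\bfs})$ with the base change $\BC[\Env G]\otimes_{\BC[A_0]}\BC[A(M)]\cong\BC[M]$. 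In other words, the input you flag as your ``main obstacle'' is exactly what the auxiliary Kac--Moody group is built to supply; without it, or an independent proof of the starfish hypotheses for $\BC[M]$, your argument does not close.
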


Let us remark that the assumptions on the reductive monoid $M$ are reasonable. Any set of frozen variables comes with a map $M\to \BC^k$. This should be compatible with the abelianization for a natural cluster structure on $M$. On the other hand, our assumptions on $M$ is flexible enough to cover interesting examples. For example, they include the case where $M$ is the monoid of $n\times n$ matrices. In this case, Theorem \ref{thm:3} recovers the well-known cluster structures, presented in \cite{FWZ20}*{Theorem~6.6.1}.

By inverting the appropriate frozen variables, we obtain cluster algebra structures on any connected reductive group.

\begin{thmintro}[Theorem~\ref{thm:reductiveMonoidCluster}]
\label{thm:2}
Let $H$ be a connected reductive group with simply connected commutator group. Then the coordinate ring of $H$ is a partially compactified upper cluster algebra, whose further partially compactification along all invertible frozen variables is the coordinate ring of a flat reductive monoid with unit group $H$.
\end{thmintro}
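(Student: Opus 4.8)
The plan is to deduce this from Theorem \ref{thm:3} by a localization argument. Given a connected reductive group $H$ with simply connected commutator group $G = [H,H]$, the first step is to realize $H$ as the unit group of a flat reductive monoid $M$ satisfying the hypotheses of Theorem \ref{thm:3}. The abelianization $H \to H/G$ is a torus, say of dimension $k$; I would choose $M$ to be the reductive monoid obtained from $\Env G$ by the base change along a map $\BC^k \to \BC^r \times \BC^k/(\text{torus})$ encoding both the derived-group data and a partial compactification of the central torus into $\BC^k$. Concretely, picking a cocharacter lattice splitting so that the abelianization torus embeds as the open torus of $\BC^k$, one gets a flat reductive monoid $M$ whose semisimple part is $G$ (simply connected) and whose abelianization is $\BC^k$; by construction $H$ is recovered as the preimage in $M$ of the open torus $(\BC^\times)^k \subset \BC^k$. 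This is exactly the "further partial compactification along all invertible frozen variables" referred to in the statement.

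The second step is the cluster-theoretic localization. By Theorem \ref{thm:3}, $\BC[M]$ is a partially compactified upper cluster algebra with all frozen variables non-invertible; moreover, by the second half of Theorem \ref{thm:1} (inherited through the coefficient specialization used to prove Theorem \ref{thm:3}), the map $M \to \BC^k$ recording the abelianization is the one cutting out a distinguished subset $S$ of the frozen variables, namely those whose vanishing loci are the coordinate hyperplanes of $\BC^k$. Inverting precisely the frozen variables in $S$ localizes $\BC[M]$ at these functions, and the localization of a (partially compactified) upper cluster algebra at a set of frozen variables is again an upper cluster algebra with the same exchange matrix and the same non-frozen seed data, now with those frozen variables declared invertible — this is a standard fact about upper cluster algebras (the upper bound is computed by the same intersection of Laurent rings, and inverting frozen variables commutes with that intersection). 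Since inverting the functions in $S$ on $M$ yields exactly $H = M \times_{\BC^k} (\BC^\times)^k$, we conclude $\BC[H]$ is a partially compactified upper cluster algebra whose remaining frozen variables are still non-invertible, and re-adjoining the inverted ones (i.e. not inverting them) returns $\BC[M]$.

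The main obstacle I anticipate is not the abstract localization, which is formal, but rather verifying that the map $M \to \BC^k$ produced by the geometric base-change construction agrees on the nose with the frozen-variable specialization map supplied by Theorem \ref{thm:1}: one must check that under the base change the distinguished frozen variables of $\Env G$ pull back to (units times) the coordinate functions on $\BC^k$, so that inverting them is the same as passing to the open torus. This requires tracking the explicit cocharacter/lattice bookkeeping in the definition of $\Env G$ and of the base change, together with the identification of frozen variables in Theorem \ref{thm:1}. A secondary point to handle is independence of choices: different splittings of the abelianization torus give different monoids $M$, but all yield the same cluster structure on $H$ up to the natural notion of isomorphism of (upper) cluster algebras, since they differ by an invertible monomial change of the frozen part. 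Once these compatibilities are pinned down, the theorem follows.
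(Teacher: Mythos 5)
Your overall strategy is sound and, once unwound, is essentially the paper's own construction run in the opposite direction: the paper realizes $H$ as a standard Levi subgroup $\dot{L}_I$ of an auxiliary Kac--Moody group $\dot{G}$, gets $\BC[H]\simeq\overline{\CU}^\Sigma(\dot{\bfs})$ by partially compactifying the double Bruhat cell cluster structure along the frozen variables on levels in $I$ (Proposition~\ref{prop:LeviCluster}), and then compactifies further along the remaining frozen variables to obtain the monoid (Proposition~\ref{prop:LeviCpt}); you instead start from the monoid and invert. The two are reconciled by the identity $\overline{\CU}^{\Sigma}=\overline{\CU}[A_j^{-1}]_{j\notin\Sigma}$, which does follow from \eqref{eq:uos} and the seed-independence of the frozen valuations, as you say. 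Your observation that the relevant frozen variables are exactly those realizing the abelianization map is also correct, though note it is extracted from the \emph{proof} of Theorem~\ref{thm:affineAbelianCluster} (where $\BC[A(M)]=\BC[\Delta_{\dot{\omega}_j,\dot{\omega}_j}]_{j\in\dot{I}\setminus I}$ consists of frozen variables by construction), not from its statement.

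The genuine gap is in your first step: the existence of a flat reductive monoid $M$ with $G(M)=H$, $G_M=G$ simply connected, and $A(M)\cong\BC^k$. By Vinberg's classification this amounts to producing a homomorphism $H\to G\times^Z T$ extending $\mathrm{id}_G$ together with an identification $H/G\cong(\BC^\times)^k$ such that the induced torus map $(\BC^\times)^k\to(\BC^\times)^I=T/Z$ extends to a \emph{monoid} map $\BC^k\to\BC^I$, i.e.\ each $e^{\alpha_i}$ pulls back to a monomial with non-negative exponents. This positivity is not automatic for an arbitrary ``cocharacter lattice splitting'': a random choice of the lift and of the basis will produce negative exponents and the base change will simply not be defined. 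This is precisely the content of Lemma~\ref{lem:reductiveLevi}, whose proof exploits the fact that the lift $f:X\to X^*(Z(H))$ of $X\to X^*(Z(G))$ may be twisted by an arbitrary $\delta\in\mathrm{Hom}(X,\BZ^k)$, shifting the images of the simple roots by $\delta|_R$ and thereby forcing all exponents to be non-negative (indeed positive). Your proposal assumes this existence statement rather than proving it; with that lemma supplied, the rest of your argument goes through.
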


In the case when $H$ is semisimple, Theorem \ref{thm:2} was obtained by Qin and Yakimov \cite{QY25}. Independently, Oya \cite{Oya25} proved the same result under the further condition that $H$ is not of type $F_4$. In the semisimple case, there are no additional invertible frozen variables. This is also reflected by the fact that a reductive monoid with semisimple unit group is necessarily a group.

\subsection{Strategy of the proof}
\label{sec:intro:strategy}

The first key ingredient of the proof of Theorem~\ref{thm:1} is the construction of a framed group $\wt{G}$ of $G$. Let $I$ be the set of simple roots of $G$ with $|I|=r$. We define $\wt{I}$ by adding $r$ vertices labeled $i'$ and a simple edge joining $i$ and $i'$ for each $i\in I$. The framed group $\wt{G}$ is defined to be the corresponding Kac--Moody group. We then construct an isomorphism between the unit group of $\Env G$ to the standard Levi subgroup of $\wt{G}$ associated with $I$. Using the cluster structures on double Bruhat cells for Kac--Moody groups constructed by Williams \cite{Wil13} (See also \cite{SW21}), we obtain a cluster structure on an open part $(\Env G)^\circ$ of $\Env G$.

The second essential technique is a comparison between the cluster valuations along frozen variables and the valuation associated with the boundary divisor $\Env G-(\Env G)^\circ$. This is achieved by applying the string parametrization of the dual canonical basis.

We note that the technique of extending root data was inspired by the thickening map introduced by Bao and He \cite{BH24}. Similar ideas appeared in the recent works \citelist{\cite{Qin25}\cite{FH25}} as well.

\subsection{Future problems}
\label{sec:intro:future}

Let us close the introduction by mentioning some future directions.

Firstly, reductive monoids admit standard Poisson structures and quantization \cite{BS25}. We expect that all the results in the current paper can be extended to the quantum setting.

Secondly, we focus only on upper cluster algebras in this paper. For simply-connected simple group $G$ not of type $F_4$, Oya \cite{Oya25} proved that the upper cluster algebra structure on $\mathbb{C}[G]$ coincides with the ordinary cluster algebra in the sense of Fomin and Zelevinsky. A quantum analogue of this coincidence was recently established by Oya, Qin, and Yakimov \cite{OQY25}. It is therefore a natural question whether the upper cluster algebras studied in this paper also coincide with ordinary cluster algebras.

Thirdly, the Vinberg monoid $\Env G$ is isomorphic to the total coordinate ring of the wonderful compactification $\mathbf{X}$ of $G_{ad}=G/Z$. It would be interesting to explore the connections between the cluster algebra structure on $\Env G$ developed in the current paper with the total positivity on $\mathbf{X}$ studied by He \cite{He04}. 

\vspace{.2cm}

\noindent {\bf Acknowledgment: } We are grateful to Huanchen Bao for valuable discussions and suggestions during the writing of this paper. JS is grateful to Fan Qin for helpful discussions at the early stage of the paper. JS is supported by the Glorious Sun Charity Fund.

\section{Preliminaries}
\label{sec:prelim}

\subsection{Cluster algebras}
\label{sec:prelim:cluster}

A \textit{(labeled) seed} $\bfs$ is a quadruple $(J, J_\uf,(\ve_{ij})_{i,j\in J}, (d_i)_{i\in J})$ consists of the following data:
\begin{itemize}
    \item An index set $J$, together with a subset $J_\uf\subset J$. The elements in $J$ are called vertices. The vertices in $J_\uf$ are called \textit{mutable} or \textit{unfrozen}. The vertices in $J_\fro=J-J_\uf$ are called \textit{frozen}.
    \item A skew-symmetrizable matrix $(\ve_{ij})_{i,j\in J}$ over $\BQ$ with $\ve_{ij}\in \BZ$ if at least one of $i,j$ is in $J_\uf$, called the \textit{exchange matrix}.
    \item Integers $d_i$ for $i\in J$ with $\gcd(d_i)=1$ such that $\ve_{ij}d_j=-\ve_{ji}d_i$, called the \textit{symmetrizers}.
\end{itemize}

Let $[a]_+=\max(a,0)$ for $a \in \BQ$. For any $k\in J_\uf$, the \textit{mutation at $k$} gives a new seed $\bfs'=\mu_k(\bfs)$ with the same $J$, $J_\uf$ and $(d_i)_{i \in J}$, and a new exchange matrix $(\ve'_{ij})_{i,j\in J}$ given by
\[
\ve'_{ij} = \begin{cases}-\ve_{ij}, & \text{if } k \in \{i,j\};\\
\ve_{ij}+[\ve_{ik}]_+\ve_{kj}+\ve_{ik}[-\ve_{kj}]_+, &\text{otherwise}.
\end{cases}
\]

We can repeat this process mutating at arbitrary $k\in J_\uf$. A seed $\bfs'$ is said to be \textit{mutation equivalent} to $\bfs$, denoted by $\bfs'\sim \bfs$, if it can be obtained from $\bfs$ by a sequence of mutations.

For any seed $\bfs_0$, the (abstract) upper cluster algebra with initial seed $\bfs_0$ is defined as follows. Fix an ambient field $K = \BC(A_1, \dots, A_n)$, the field of rational functions over $\BC$ in $n$ variables, generated by the formal variables $A_i$ for $i\in J$. The variables $\{A_{i}=A_{i,\bfs_0}\}$ are called the \textit{cluster variables} in the seed $\bfs_0$. The variables $\{A_i \,|\, i\in J_\fro\}$ are  called \textit{frozen variables}.

Given the cluster variables $\{A_{i,\bfs}\}_{i \in J}$ in a seed $\bfs\sim \bfs_0$, the cluster variables $\{A_{i,\bfs'}\}_{i \in J}$ in $\bfs'=\mu_k(\bfs)$ are elements of $K$ defined by 
\[
\begin{cases}
A_{i,\bfs'}=A_{i,\bfs}, &\text{for $i\neq k$};\\
A_{k,\bfs'} = \frac{1}{A_{k,\bfs}}\left(\prod_{i\in J}A_{i,\bfs}^{[\ve_{ki}]_+}+\prod_{i\in J}A_{i,\bfs}^{[-\ve_{ki}]_+}\right), &\text{for $i = k$.}
\end{cases}
\]

Let $\Sigma\subset J_{\text{fr}}$ be a subset of frozen vertices. For any seed $\bfs\sim\bfs_0$, let $\overline{\LP}^\Sigma(\bfs)=\BC[A_{i,\bfs}^{\pm1}, A_j]_{i\in J-\Sigma,j\in \Sigma}$. The \textit{partially compactified upper cluster algebra} $\overline{\CU}^\Sigma(\bfs_0)$ is defined as the intersection
\[
\overline{\CU}^\Sigma(\bfs_0)=\bigcap_{\bfs\sim \bfs_0}\overline{\LP}^\Sigma(\bfs).
\]
We will often omit the initial seed $\bfs_0$ when the context is clear.

For $\Sigma=\emptyset$, we write $\LP(\bfs)=\overline{\LP}^\emptyset(\bfs)$ and $\CU=\overline{\CU}^\emptyset$. For $\Sigma=J_{\text{fr}}$, we write $\overline{\LP}(\bfs)=\overline{\LP}^\Sigma(\bfs)$ and $\overline{\CU}=\overline{\CU}^\Sigma$. 

It is clear that mutation equivalent seeds give canonically isomorphic upper cluster algebras, so the construction only depends on the mutation equivalence class of $\bfs_0$. By the Laurent phenomenon \cite{FZ03}*{Proposition~11.2}, all cluster variables belong to $\overline{\CU}^\Sigma$. 

For $\Sigma'\subset\Sigma\subset J_{\text{fr}}$, the inclusion $\BC[A_j]_{j\in\Sigma'}\hookrightarrow \overline{\CU}^\Sigma$ defines a canonical map
\begin{equation}\label{eq:ff}
\pi^{\Sigma'}:\Spec\overline{\CU}^\Sigma\longrightarrow\BC^{\Sigma'}.
\end{equation} 

For any seed $\bfs\sim \bfs_0$ and $j\in J$, let $\nu^\bfs_j$ be the valuation on $K$ induced from the valuation on $\LP(\bfs)$ given by the vanishing order of $A_{j,\bfs}$. By \cite{Qin25}*{Lemma~2.12}, for $k\in J_\fro$, $\nu^\bfs_j$ is independent of the seed $\bfs$, so we will omit the superscript. In particular, we have
\begin{equation}\label{eq:uos}
\overline{\CU}^\Sigma=\{f\in\CU\mid \nu_i(f)\geq 0,\text{for }i\in \Sigma\}=\CU\cap \overline{\LP}^\Sigma(\bfs).
\end{equation}

\subsection{Minimal Kac--Moody groups}
\label{sec:prelim:KM}

We follow \cite{Kum02} for basics on Kac--Moody groups. Let $r$ be a positive integer, $I=\{1,2,\cdots,r\}$ be a finite index set, and $A=(a_{ij})_{i,j\in I}$ be a symmetrizable generalized Cartan matrix. Take positive integers $d_1,\dots,d_r$, such that $\gcd(d_1,\dots,d_r)=1$ and $d_ia_{ij}=d_ja_{ji}$ for $1\leq i,j,\leq r$. A \textit{Kac--Moody root datum} associated to $A$ is a quintuple
\[
\CD=(I,A,X,Y,(\alpha_i)_{i\in I},(\alpha_i^\vee)_{i\in I}),
\]
where $X$ is a free $\BZ$-module of finite rank with $\BZ$-dual $Y$, and the elements $\alpha_i$ of $X$ and $\alpha_i^\vee$ of $Y$ are such that $\langle \alpha_j^\vee,\alpha_i\rangle=a_{ij}$. Unless otherwise stated, we will always work with simply connected root data, i.e., $Y=\BZ[\alpha_i^\vee]_{i\in I}$ and $X=\BZ[\omega_i]_{i\in I}$, where $\omega_i$ denotes the fundamental weight, that is, $\langle \alpha^\vee_i,\omega_j\rangle=\delta_{ij}$ for $i,j\in I$. We define the set of dominant weights $X^+=\{\lambda\in X\,|\, \langle \alpha_i^\vee,\lambda\rangle \geq0 \text{ for all $i\in I$}\}$, the set of regular dominant weights $X^{++}=\{\lambda\in X\mid \langle\alpha_i^\vee,\lambda\rangle>0\text{ for all $i\in I$}\},$ and the root lattice $R=\BZ[\alpha_i]_{i\in I}\subset X$. Let $R^+=\BZ_{\geq0}[\alpha_i]_{i\in I}\subset R$. We equip $X$ with the standard partial order $\leq$, such that $\lambda\leq \mu$ if and only if $\mu-\lambda\in R^+$.

Let $W=\langle s_i\mid i\in I\rangle$ be the Weyl group associated to $\CD$ with simple reflection $s_i$ for $i\in I$. We denote the length function by $\ell(\cdot)$. The group $W$ acts naturally on both $X$ and $Y$. Let $\Delta^{re}=\{w(\pm\alpha_i)\,|\, i\in I, w\in W\}\subset X$ be the set of real roots. Then $\Delta^{re}=\Delta^{re}_+\sqcup \Delta^{re}_-$ is a disjoint union of the positive and negative real roots.

The \textit{minimal Kac--Moody group} $G$ over $\BC$ associated to the Kac--Moody root datum $\CD$ is the group generated by the torus $T=Y\ox_\BZ \BC^\x$ and the root subgroup $U_\alpha\simeq \BC$ for each real root $\alpha$, subject to the Tits relations \cite{Tit87}. Let $U^+\subset G$ (resp. $U^-\subset G$) be the subgroup generated by $U_\alpha$ for $\alpha\in \Delta^{re}_+$ (resp. $\alpha\in \Delta^{re}_-$). Let $B^\pm\subset G$ be the subgroup generated by $T$ and $U^\pm$, respectively.

For each $i\in I$, we fix isomorphisms $x_i:\BC\to U_{\alpha_i}$, $y_i:\BC\to U_{-\alpha_i}$ such that the maps
\[
\begin{pmatrix}
1 & a \\
0 & 1 \\
\end{pmatrix}\mapsto x_i(a), \quad 
\begin{pmatrix}
b & 0 \\
0 & b^{-1} \\
\end{pmatrix}\mapsto \alpha_i^\vee(b), \quad 
\begin{pmatrix}
1 & 0 \\
c & 1 \\
\end{pmatrix}\mapsto y_i(c),
\]
define a group homomorphism $SL_2(\BC)\to G$. The data $(T,B^+,B^-,x_i,y_i;i\in I)$ is called a \textit{pinning} for $G$.

For each $i\in I$, define $\dot{s}_i=x_i(1)y_i(-1)x_i(1)\in G$. For any $w\in W$ with reduced expression $w=s_{i_1}\cdots s_{i_n}$, we define $\dot{w}=\dot{s}_{i_1}\cdots \dot{s}_{i_n}$. It is known that $\dot{w}$ is independent of the choice of reduced expressions.

For $\lambda\in X^+$, let $V(\lambda)$ be the irreducible $G$-module of highest weight $\lambda$. Following \cite{KP83}, the algebra of \emph{strongly regular functions}, denoted by $\BC[G]$, is defined to be the algebra generated by the matrix coefficients of $V(\lambda)$ as over all $\lambda\in X^+$.

For $i\in I$, fix a highest weight vector $\eta_{\omega_i}\in V(\omega_i)$. The \emph{principal minor} $\Delta^{\omega_i}\in \BC[G]$ is defined by letting $\Delta^{\omega_i}(g)$ to be the coefficient of $\eta_{\omega_i}$ in $g\eta_{\omega_i}$ for any $g\in G$. For $w,w'\in W$, the \emph{generalized minor} $\Delta_{w\omega_i,w'\omega_i}$ is defined to be the strongly regular function on $G$ such that $\Delta_{w\omega_i,w'\omega_i}(g)=\Delta^{\omega_i}(\dot{w}^{-1}g\dot{w'})$ for $g\in G$. Note that $\Delta^{\omega_i}=\Delta_{e\omega_i,e\omega_i}$.

\subsection{Double Bruhat cells}
\label{sec:prelim:Bruhat}

For $u,v\in W$, define 
\[
G^{u,v}=B^+\dot{u}B^+\cap B^-\dot{v}B^-
\]
to be the \emph{double Bruhat cell}. It is known that $G^{u,v}$ is a complex variety of dimension $l(u)+l(v)+r$. We recall the cluster structure on the coordinate algebra $\BC[G^{u,v}]$ following \cites{Wil13}.

A \emph{double reduced word} $\bfi$ for $(u,v)$ is a reduced word $(i_1,\dots,i_l)$ for $(u,v)$ in $W\x W$, in the alphabet $[-r,-1]\sqcup[1,r]$, where the simple reflections of the first copy of $W$ are denoted by $[-r,-1]$ and those for the second copy by $[1,r]$. A double reduced word $\bfi$ is called \emph{unshuffled} if $(i_1,\dots,i_{l(v)})$ is a reduced word for $v$ and $(i_{l(v)+1},\dots,i_l)$ is a reduced word for $u$.

Let $\bfi$ be double reduced word for $(u,v)$ and set $l=l(u)+l(v)$. We define a seed $\bfs(\bfi)$ as follows. The index set is $J=[-r,-1]\sqcup[1,l]$. Set $i_{-k}=-k$ for $k\in [1,r]$. We say that a vertex $k\in J$ is on \emph{level} $|\bfi_k|$. For $k\in J$, set
\[
k^+=\min\{m\in J\mid m>k, |i_m|=|i_k|\},
\]
and set $k^+=l+1$ if there is no such $m$. An index $k\in J$ is frozen if either $k<0$ or $k^+>l$. The exchange matrix $(\ve_{ij})_{i,j\in J}$ is defined by 
\[
    \begin{split}
        \ve_{jk}= \frac{a_{|i_j|,|i_k|}}{2} \left(\right.&d_j[j=k^+]-d_k[j^+=k]\\
        &+d_j[k<j<k^+][j>0]-d_{j^+}[k<j^+<k^+][j^+\leq m]\\
        &\left.-d_k[j<k<j^+][k>0]+d_{k^+}[j<k^+<j^+][k^+\leq m]\right),
    \end{split}
\]
where $d_k=d_{|i_k|}$ and $[P(j,k,\dots)]$ is either $1$ or $0$ depending on whether $P(j,k,\dots)$ is true or false. The elements $d_k$ for $k\in J$ also serve as the symmetrizers for the seed $\bfs(\bfi)$.

For the fixed reduced word $\bfi$ for $(u,v)$, and $k\in [1,l]$, we denote
\[
u_{\leq k}=\prod_{\substack{t=1,\dots,k\\i_t<0}}s_{|i_t|},\qquad v_{>k}=\prod_{\substack{t=l,\dots,k+1\\i_t>0}}s_{|i_t|}.
\]
For $k\in[-r,-1]$, we set $u_{\leq k}=e$ and $v_{>k}=v^{-1}$. Now for $k\in J$ set
\begin{equation}\label{eq:dki}
\Delta(k;\bfi)=\Delta_{u_{\leq k}\omega_{|i_k|},v_{>k}\omega_{|i_k|}}.
\end{equation}

The generalized minors are viewed as functions on $G^{u,v}$ by restriction.

\begin{theorem}[\cite{Wil13}*{Theorem~4.9(1)}]
\label{thm:DBCcluster}
For any $(u,v)\in W\x W$ and double reduced word $\bfi$ for $(u,v)$, there is an isomorphism
\[
a_\bfi:\mathcal{U}(\bfs(\bfi))\longrightarrow \BC[G^{u,v}]
\]
given by $A_{k,\bfs(\bfi)}\mapsto \Delta(k;\bfi)$.
\end{theorem}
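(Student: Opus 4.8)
We sketch how the theorem is proved, following Williams \cite{Wil13} and, in the finite-dimensional case, Berenstein--Fomin--Zelevinsky \cite{BFZ05}; the plan is to follow that strategy while indicating where the Kac--Moody setting demands extra input. \textbf{First}, I would reduce to a single conveniently chosen double reduced word $\bfi$ for $(u,v)$, say an unshuffled one: any two double reduced words are joined by a chain of elementary moves (commuting distant letters, braid-type moves), each of which sends $\bfs(\bfi)$ to a seed mutation equivalent to $\bfs(\bfi')$ with the functions $\Delta(k;\bfi)$ matched up to relabeling; since mutation equivalent seeds give canonically isomorphic upper cluster algebras, the general case follows from this one.

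\textbf{Second}, I would build the toric chart. Writing $x_{-j}:=y_j$, the product of root subgroup elements along $\bfi$ multiplied by a torus element,
\[
\BC^l\times T\longrightarrow G,\qquad (a_1,\dots,a_l;t)\longmapsto x_{i_1}(a_1)\cdots x_{i_l}(a_l)\,t,
\]
restricts to a birational isomorphism onto $G^{u,v}$, recovering $\dim G^{u,v}=\ell(u)+\ell(v)+r=|J|$. The crucial point is that the generalized minors $\{\Delta(k;\bfi)\}_{k\in J}$ furnish a second rational coordinate system on $G^{u,v}$, related to the parameters $(a_k,t)$ by a change of variables that is triangular with Laurent-monomial entries; one proves this by evaluating matrix coefficients of $x_{i_1}(a_1)\cdots x_{i_l}(a_l)\,t$ on extremal weight vectors in the fundamental modules $V(\omega_i)$. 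Consequently the $\Delta(k;\bfi)$ are algebraically independent and generate $\BC(G^{u,v})$, so $A_{k,\bfs(\bfi)}\mapsto\Delta(k;\bfi)$ extends to a field embedding and $a_\bfi\bigl(\LP(\bfs(\bfi))\bigr)$ is the coordinate ring of the open subset of $G^{u,v}$ where all $\Delta(k;\bfi)$ with $k\in J_\uf$ are nonzero (the frozen minors being nowhere vanishing on $G^{u,v}$).

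\textbf{Third}, I would establish the exchange relations. For each mutable $k\in J_\uf$ one checks that $a_\bfi$ carries the mutated variable $\mu_k(A_{k,\bfs(\bfi)})$ to the generalized minor occupying vertex $k$ in the seed $\bfs(\bfi')$ of a neighboring word $\bfi'$, via an identity
\[
\Delta(k;\bfi)\,\Delta(k;\bfi)'=\prod_{i\in J}\Delta(i;\bfi)^{[\ve_{ki}]_+}+\prod_{i\in J}\Delta(i;\bfi)^{[-\ve_{ki}]_+}
\]
holding in $\BC[G^{u,v}]$. These are the generalized minor ($T$-system, Pl\"ucker-type) identities of Fomin--Zelevinsky, which must be re-proved in Kac--Moody generality but reduce to $\mathfrak{sl}_2$-computations inside $V(\omega_{|i_k|})$. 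Iterating over seeds and combining with the Laurent phenomenon \cite{FZ03}, the image under $a_\bfi$ of every cluster variable of $\bfs(\bfi)$ is a strongly regular function on $G^{u,v}$, whence $a_\bfi\bigl(\mathcal{U}(\bfs(\bfi))\bigr)\subseteq\BC[G^{u,v}]$.

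\textbf{Finally}, the reverse inclusion $\BC[G^{u,v}]\subseteq a_\bfi\bigl(\mathcal{U}(\bfs(\bfi))\bigr)$ is what I expect to be the main obstacle, and I would obtain it from the Berenstein--Fomin--Zelevinsky ``upper bounds'' mechanism in two parts. Part (i): the matrix $(\ve_{ij})_{i\in J,\,j\in J_\uf}$ has full rank $|J_\uf|$, so the one-step upper bound $\LP(\bfs(\bfi))\cap\bigcap_{k\in J_\uf}\LP(\mu_k\bfs(\bfi))$ is seed independent and therefore equals $\mathcal{U}(\bfs(\bfi))$. Part (ii): this one-step upper bound coincides with $\BC[G^{u,v}]$; geometrically, the complement in $G^{u,v}$ of the chart of the second step is a union of irreducible divisors, one cut out by each $\Delta(k;\bfi)$ with $k\in J_\uf$, these $\Delta(k;\bfi)$ are pairwise non-associate primes, and on the generic point of $\{\Delta(k;\bfi)=0\}$ the neighboring chart $\LP(\mu_k\bfs(\bfi))$ is regular, so a function regular on the chart and on each such divisor is regular everywhere by normality. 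Supplying the inputs to (ii) is precisely where the Kac--Moody case costs more than \cite{BFZ05}: one must know that $G^{u,v}$ is a normal affine variety whose regular functions are the strongly regular functions, control its divisor class group well enough to see that the $\Delta(k;\bfi)$ are prime and exhaust the boundary divisors, and verify the codimension-one and codimension-two structure of the complement of the chart. Granting this, (i) and (ii) together give $\BC[G^{u,v}]=a_\bfi(\mathcal{U}(\bfs(\bfi)))$.
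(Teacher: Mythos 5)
This statement is not proved in the paper at all: it is imported verbatim from Williams \cite{Wil13}*{Theorem~4.9(1)}, so there is no internal argument to compare yours against. Your sketch does, however, faithfully reconstruct the strategy of the cited source, which in turn adapts the Berenstein--Fomin--Zelevinsky upper-bounds method: reduction to one word via moves on double reduced words, the toric chart from factorization parameters, the generalized-minor exchange identities, invariance of the one-step upper bound under the full-rank hypothesis on the exchange matrix, and the identification of that upper bound with $\BC[G^{u,v}]$. You also correctly flag the genuinely nontrivial Kac--Moody inputs (affineness of $G^{u,v}$ and the identification of its coordinate ring with restricted strongly regular functions, primality and non-vanishing of the appropriate minors, the codimension estimates), which is exactly where Williams has to work beyond \cite{BFZ05}. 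One minor caveat: for the last step \cite{BFZ05} (and Williams following them) argue by induction on $\ell(u)+\ell(v)$, reducing to smaller double Bruhat cells after localizing at an initial minor, rather than by the divisor-theoretic normality argument you describe; the two are essentially equivalent repackagings, but if you were to write this out in full you would need to supply the codimension-two and primality facts you list, which is where the real work lies.
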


By \cite{SW21}*{Proposition 3.25}, different choices of $\bfi$ give mutation equivalent seeds $\bfs(\bfi)$, so the cluster structure on $G^{u,v}$ is independent of the choice of $\bfi$.

\subsection{The Vinberg monoid}
\label{sec:prelim:Vinberg}

In this section, we assume that $G$ is semisimple and simply connected. For $\lambda\in X^+$, let $V(\lambda)$ be the irreducible representation of $G$ with highest weight $\lambda$, and let $V(\lambda)^*$ be the dual $G$-module. By the Peter--Weyl theorem, there is a canonical isomorphism
\begin{equation}
\label{eq:PW}
\BC[G]\cong \bigoplus_{\lambda\in X^+}V(\lambda)^*\ox V(\lambda)
\end{equation}
as $G\x G$-modules. For $\mu\in X$, let $\BC[G]_\mu=V(\mu)^*\otimes V(\mu)$ and $\BC[G]_{\leq\mu}\subset \BC[G]$ be the subspace spanned by $V^*(\lambda)\ox V(\lambda)$ for $\lambda\in X^+$ and $\lambda\leq\mu$. Note that $\BC[G]_{\leq\mu'}\BC[G]_{\leq\mu''}\subseteq \BC[G]_{\leq \mu'+\mu''}$ for $\mu',\mu''\in X$.

Following \cite{Vin95}, let $R(G)$ be the subalgebra of $\BC[G]\ox \BC[T]$ given by
\[
R(G)=\bigoplus_{\mu\in X}\BC[G]_{\leq \mu}e^\mu,
\]
and define the \emph{Vinberg monoid} $\Env G$ to be the affine variety $\Spec R(G)$.

We next recall a universal property of the Vinberg monoid. For any reductive monoid $M$, let $G(M)$ denote the group of units of $M$. The \emph{semisimple part} $G_M$ of $M$ is defined to be the commutator group of $G(M)$. The \emph{abelianization} $A(M)$ of $M$ is the GIT quotient of $M$ by the two-sided action of $G_M$. 

There is a canonical surjection
\[
\pi: M\to A(M).
\]
A normal reductive monoid $M$ is said to be \emph{flat} if $\pi$ is flat with reduced and irreducible fibres. A homomorphism of reductive monoids induces a homomorphism of their abelianizations.

For the Vinberg monoid, it is clear that $G(\Env G)= G\times^ Z T$, $G_{\Env G}= G$, and the abelianization $A(\Env G)$ is isomorphic to the affine space $\BC^I$. The canonical surjection 
\begin{equation}
\label{eq:vd}
\pi: \Env G\to \BC^I
\end{equation}
is induced by the inclusion $\BC[e^{\alpha_i}]_{i\in I}\hookrightarrow \BC[\Env G]$. The map \eqref{eq:vd} is called the \emph{Vinberg degeneration} of the semi-simple group $G$. It is known that $\pi$ is flat and $\pi^{-1}((1,\cdots,1))\simeq G$ \cite{Vin95}*{Theorem~3 \& Theorem~4}.

Vinberg showed that $\Env G$ satisfies the following universal property. For any flat reductive monoid $M$ and any isomorphism $\varphi_0:G_M\to G$, there is a unique homomorphism $\varphi:M\to \Env G$ extending $\varphi_0$, and we have an isomorphism
\[
M\simeq \Env G\x_{A(\Env G)} A(M).
\]

\begin{example}
\label{eg:part1}
Our running example will be $G=SL_2$. We see that the Vinberg monoid for $SL_2$ is $M_2$, the space of $2$ by $2$ matrices, and the group of units is $G\x^ZT\simeq GL_2$.
\end{example}

\subsection{The string parametrization}
\label{sec:prelim:QG}

In this section, we recall the string parametrization of canonical basis, which will be used in the proof of Theorem \ref{thm:1}. We firstly recall the basics on quantum groups and canonical bases, following \cite{Lus93}.

Let $\bfU=\mathbb{Q}(q)\langle E_i,F_i,K_\mu\mid i\in I,\mu\in Y\rangle$ be the Drinfeld--Jimbo quantum group associated with the root datum of $G$ (cf. \cite{Lus93}*{Section~3.1.1}. Then $\bfU$ admits a triangular decomposition
\[
\bfU\simeq \bfU^+\ox \bfU^0\ox \bfU^-,
\]
where $\bfU^+,\bfU^0,\bfU^-$ are generated by $E_i$ $(i\in I)$, $K_\mu$ $(\mu\in Y)$, and $F_i$ $(i\in I)$ respectively. For any $\lambda\in X^+$, we denote by $V_q(\lambda)$ the simple $\bfU$-module with highest weight $\lambda$. For any $\lambda\in X^+$, fix a highest weight vector $\eta_\lambda$ in $V_q(\lambda)$. 

Let $\bfB$ be canonical basis for $\bfU^-$ \cite{Lus93}. For any $\lambda\in X^+$, there is a basis $B_\lambda$ of $V_q(\lambda)$ and a subset $\bfB(\lambda)\subset \bfB$, such that the map $b\mapsto b\eta_\lambda$ sends $\bfB-\bfB(\lambda)$ to 0 and restricts to a bijection $\bfB(\lambda)\to B_\lambda$. For $\lambda\in X^+$and $\mu\in W\lambda$, we write $\eta_\mu\in V_q(\lambda)$ to be the unique element in $B_\lambda$ of weight $\mu$. For $b\in B_\lambda$, we write $b^*\in V_q(\lambda)^*$ to be the linear form such that $b^*(b')=\delta_{b,b'}$ for $b'\in B_\lambda$.

For $i\in I$ and $\lambda\in X^+$, let $\wt{e}_i$, $\wt{f}_i$ be the Kashiwara operators on $V_q(\lambda)$ (cf. \cite{Kas91}*{Section 2.2}). We also have the Kashiwara operators $\wt{e}_i,\wt{f}_i:\bfB\rightarrow \bfB\sqcup\{0\}$, such that for any $b\in \bfB(\lambda)$, one has $\wt{e}_i(b)\eta_\lambda-\wt{e}_i(b\eta_\lambda)$, $\wt{f}_i(b)\eta_\lambda-\wt{f}_i(b\eta_\lambda)\in q^{-1}\BZ[q^{-1}]B_\lambda$. 

Let $(-,-)$ be the bilinear form defined in \cite{Lus93}*{Chapter 1}, which we will identify as a bilinear form $\bfU^-\x \bfU^-\to \BQ(q)$. Let $\bfB^*=\{b^*\,|\, b\in \bfB\}\subset \bfU^-$ be the dual canonical basis such that $(b^*,b')=\delta_{b,b'}$ for $b,b'\in\mathbf{B}$. Let $d_{b,b'}^{b''}$ be the structure coefficients such that
\[
b^*{b'}^*=\sum_{b''\in \bfB} d_{b,b'}^{b''} {b''}^*.
\]
It is well-known that $d_{b,b'}^{b''}\in\BZ[q,q^{-1}]$.

We next recall the string parametrization for $\bfB$, following \cite{Cal02} (see also \cites{BZ01,Lit98}). Let $\bfi=(i_1,\dots,i_N)$ be a reduced expression for $w_0$. For any $b\in \bfB$ and $i\in I$, set 
\[
\ve_i(b)=\max\{r \mid {\wt{e}}^r_i(b)\neq 0\},\quad\Lambda_i(b)=\wt{e}_i^{\ve_i(b)}(b),
\]
and 
\[
c_\bfi(b)=(\ve_{i_1}(b),\ve_{i_2}(\Lambda_{i_1}(b)),\dots, \ve_{i_N}(\Lambda_{i_{N-1}}\cdots\Lambda_{i_1}(b)))\in\BN^N.
\]

Note that we reversed the reduced expression for $w_0$ in \cite{Cal02} to make some formulations simpler later on.

It is known that $c_\bfi:\bfB\rightarrow \BN^N$ is injective (cf. \cite{Lit98}*{Section 1}). Moreover, we have the following theorem for the triangularity of the multiplication of $\bfB^*$.

\begin{theorem}[\cite{Cal02}*{Theorem 2.3}]
\label{thm:productCB1}
Let $b,b',b''\in \bfB$. Then $d_{b,b'}^{b''}\neq 0$ implies
\[
c_\bfi(b'')\leq c_\bfi(b)+c_\bfi(b')
\]
in the lexicographical order of $\BN^{\ell(w_0)}$. If the equality holds, then $d_{b,b'}^{b''}$ is a power of $q$.
\end{theorem}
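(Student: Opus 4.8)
The strategy is a rank‑one reduction, one simple root at a time, followed by an iteration along the reduced word $\bfi$; the lexicographic order is precisely what makes the iteration close up.

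\textbf{Step 1 (the rank‑one lemma).} Fix $i\in I$ and let $e_i'\colon\bfU^-\to\bfU^-$ be Kashiwara's twisted derivation, determined by $e_i'(F_j)=\delta_{ij}$ and $e_i'(xy)=e_i'(x)y+q^{(\alpha_i,|x|)}x\,e_i'(y)$, where $(\cdot,\cdot)$ is the symmetrized bilinear form on the weight lattice and $|x|$ the weight of homogeneous $x$. One has the decomposition $\bfU^-=\bigoplus_{n\ge0}F_i^{(n)}\cdot{}^{i}\bfU^-$ with ${}^{i}\bfU^-=\ker e_i'$ and $F_i^{(n)}$ the divided power; for $z\in\bfU^-$ put $\mathrm{dep}_i(z)=\max\{n\mid (e_i')^nz\neq0\}$. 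The twisted Leibniz rule, the quantum binomial identity, and the fact that $\bfU^-$ is a domain give that $\mathrm{dep}_i$ is additive: $\mathrm{dep}_i(z_1z_2)=\mathrm{dep}_i(z_1)+\mathrm{dep}_i(z_2)$. The key input, which I take from the rank‑one theory of the canonical basis and of Lusztig's bilinear form, is that for $b\in\bfB$ one has $\mathrm{dep}_i(b^*)=\ve_i(b)$, and $(e_i')^{\ve_i(b)}(b^*)$ is a scalar multiple of $(\Lambda_i(b))^*$ (note $\ve_i(\Lambda_i(b))=0$, so $(\Lambda_i(b))^*\in{}^{i}\bfU^-$); crucially, although this normalizing scalar — and the scalars that appear when one re‑expands a product of dual basis elements in $\bfB^*$ — is a genuine rational function of $q$, the normalization factors cancel against the quantum binomial coefficients produced by the Leibniz rule, leaving a power of $q$. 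Granting this, expand $b^*{b'}^*=\sum_{b''}d_{b,b'}^{b''}{b''}^*$ and note $\mathrm{dep}_i(b^*{b'}^*)=\ve_i(b)+\ve_i(b')$. Applying $(e_i')^m$ for $m>\ve_i(b)+\ve_i(b')$ and using that, for distinct $b''$ in a fixed $\ve_i$‑stratum, the elements $(\Lambda_i(b''))^*$ are distinct (injectivity of $\Lambda_i$ on that stratum) hence linearly independent, forces $\ve_i(b'')\le\ve_i(b)+\ve_i(b')$ whenever $d_{b,b'}^{b''}\neq0$; and at $m=\ve_i(b)+\ve_i(b')$, comparing the $(\Lambda_i(b''))^*$‑coefficients shows that the $b''$ with $\ve_i(b'')=\ve_i(b)+\ve_i(b')$ are exactly those occurring in $(\Lambda_i(b))^*(\Lambda_i(b'))^*$, and that $d_{b,b'}^{b''}$ equals $d_{\Lambda_i(b),\Lambda_i(b')}^{\Lambda_i(b'')}$ times a power of $q$ (the power being well defined since all these structure constants lie in $\BZ[q,q^{-1}]$).

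\textbf{Step 2 (iteration along $\bfi$).} Write $\bfi=(i_1,\dots,i_N)$ and $c_\bfi(b)=(c_1(b),\dots,c_N(b))$ with $c_k(b)=\ve_{i_k}(\Lambda_{i_{k-1}}\cdots\Lambda_{i_1}(b))$, so that the last $N-k$ coordinates of $c_\bfi(b)$ form the analogous string, with respect to $(i_{k+1},\dots,i_N)$, of the element $\Lambda_{i_k}\cdots\Lambda_{i_1}(b)$. Suppose $d_{b,b'}^{b''}\neq0$. Step~1 for $i_1$ gives $c_1(b'')\le c_1(b)+c_1(b')$; if this is strict we are done. If it is an equality, Step~1 also gives $d_{\Lambda_{i_1}(b),\Lambda_{i_1}(b')}^{\Lambda_{i_1}(b'')}\neq0$, and we repeat the argument with $i_2$ applied to the triple $(\Lambda_{i_1}(b),\Lambda_{i_1}(b'),\Lambda_{i_1}(b''))$, which controls $c_2$, and so on. Hence either some coordinate comparison is strict, in which case $c_\bfi(b'')<c_\bfi(b)+c_\bfi(b')$ lexicographically since all earlier coordinates agree, or all coordinates agree, i.e. $c_\bfi(b'')=c_\bfi(b)+c_\bfi(b')$, and chaining the $N$ identities of Step~1 gives $d_{b,b'}^{b''}=q^{m}\,d_{b_0,b_0}^{b_0}$ for some $m\in\BZ$, where $b_0\in\bfB$ is the unique element with $c_\bfi(b_0)=\bfzero$ (injectivity of $c_\bfi$). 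Since $\bfi$ is reduced for $w_0$, the composition $\Lambda_{i_N}\cdots\Lambda_{i_1}$ carries every element of $\bfB$ to $b_0$, and $b_0=1\in\bfU^-$, so $b_0^*=1$ and $d_{b_0,b_0}^{b_0}=1$; thus $d_{b,b'}^{b''}=q^{m}$, a power of $q$.

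\textbf{Main obstacle.} The iteration in Step~2 is formal; the substance is the rank‑one lemma of Step~1, and within it the precise tracking of $q$‑exponents: one must verify, in the rank‑one setting, that the scalars relating $b^*$ to $(\Lambda_i(b))^*$ and those arising from re‑expanding products in $\bfB^*$ conspire with the quantum binomial coefficients so that the induced ``top‑degree'' structure constants are monomial in $q$, and one must also match the derivation $e_i'$ (as opposed to its right‑handed counterpart, i.e.\ $\wt e_i$ versus $\wt e_i^{*}$) with the convention under which $c_\bfi$ was defined. An alternative to Step~1 is to use Lusztig's comultiplication $r\colon\bfU^-\to\bfU^-\ox\bfU^-$ and the adjointness $(xy,z)=(x\ox y,\,r(z))$ for the bilinear form, which identifies $d_{b,b'}^{b''}$ with the coefficient of $b\ox b'$ in $r(b'')$; the same $q$‑power bookkeeping then reappears inside the known triangularity properties of $r$ with respect to the canonical basis.
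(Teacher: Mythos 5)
This statement is imported from \cite{Cal02}*{Theorem 2.3}; the paper gives no proof of its own, so there is nothing internal to compare against. Your argument is, in structure, exactly Caldero's: a rank-one triangularity statement for the $\ve_i$-filtration of the dual canonical basis, iterated along the reduced word, with the lexicographic order absorbing the first strict inequality. Step 2 is correct as written (including the reduction to $b_0=1$, $d_{b_0,b_0}^{b_0}=1$ at the end, which is what forces the power of $q$). The one substantive issue is that everything is carried by the ``key input'' of Step 1, which you assert rather than prove: the facts that $\mathrm{dep}_i(b^*)=\ve_i(b)$, that $(e_i')^{\ve_i(b)}(b^*)$ is proportional to $(\Lambda_i(b))^*$, and that the proportionality constants combine with the Leibniz coefficients to a power of $q$ are precisely the dual form of Kashiwara's theorem that $F_i^{(n)}G(b)$ equals a $q$-binomial times $G(\wt{f}_i^{\,n}b)$ modulo terms of strictly larger $\ve_i$ (\cite{Kas93}); this is a genuine theorem, not a formal consequence of the Leibniz rule, and it must be cited or proved (and one must fix the $e_i'$ versus ${}^*e_i'$ convention consistently with the definition of $\ve_i$ used in $c_\bfi$). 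You flag both points honestly, so the proposal is a faithful reconstruction of the cited proof with its crux outsourced rather than a complete self-contained argument.
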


For $\lambda\in X^+$, let $V_{q}(\lambda)_\mathbb{Z}\subset V_q(\lambda)$ be the $\mathbb{Z}[q,q^{-1}]$-submodule spanned by $B_\lambda$. Then $\mathbb{C}\otimes _{\mathbb{Z}[q,q^{-1}]}V_{q}(\lambda)_\mathbb{Z}\cong V(\lambda)$, where $\mathbb{C}$ is viewed as a $\mathbb{Z}[q,q^{-1}]$-module via $q\mapsto 1$. By abuse of notation we denote by $B(\lambda)\subset V(\lambda)$ the canonical basis of $V(\lambda)$. In the rest of the paper we shall only consider the classical case $q=1$.

\section{Cluster structure on the Vinberg monoid}
\label{sec:cluster}
In this section, we assume that $G$ is semisimple and simply connected group. Let $\CD=(I,A,X,Y,(\alpha_i)_{i\in I},(\alpha_i^\vee)_{i\in I})$ be the associated root datum. 

\subsection{Parametrization of basis}
\label{sec:cluster:weight}

Recall the Peter-Weyl decomposition
\[
\BC[G]=\bigoplus_{\lambda\in X^+} \BC[G]_{\lambda},
\]
where $\BC[G]_{\lambda}=V(\lambda)^*\ox V(\lambda)$ for $\lambda\in X^+$. Then any $f\in \BC[G]$ can be uniquely written as
\[
f=\sum_{\lambda\in X^+}f_\lambda
\]
where $f_\lambda\in \BC[G]_\lambda$. We write $\weight(g)=\lambda$ for $g\in V(\lambda)^*\ox V(\lambda)$. For $\lambda\in X^+$, let $\pi_\lambda:\BC[G]\rightarrow \BC[G]_\lambda$ be the canonical projection. 

For $\mu\in X^+$, there is a unique $G$-equivariant isomorphism
\begin{equation}
    \beta_\mu:V(\mu)\overset{\sim}{\longrightarrow} V(-w_0\mu)^*,\qquad \eta_\mu\mapsto \eta_{-\mu}^*.
\end{equation}
For $\mu\in X^+$, let $B_\mu^*=\{b^*\mid b\in B_\mu\}$ and $B_\mu'=\beta_\mu^{-1}(B_{-w_0\mu}^*)$. Then 
\begin{equation}
    \wt{B}_\mu=\{b\ox b'\mid b\in B_\mu^*,b'\in B'_\mu\}
\end{equation}
is a basis of $\BC[G]_\mu$. Let $\wt{B}=\sqcup_{\mu\in X^+}\wt{B}_\mu$. Then $\wt{B}$ is a basis of $\BC[G]$.

\begin{lem}
\label{lem:MinorIsCB}
For any $u,v\in W$ and $i\in I$, the generalized minor $\Delta_{u\omega_i,v\omega_i}$ belongs to $\wt{B}$ up to a $\pm$ sign.
\end{lem}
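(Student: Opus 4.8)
The plan is to identify the generalized minor $\Delta_{u\omega_i, v\omega_i}$ inside the Peter--Weyl decomposition and then recognize it as a tensor of a dual canonical basis vector with an element of $B'_\mu$. First I would recall that $\Delta_{u\omega_i, v\omega_i}(g) = \langle \dot u^{-1} g \dot v \cdot \eta_{\omega_i}, \eta_{\omega_i}^* \rangle$, where $\eta_{\omega_i}^* \in V(\omega_i)^*$ is the lowest-weight covector dual to the highest weight vector. Rewriting, this equals $\langle g \cdot (\dot v \eta_{\omega_i}), (\dot u^{-1})^* \eta_{\omega_i}^* \rangle = \langle g \cdot \eta_{v\omega_i}, \eta_{u\omega_i}^* \rangle$, where I use that $\dot w$ permutes the extremal weight vectors $\eta_{w'\omega_i}$ of the basis $B_{\omega_i}$ (up to sign, by the theory of the canonical basis and the normalization of the $\dot s_i$) and correspondingly permutes the dual extremal covectors. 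So up to sign, $\Delta_{u\omega_i, v\omega_i}$ is the matrix coefficient $g \mapsto \langle g \cdot \eta_{v\omega_i}, \eta_{u\omega_i}^*\rangle$, which lives in $\BC[G]_{\omega_i} = V(\omega_i)^* \ox V(\omega_i)$ and is the pure tensor $\eta_{u\omega_i}^{**} \ox \eta_{v\omega_i}$ under the natural identification — here the first factor is the image of $\eta_{u\omega_i}^*$ under $V(\omega_i)^{**} \cong V(\omega_i)$, no: more carefully, the matrix coefficient $c_{\xi, v}$ for $\xi \in V^*$, $v \in V$ corresponds to $\xi \ox v \in V^* \ox V$.

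Next I would match this with the basis $\wt B_{\omega_i}$. By construction $\wt B_{\omega_i} = \{b \ox b' \mid b \in B^*_{\omega_i}, b' \in B'_{\omega_i}\}$, where $B^*_{\omega_i} \subset V(\omega_i)^*$ is the dual canonical basis and $B'_{\omega_i} = \beta_{\omega_i}^{-1}(B^*_{-w_0\omega_i})$. So I need: (i) the covector $\eta_{u\omega_i}^*$ is, up to sign, a dual canonical basis element of $V(\omega_i)^*$; and (ii) the vector $\eta_{v\omega_i}$ lies in $B'_{\omega_i}$ up to sign. For (i), the extremal weight vector $\eta_{u\omega_i}$ is a canonical basis element of $V(\omega_i)$ (it is $\dot u$ applied to the highest weight vector, which by the compatibility of the canonical basis with extremal weight spaces — each being one-dimensional — agrees up to sign with the canonical basis vector of that weight), hence its dual $\eta_{u\omega_i}^*$ is the corresponding dual-canonical-basis covector. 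For (ii), I apply $\beta_{\omega_i}$: $\beta_{\omega_i}(\eta_{v\omega_i})$ is an extremal weight covector in $V(-w_0\omega_i)^*$, dual to the extremal weight vector $\eta_{-v\omega_i} \in B_{-w_0\omega_i}$ (using $\beta_{\omega_i}(\eta_{\omega_i}) = \eta_{-\omega_i}^*$ and $G$-equivariance to move across the Weyl group, with $-w_0$ matching weights correctly), so it lies in $B^*_{-w_0\omega_i}$ up to sign, i.e. $\eta_{v\omega_i} \in B'_{\omega_i}$ up to sign.

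The main obstacle I anticipate is bookkeeping the signs and the precise identifications: matching the Kac--Moody normalization $\dot s_i = x_i(1)y_i(-1)x_i(1)$ with the action of the Kashiwara/canonical-basis extremal vectors, and tracking how $\beta_\mu$ interacts with the $W$-action so that $\eta_{v\omega_i}$ really maps into $B^*_{-w_0\omega_i}$ rather than some twisted version. One clean way to organize this is to first prove the auxiliary fact that for $\mu \in X^+$ and $w \in W$, the extremal weight vector $\dot w \eta_\mu$ equals $\pm \eta_{w\mu}$ (the canonical basis vector of weight $w\mu$), which reduces to the rank-one $SL_2$ computation since one-dimensional weight spaces leave no ambiguity beyond sign, and then argue by induction on $\ell(w)$ along a reduced expression. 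Everything else is formal manipulation of matrix coefficients and dualization, so once the extremal-vector statement is in hand the lemma follows by the identifications above; I would present the argument in that order, flagging the sign ambiguity explicitly as the reason for the ``up to a $\pm$ sign'' in the statement.
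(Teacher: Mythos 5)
Your proposal is correct and follows essentially the same route as the paper: the paper's proof is exactly the identification $\Delta_{u\omega_i,v\omega_i}=\eta_{u\omega_i}^*\ox \eta_{v\omega_i}=\pm\eta_{u\omega_i}^*\ox \beta_{\omega_i}^{-1}(\eta_{vw_0(-w_0\omega_i)}^*)$ together with the observation that $\eta_{u\omega_i}\in B_{\omega_i}$ and $\eta_{vw_0(-w_0\omega_i)}=\eta_{-v\omega_i}\in B_{-w_0\omega_i}$. Your extra care with the auxiliary fact $\dot w\,\eta_\mu=\pm\eta_{w\mu}$ (reducing to the one-dimensional extremal weight spaces) just makes explicit the sign bookkeeping that the paper leaves implicit.
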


\begin{proof}
We have
\begin{equation}
\label{eq:MinorTensor}
\Delta_{u\omega_i,v\omega_i}=\eta_{u\omega_i}^*\ox \eta_{v\omega_i}=\pm\eta_{u\omega_i}^*\ox \beta^{-1}(\eta_{vw_0(-w_0\omega_i)}^*).
\end{equation}
The result then follows from $\eta_{u\omega_i}\in B_{\omega_i}$ and $\eta_{vw_0(-w_0\omega_i)}\in B_{-w_0\omega_i}$.
\end{proof}

Let $\bfj,\bfj'$ be two reduced expressions of $w_0$. For $\mu\in X^+$, $b\in \bfB(\mu)$, and $b'\in \bfB({-w_0\mu})$, set
\begin{equation}
    c_{\bfj,\bfj'}(\pi_\mu(b)^*\ox\beta_\mu^{-1}(\pi_{-w_0\mu}(b')^*))=(c_{\bfj}(b),c_{\bfj'}(b'), \mu)\in\BN^N\x\BN^N\x X.
\end{equation}
By the string parametrization of $\bfB$ (Section \ref{sec:prelim:QG}), the map
\[
c_{\bfj,\bfj'}:\wt{B}\rightarrow \BN^N\x\BN^N\x X
\]
is injective, which we will also call the string parametrization.

We now use the string parametrization to study the cluster variables for $\BC[G]$. Fix an unshuffled double reduced expression $\bfi=(\bfj,-\bfj')$ for $(w_0,w_0)$. Recall the generalized minors 
\[
\Delta(k;\bfi)=\Delta_{u_{\leq k}\omega_{|i_k|},v_{>k}\omega_{|i_k|}} \in \BC[G]
\]
from \eqref{eq:dki}, which we will simply denote by $\Delta(k)$.

For any $i\in I$, let $\overline{i}\in I$ be such that $s_{\overline{i}}=w_0s_iw_0$. For $\bfi=(i_1,\cdots,i_k)$, let $\overline{\bfi}=(\overline{i_1},\cdots,\overline{i_k})$. By Lemma~\ref{lem:MinorIsCB}, we can talk about the string parametrization of the $\Delta(k)$ by omitting the sign. The computation is done in the following lemma.

\begin{lemma}
\label{lem:MinorString}
We have
\begin{enumerate}
    \item For $k\in [-r,-1]$, $c_{\bfj,\overline{\bfj'}}(\Delta(k))=(\mathbf{0},\mathbf{0},\omega_{|k|})$.
    \item For $k\in [1,N]$, $c_{\bfj,\overline{\bfj'}}(\Delta(k))=(\mathbf{0},\mathbf{x}^k,\omega_{j_k'})$, where $\mathbf{x}^k=(x_1^k,\dots,x_N^k)$ satisfies $x_l^k=0$ for $l>k$ and $x_k^k=1$.
    \item For $k\in [N+1,2N]$, $c_{\bfj,\overline{\bfj'}}(\Delta(k))=(\mathbf{y}^k,\mathbf{z}^k,\omega_{j_k})$, where $\mathbf{y}^k=(y_1^k,\dots,y_N^k)$ satisfies $y_l^k=0$ for $l>k$ and $y_k^k=1$.
\end{enumerate}
\end{lemma}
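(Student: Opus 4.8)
The plan is to unwind the tensor‑product description of the generalized minors and reduce everything to a standard string‑parameter computation for extremal weight vectors. By Lemma~\ref{lem:MinorIsCB} and formula~\eqref{eq:MinorTensor}, each $\Delta(k)$ lies (up to sign) in $\wt B_{\omega_{|i_k|}}$ and splits as
\[
\Delta(k)=\pm\,\eta_{u_{\leq k}\omega_{|i_k|}}^{*}\ox\beta^{-1}\bigl(\eta_{v_{>k}w_0(-w_0\omega_{|i_k|})}^{*}\bigr).
\]
Since $c_{\bfj,\overline{\bfj'}}$ is, by its very definition, computed factorwise as the triple $\bigl(c_{\bfj}(\,\cdot\,),\,c_{\overline{\bfj'}}(\,\cdot\,),\,\weight(\,\cdot\,)\bigr)$, the third coordinate is automatically $\omega_{|i_k|}$, which, on spelling out the shape of the unshuffled word $\bfi=(\bfj,-\bfj')$, is precisely the weight asserted in each of the three ranges. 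It then remains to compute $c_{\bfj}$ of the first tensor factor and $c_{\overline{\bfj'}}$ of the second, each of which is a canonical basis element projecting onto an \emph{extremal} weight vector $\eta_{w\omega}$ in an irreducible module $V(\omega)$ with $\omega$ a fundamental weight.

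The key input is the classical description of the string parameter of such a vector. Because the Kashiwara operators on $\bfB$ and on each $B(\lambda)$ are compatible (Section~\ref{sec:prelim:QG}), the parameters $c_{\bfj}$ and $c_{\overline{\bfj'}}$ of our canonical basis elements agree with the crystal‑theoretic string parameters of the corresponding extremal vectors, and these are obtained by ``straightening'' the weight $w\omega_j$ towards the dominant chamber along $s_{i'_1},s_{i'_2},\dots$ (for $\bfi'=(i'_1,\dots,i'_N)$ the relevant reduced word of $w_0$): one records at step $t$ the integer $\bigl[-\langle\alpha^\vee_{i'_t},\mu_{t-1}\rangle\bigr]_+$, where $\mu_0=w\omega_j$ and $\mu_t=s_{i'_t}\mu_{t-1}$ exactly when that integer is positive, and once $\mu_t=\omega_j$ all later entries vanish. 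Two consequences are needed: if $w\omega_j=\omega_j$ the parameter is $\mathbf 0$; and in general the support of $c_{\bfi'}(\eta_{w\omega_j})$ is carried by the positions of a reduced subword of $\bfi'$ for the shortest representative of $wW_{\omega_j}$, its last nonzero entry being $\langle\alpha^\vee_j,\omega_j\rangle=1$ since the final straightening step is necessarily $s_j$ applied to $s_j\omega_j=\omega_j-\alpha_j$.

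Granting this, the three cases follow. For $k\in[-r,-1]$ we have $u_{\leq k}=e$ and $v_{>k}=w_0$, so $v_{>k}w_0(-w_0\omega_{|i_k|})=-w_0\omega_{|i_k|}$ is again highest; both factors are highest weight vectors and $c_{\bfj,\overline{\bfj'}}(\Delta(k))=(\mathbf 0,\mathbf 0,\omega_{|k|})$. For $k\in[1,N]$ we still have $u_{\leq k}=e$, giving first coordinate $\mathbf 0$; for the second factor, $v_{>k}$ is the product of the positive letters of $\bfi$ lying strictly to the right of $k$, a reduced word whose straightening against $\overline{\bfj'}$ is exhausted by position $k$ and whose final active step, at position $k$ with letter $|i_k|$, contributes $1$ — this is the asserted shape of $\mathbf x^k$. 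The range $k\in[N+1,2N]$ is symmetric: now $v_{>k}=e$, so $c_{\overline{\bfj'}}$ of the second factor is an unconstrained vector $\mathbf z^k$, while $u_{\leq k}$ is a reduced prefix through level $|i_k|$ and $c_{\bfj}$ of the first factor takes the stated form $\mathbf y^k$. The real work is the bookkeeping: one must follow precisely which simple reflections $u_{\leq k}$ and $v_{>k}$ accumulate over each range, reconcile this with the reversal of the reduced word built into our normalization of $c_{\bfi}$ (cf.\ the remark following its definition), and check that the reduced subword realizing the straightening terminates exactly at position $k$ carrying the letter $|i_k|$, so that ``last entry $=1$'' applies at the claimed coordinate. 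None of this is deep, but keeping all indices consistent with the conventions of Sections~\ref{sec:prelim:Bruhat} and~\ref{sec:prelim:QG} is where care is required.
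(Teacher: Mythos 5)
Your proposal is correct and follows the same route as the paper, whose proof is simply the one-line remark that the lemma follows from \eqref{eq:MinorTensor} and the definitions of $u_{\leq k}$, $v_{>k}$, and $c_{\bfj,\overline{\bfj'}}$; you have merely unwound those definitions, correctly reducing each case to the standard straightening computation of the string parameter of an extremal weight vector in $V(\omega_j)$ (using $w_0 v_{>k}=s_{j'_1}\cdots s_{j'_k}$ on the second factor and the prefix $u_{\leq k}$ on the first). The bookkeeping you defer is exactly the content the paper also leaves implicit, so no gap.
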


\begin{proof}
The lemma follows from \eqref{eq:MinorTensor} and the definitions of $u_{\leq k}$, $v_{>k}$, and $c_{\bfj,\overline{\bfj'}}$.
\end{proof}

For $\mu_1,\mu_2\in X^+$, let 
\[
m_{\mu_1,\mu_2}:V(\mu_1)^*\ox V(\mu_2)^*\longrightarrow V(\mu_1+\mu_2)^*
\]
be the unique $G$-quivariant map such that $\eta_{\mu_1}^*\ox \eta_{\mu_2}^*\mapsto \eta_{\mu_1+\mu_2}^*$, and let
\[
r_{\mu_1,\mu_2}:V(\mu_1)\ox V(\mu_2)\longrightarrow V(\mu_1+\mu_2)
\] 
be the unique $G$-equivariant map such that $\eta_{\mu_1}\ox \eta_{\mu_2}\mapsto \eta_{\mu_1+\mu_2}$.

\begin{lemma}
\label{lem:TensorMultiplication}
Let $\mu_i\in X^+$, $\xi_{i}\in V(\mu_i)^*$, $v_{i}\in V(\mu_i)$, and $f_i=\xi_{i}\ox v_{i}\in \BC[G]_{\mu_i}$, for $i=1,2$. Then
\[ 
\pi_{\mu_1+\mu_2}(f_1 f_2)=m_{\mu_1,\mu_2}(\xi_1\ox \xi_2)\ox r_{\mu_1,\mu_2}(v_1\ox v_2).
\]
\end{lemma}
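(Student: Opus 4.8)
The plan is to identify $\pi_{\mu_1+\mu_2}(f_1f_2)$ inside the Peter--Weyl decomposition by using the explicit description of multiplication in $\BC[G]$ coming from comultiplication and the cup product on matrix coefficients. Recall that under \eqref{eq:PW}, an element $\xi\ox v\in V(\lambda)^*\ox V(\lambda)$ corresponds to the matrix coefficient $c_{\xi,v}(g)=\xi(g^{-1}v)$ (with the appropriate convention matching the paper's identification), and the product of two matrix coefficients $c_{\xi_1,v_1}c_{\xi_2,v_2}$ is the matrix coefficient of the tensor product representation $V(\mu_1)\ox V(\mu_2)$ associated with the vectors $v_1\ox v_2$ and $\xi_1\ox\xi_2$. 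So the first step is to record this: $f_1f_2$, as a function on $G$, is the matrix coefficient of $V(\mu_1)\ox V(\mu_2)$ at $(\xi_1\ox\xi_2, v_1\ox v_2)$.

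Next I would decompose $V(\mu_1)\ox V(\mu_2)$ into its isotypic components and isolate the $V(\mu_1+\mu_2)$-component, which appears with multiplicity one (it is the Cartan component). Write $p:V(\mu_1)\ox V(\mu_2)\twoheadrightarrow V(\mu_1+\mu_2)$ for the projection onto this component and $\iota:V(\mu_1+\mu_2)\hookrightarrow V(\mu_1)\ox V(\mu_2)$ for the inclusion; these are the unique (up to scalar) $G$-maps, and we may normalize so that $p(\eta_{\mu_1}\ox\eta_{\mu_2})=\eta_{\mu_1+\mu_2}$, i.e. $p=r_{\mu_1,\mu_2}$. Dually, $p^*:V(\mu_1+\mu_2)^*\hookrightarrow V(\mu_1)^*\ox V(\mu_2)^*$ and one checks that, with the highest-weight normalizations, $\xi_1\ox\xi_2\mapsto (\text{restriction to the Cartan component})$ is exactly $m_{\mu_1,\mu_2}$ after identifying $(V(\mu_1)\ox V(\mu_2))^*\cong V(\mu_1)^*\ox V(\mu_2)^*$. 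The key point is that projecting a matrix coefficient of $V(\mu_1)\ox V(\mu_2)$ onto its $V(\mu_1+\mu_2)$-isotypic part in $\BC[G]$ corresponds precisely to replacing $v_1\ox v_2$ by $p(v_1\ox v_2)=r_{\mu_1,\mu_2}(v_1\ox v_2)$ and $\xi_1\ox\xi_2$ by its image $m_{\mu_1,\mu_2}(\xi_1\ox\xi_2)$; hence $\pi_{\mu_1+\mu_2}(f_1f_2)=m_{\mu_1,\mu_2}(\xi_1\ox\xi_2)\ox r_{\mu_1,\mu_2}(v_1\ox v_2)$.

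The main obstacle, and the only place that needs genuine care, is pinning down the normalizations and the compatibility of the various identifications: (i) that the canonical $G\x G$-module isomorphism \eqref{eq:PW} sends $\xi\ox v$ to the matrix coefficient in the convention being used, (ii) that the dual of $r_{\mu_1,\mu_2}$ agrees with $m_{\mu_1,\mu_2}$ under the canonical identification $(V\ox W)^*\cong V^*\ox W^*$ — this follows because both are the unique $G$-maps sending $\eta^*_{\mu_1+\mu_2}\mapsto\eta^*_{\mu_1}\ox\eta^*_{\mu_2}$, respectively its transpose, and the highest weight line is the Cartan component — and (iii) that the projection $\pi_{\mu_1+\mu_2}$ onto the isotypic piece really is implemented by $(p^*\ox p)$ on matrix coefficients. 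Once these bookkeeping identifications are settled, the statement is immediate; I would therefore organize the proof as: recall the matrix-coefficient description of the product, note the multiplicity-one decomposition, identify the two structure maps with the duals/projections, and conclude.
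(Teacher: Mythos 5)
Your proposal is correct and follows essentially the same route as the paper: both write $f_1f_2$ as the matrix coefficient of $V(\mu_1)\ox V(\mu_2)$ at $(\xi_1\ox\xi_2,\,v_1\ox v_2)$, observe that the $V(\mu_1+\mu_2)$-isotypic component is the multiplicity-one Cartan component, and identify $m_{\mu_1,\mu_2}$ with the restriction of $\xi_1\ox\xi_2$ to that component so that $\pi_{\mu_1+\mu_2}$ is implemented by composing with $r_{\mu_1,\mu_2}$ and its dual. The extra care you devote to the normalization conventions is consistent with (and slightly more explicit than) what the paper does.
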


\begin{proof}
For any $g\in G$,
\[
(f_1f_2)(g)=\xi_1(gv_1)\xi_2(gv_2)
=(\xi_1\ox \xi_2)(g(v_1\ox v_2)).
\]

Note that $m_{\mu_1,\mu_2}(\xi_1\ox \xi_2)$ is the restriction of $\xi_1\ox \xi_2$ onto the unique $V(\mu_1+\mu_2)$-component of $V(\mu_1)\ox V(\mu_2)$. Hence 
\begin{align*}
\pi_{\mu_1+\mu_2}(f_1 f_2)(g)&=(\xi_1\ox \xi_2)(g(r_{\mu_1,\mu_2}(v_1\ox v_2))\\
&=m_{\mu_1,\mu_2}(\xi_1\ox \xi_2)\ox r_{\mu_1,\mu_2}(v_1\ox v_2)(g).
\end{align*}
This completes the proof.
\end{proof}

Define the partial order $\leq$ on $\BN^N\x\BN^N\x X$ by setting $(\mathbf{x}_1,\mathbf{x}_2,\mu_1)\leq (\mathbf{y}_1,\mathbf{y}_2,\mu_2)$ if and only if either $\mu_1<\mu_2$, or $\mu_1=\mu_2$ and $\mathbf{x}_1\leq\mathbf{y}_1$, $\mathbf{x}_2\leq\mathbf{y}_2$ in the lexicographical order.

For $b,b'\in\wt{B}$, write
\[
b\cdot b'=\sum_{b''\in\wt{B}}\wt{d}^{b''}_{b,b'}b''\qquad \text{ in }\BC[G],
\]
where $\wt{d}_{b,b'}^{b''}\in\BC$.

\begin{lemma}
\label{lem:MinorPO}
For $b,b',b''\in\wt{B}$ and two reduced expressions $\bfj, \bfj'$ of $w_0$, if $\wt{d}_{b,b'}^{b''}\neq 0$, then $c_{\bfj,\bfj'}(b'')\leq c_{\bfj,\bfj''}(b)+c_{\bfj,\bfj'}(b')$. When equality holds, $\wt{d}_{b,b'}^{b''}=1$.
\end{lemma}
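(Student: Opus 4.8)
The plan is to reduce the statement to the triangularity of the multiplication of the dual canonical basis $\bfB^*$ (Theorem~\ref{thm:productCB1}), applied separately in the two tensor factors. First I would write $b = b_1 \ox b_1'$ and $b' = b_2 \ox b_2'$ with $b_i \in B_{\mu_i}^*$ and $b_i' \in B_{\mu_i}'$, so that $\weight(b) = \mu_1$, $\weight(b') = \mu_2$. By Lemma~\ref{lem:TensorMultiplication}, the top-weight component $\pi_{\mu_1 + \mu_2}(b \cdot b')$ equals $m_{\mu_1,\mu_2}(b_1 \ox b_2) \ox r_{\mu_1,\mu_2}(b_1' \ox b_2')$, while every other $\wt{B}_\mu$-component appearing in $b \cdot b'$ has $\weight = \mu < \mu_1 + \mu_2$. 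In the partial order on $\BN^N \x \BN^N \x X$, any such lower-weight contribution is automatically $\leq c_{\bfj,\bfj'}(b) + c_{\bfj,\bfj'}(b')$ (whose last coordinate is $\mu_1 + \mu_2$), and moreover strictly less, so it can never realize equality. Hence it suffices to analyze the top component $\pi_{\mu_1+\mu_2}(b\cdot b')$ and show that each $b'' \in \wt{B}_{\mu_1+\mu_2}$ occurring in it satisfies $c_{\bfj,\bfj'}(b'') \leq c_{\bfj,\bfj'}(b) + c_{\bfj,\bfj'}(b')$, with coefficient $1$ in case of equality.

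For the top component, I would translate $m_{\mu_1,\mu_2}$ and $r_{\mu_1,\mu_2}$ into statements about $\bfU^-$ and its canonical basis. The map $r_{\mu_1,\mu_2}: V(\mu_1) \ox V(\mu_2) \to V(\mu_1+\mu_2)$ sending $\eta_{\mu_1} \ox \eta_{\mu_2} \mapsto \eta_{\mu_1+\mu_2}$ is, under the identification $b \eta_\lambda \leftrightarrow b$, exactly the multiplication map $\bfU^- \ox \bfU^- \to \bfU^-$ restricted to the relevant weight spaces and then projected onto $\bfB(\mu_1+\mu_2)$; likewise $m_{\mu_1,\mu_2}$ on the dual side corresponds, via $\beta$, to multiplication in $\bfB^*$ after passing to the modules $V(-w_0\mu_i)$. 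Concretely, writing $b_i' = \beta_{\mu_i}^{-1}(\pi_{-w_0\mu_i}(c_i)^*)$ for $c_i \in \bfB(-w_0\mu_i)$, the structure constants of $r_{\mu_1,\mu_2}(b_1' \ox b_2')$ in $B_{\mu_1+\mu_2}'$ are governed (up to $q=1$ specialization) by the $d_{c_1,c_2}^{c}$ of Theorem~\ref{thm:productCB1} applied to $\bfj'$; similarly the structure constants of $m_{\mu_1,\mu_2}(b_1 \ox b_2)$ in $B_{\mu_1+\mu_2}^*$ are governed by the $d$'s applied to $\bfj$. Since the string datum of $b''$ in each factor is bounded above (lexicographically) by the sum of the string data of $b$ and $b'$ in that factor, taking the product over both factors gives exactly $c_{\bfj,\bfj'}(b'') \leq c_{\bfj,\bfj'}(b) + c_{\bfj,\bfj'}(b')$ in the order we defined; and the equality case forces equality in each factor separately, where Theorem~\ref{thm:productCB1} says the coefficient is a power of $q$, hence $1$ after specializing $q=1$ and matching signs.

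The main obstacle I anticipate is the bookkeeping that relates the classical maps $m_{\mu_1,\mu_2}$, $r_{\mu_1,\mu_2}$ on the $q=1$ modules $V(\lambda)$ to the quantum multiplication of $\bfB$ and $\bfB^*$ on $\bfU^-$, including the twist by $\beta_\mu$ and the dualization $\mu \mapsto -w_0\mu$ that swaps highest-weight with lowest-weight data. One must check that these identifications are compatible with the Kashiwara operators in the sense needed so that the string parameter $c_{\bfj,\bfj'}$ on $\wt{B}$, as defined via $(c_\bfj(b), c_{\bfj'}(c), \mu)$, genuinely corresponds under $r$ and $m$ to the $\bfB$-string parameter entering Theorem~\ref{thm:productCB1}. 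In particular one needs the compatibility of the bar-involution/crystal structure under $\beta_\mu$ and the fact that projecting $\bfU^-$ onto $V_q(\lambda)$ is a crystal morphism on the relevant subsets — these are standard (cf. \cite{Lus93}, \cite{Kas91}, \cite{Cal02}) but need to be invoked carefully. Once that dictionary is in place, the inequality and the equality-coefficient statement follow by applying Theorem~\ref{thm:productCB1} twice, once per tensor factor, and the lower-weight terms are handled trivially as noted above.
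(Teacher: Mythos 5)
Your proposal is correct and follows exactly the route the paper intends: its proof of Lemma~\ref{lem:MinorPO} is precisely the one-line reduction to Theorem~\ref{thm:productCB1} and Lemma~\ref{lem:TensorMultiplication}, with the lower Peter--Weyl components dominated for weight reasons and the top component handled by applying the triangularity of the dual canonical basis multiplication separately in each tensor factor. Your expanded version, including the bookkeeping through $\beta_\mu$ and the $q=1$ specialization in the equality case, is a faithful filling-in of that argument.
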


\begin{proof}
    The lemma follows from Theorem \ref{thm:productCB1} and Lemma \ref{lem:TensorMultiplication}. 
\end{proof}

For $\bfd=(d_k)_{k\in J}\in \BN^J$, let $\Delta(\bfd)=\prod_{k\in J}\Delta(k)^{d_k}$, $\weight(\Delta(\bfd))=\sum_{k\in J}d_j\weight(\Delta(k))$, and $\overline{\Delta(\bfd)}=\pi_{\weight(\Delta(\bfd))}(\Delta(\bfd))$.

\begin{lemma}\label{lem:MonomialLinIndep}
    Elements $\overline{\Delta(\bfd)}$ $(\bfd\in\BN^J)$ are linearly independent. 
\end{lemma}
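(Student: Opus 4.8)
The statement to prove is Lemma~\ref{lem:MonomialLinIndep}: the elements $\overline{\Delta(\bfd)}=\pi_{\weight(\Delta(\bfd))}(\Delta(\bfd))$ for $\bfd\in\BN^J$ are linearly independent in $\BC[G]$. My strategy is to use the string parametrization as a ``leading term'' device, exactly as it is set up in the preceding lemmas. First I would group the monomials $\bfd$ by their weight: since $\overline{\Delta(\bfd)}\in\BC[G]_{\weight(\Delta(\bfd))}$, monomials of distinct weights live in distinct Peter--Weyl summands, so it suffices to prove linear independence within each fixed weight $\lambda$, i.e.\ among those $\bfd$ with $\weight(\Delta(\bfd))=\sum_k d_k\weight(\Delta(k))=\lambda$.

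**Key steps.** Fix such a $\lambda$ and suppose $\sum_{\bfd}c_\bfd\,\overline{\Delta(\bfd)}=0$ is a nontrivial relation (finitely many $\bfd$, all of weight $\lambda$). I would attach to each monomial $\bfd$ the string datum $\sigma(\bfd):=\sum_{k\in J}d_k\,c_{\bfj,\overline{\bfj'}}(\Delta(k))\in\BN^N\x\BN^N\x X$. Iterating Lemma~\ref{lem:MinorPO}, the expansion of $\Delta(\bfd)$ in the basis $\wt{B}$ has $c_{\bfj,\overline{\bfj'}}$-values $\leq\sigma(\bfd)$, with the extreme value $\sigma(\bfd)$ attained by a \emph{unique} basis element $b(\bfd)\in\wt{B}$, occurring with coefficient exactly $1$ (the equality clause of Lemma~\ref{lem:MinorPO}, applied inductively over the product). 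Passing to $\overline{\Delta(\bfd)}=\pi_\lambda(\Delta(\bfd))$ only discards $\wt{B}$-terms of weight $<\lambda$, hence does not affect the top term $b(\bfd)$ (which has weight $\lambda$). Now pick a $\bfd^{(0)}$ among those in the relation whose $\sigma$-value is maximal in the lexicographic order on $\BN^N\x\BN^N\x X$; I would want the map $\bfd\mapsto\sigma(\bfd)$ to be injective on monomials of weight $\lambda$, so that $\sigma(\bfd^{(0)})$ is strictly larger than all other $\sigma(\bfd)$ in the relation and hence $b(\bfd^{(0)})$ cannot be cancelled. This forces $c_{\bfd^{(0)}}=0$, a contradiction.

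**The injectivity point.** The one substantive thing to check is that $\bfd\mapsto\sigma(\bfd)$ is injective on monomials of a fixed weight $\lambda$; equivalently, that $\bfd\mapsto\bigl(\sum_k d_k c_{\bfj,\overline{\bfj'}}(\Delta(k))\bigr)$ together with the constraint $\weight(\Delta(\bfd))=\lambda$ determines $\bfd$. This is where Lemma~\ref{lem:MinorString} does the work: by parts (1)--(3) the vectors $c_{\bfj,\overline{\bfj'}}(\Delta(k))$ have a triangular shape with respect to the level index $k$ --- the $\BN^N$-component of $c_{\bfj,\overline{\bfj'}}(\Delta(k))$ is supported in coordinates $\leq k$ with a $1$ in coordinate $k$ for $k\in[1,2N]$ (in the second $\BN^N$-slot for $k\in[1,N]$, first slot for $k\in[N+1,2N]$), while for $k\in[-r,-1]$ only the $X$-component $\omega_{|k|}$ contributes. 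Reading off the coordinates of $\sigma(\bfd)$ from the top level downward recovers the $d_k$ for $k\in[1,2N]$ one at a time; the remaining $d_k$ for frozen $k<0$ are then pinned down by the weight equation $\sum_{k<0}d_k\omega_{|k|}=\lambda-\sum_{k>0}d_k\weight(\Delta(k))$, using that the $\omega_{i}$ are linearly independent in $X$. I would spell out this triangular unravelling as the core computation; everything else is the formal ``unitriangular leading-term'' argument. The main obstacle, modest as it is, is bookkeeping: carefully matching the level structure of Lemma~\ref{lem:MinorString} to the coordinates of $\sigma(\bfd)$ and confirming that the extraction order (high level to low) is consistent, so that injectivity genuinely holds and the maximal-$\sigma$ term survives.
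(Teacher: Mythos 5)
Your proposal is correct and follows essentially the same route as the paper's proof: expand $\overline{\Delta(\bfd)}$ in the basis $\wt{B}$, use Lemma~\ref{lem:MinorPO} to identify a leading term at $\sigma(\bfd)=\sum_k d_k c_{\bfj,\overline{\bfj'}}(\Delta(k))$ with coefficient $\pm1$, and use the triangular shape of the string data in Lemma~\ref{lem:MinorString} to see that $\bfd\mapsto\sigma(\bfd)$ is injective. The paper simply asserts this injectivity, whereas you spell out the coordinate-by-coordinate extraction; otherwise the arguments coincide.
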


\begin{proof}
    For $\bfd\in\BN^J$, let us write $\overline{\Delta(\bfd)}=\sum_{b\in\wt{B}}Q_{\bfd,b}b$ in $\BC[G]$ where $Q_{\bfd,b}\in\BC$. By Lemma \ref{lem:MinorPO} and Lemma \ref{lem:MinorString}, $Q_{\bfd,b}=0$ unless $c_{\bfj,\overline{\bfj'}}(b)\leq \sum_{k\in J}d_kc_{\bfj,\overline{\bfj'}}(\Delta(k))$, and when the equality holds, $Q_{\bfd,b}=\pm1$. By Lemma~\ref{lem:MinorString}, when $\bfd\neq \bfd'$, the leading terms $\sum_{k\in J}d_kc_{\bfj,\overline{\bfj'}}(\Delta(k))\neq \sum_{k\in J}d'_kc_{\bfj,\overline{\bfj'}}(\Delta(k))$, so we get the linear independence.
\end{proof}

Meanwhile, from the decomposition
\[
\BC[\Env G]\simeq \bigoplus_{\mu\in X} \left(\bigoplus_{\lambda\leq \mu}V(\lambda)^*\ox V(\lambda)\right)e^{\mu},
\]
any $f\in \BC[\Env G]$ can be uniquely written as
\[
f=\sum_{\lambda\in X^+,\gamma\in R^+}f_{\lambda,\gamma}e^{\lambda+\gamma}
\]
where $f_{\lambda,\gamma}\in V(\lambda)^*\ox V(\lambda)$. We also let
\[
f_\lambda=\sum_{\gamma\in R^+}f_{\lambda,\gamma}e^{\lambda+\gamma}.
\]
The similar decomposition applies to $\BC[G\x^Z T]$, with $R^+$ replaced with $R$.

For any $i\in I$, let $\nu_i$ be the valuation on $\BC[\Env G]$ given by
\begin{equation}
\label{eq:VinbergValuation}
\nu_i(f)=\min\{\langle \gamma,\omega_i^\vee\rangle\mid f_{\lambda,\gamma}\neq0\},    
\end{equation}
where $\{\omega_i^\vee\}_{i\in I}$ are the fundamental coweights of $G$. The valuation $\nu_i$ extends to the field of fractions $\mathbb{C}(\Env G)$. We have
\[
\BC[\Env G]=\{f\in \BC[G\x^ZT]\mid \nu_i(f)\geq0 \text{ for all $i\in I$}\}.
\]

\subsection{Framed groups}
\label{sec:cluster:framed}

Recall that $G$ is a semisimple simply connected group. Let $\CD$ be the root datum associated with $G$. 

\begin{definition}
    The \emph{framed root datum} $\wt{\CD}=(\wt{I},\wt{A},\wt{X},\wt{Y},(\wt{\alpha}_i)_{i\in \wt{I}},(\wt{\alpha}_i^\vee)_{i\in\wt{I}})$ is the simply connected Kac--Moody root datum such that:
    \begin{itemize}
        \item The index set $\wt{I}=I\sqcup I'$, where $I'$ is a copy of $I$ with indices denoted by $i'$ for $i\in I$;
        \item The generalized Cartan matrix $\wt{A}$ is given by $\wt{a}_{ij}=a_{ij}$ for $i,j\in I$, $\wt{a}_{i'j}=\wt{a}_{ij'}=-\delta_{ij}$, and $\wt{a}_{i'j'}=2\delta_{ij}$, for $i,j\in I$.
    \end{itemize}
    The \textit{framed group} $\wt{G}$ of $G$ is defined to be the minimal Kac--Moody group associated with $(\wt{I},\wt{A})$.
\end{definition}

Let $(\wt{\omega}_i)_{i\in \wt{I}}$ be the set of fundamental weights. Let $\wt{W}$ be the Weyl group associated with $\wt{\CD}$. Let $\wt{W}_I$ be the parabolic subgroup of $\wt{W}$ and $\wt{L}_I$ be the standard Levi subgroup of $\wt{G}$ corresponding to $I$. We have the natural identification $\iota:W\to \wt{W}_I$ and the embedding $\iota:G\to \wt{L}_I$.

Let $Z\subset G$ be the center of $G$, and let
$G\x ^Z T=(G\x T)/\widetilde{Z}$, where $\widetilde{Z}=\{(z,z^{-1})\mid z\in Z\}\subset G\x T$.

Define $\varphi:G \x^Z T\to \wt{G}$ to be the map given by
\[
[(g,t)]\mapsto \iota(gt)\prod_{i\in I}\wt{\alpha}_{i'}^\vee(\alpha_i(t)).
\]

\begin{lemma}
\label{lem:BruhatIso}
The map $\varphi$ is an isomorphism onto $\wt{L}_I$. Furthermore, for any $u,v\in W$, $\varphi$ restricts to an isomorphism
\[
G^{u,v} \x^Z T\isoto \wt{G}^{\iota(u),\iota(v)}.
\]
\end{lemma}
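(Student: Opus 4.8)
The plan is to analyze $\varphi$ structurally, first establishing that it is a well-defined injective group homomorphism and then identifying its image precisely with the Levi subgroup $\wt{L}_I$. First I would check well-definedness: the formula $[(g,t)]\mapsto \iota(gt)\prod_{i\in I}\wt{\alpha}_{i'}^\vee(\alpha_i(t))$ must be invariant under replacing $(g,t)$ by $(gz,z^{-1}t)$ for $z\in Z$. Since $\iota(gz\cdot z^{-1}t)=\iota(gt)$, it suffices to check that $\prod_i\wt{\alpha}_{i'}^\vee(\alpha_i(z^{-1}t))=\prod_i\wt{\alpha}_{i'}^\vee(\alpha_i(t))$, i.e.\ that $\alpha_i(z)=1$ for all $i\in I$ and $z\in Z$; this holds because $Z\subset T$ is cut out precisely by the vanishing of the roots (as $G$ is semisimple, the roots span a finite-index sublattice of the character lattice of the adjoint torus, and $Z$ is the kernel of $T\to T_{ad}$). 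That $\varphi$ is a homomorphism follows from the fact that the extra factor $\prod_i\wt{\alpha}_{i'}^\vee(\alpha_i(t))$ lands in the torus $\wt{T}$, which is central in $\wt{L}_I$ up to the $\iota(G)$-part, combined with multiplicativity of $t\mapsto\alpha_i(t)$ and of cocharacters.

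Next I would identify the image. The Levi $\wt{L}_I$ is generated by $\wt{T}$ together with the root subgroups $U_{\pm\wt{\alpha}_i}$ for $i\in I$; equivalently $\wt{L}_I=\iota(G)\cdot\wt{T}$ since the derived group of $\wt{L}_I$ is $\iota(G)$ (the $I$-block of the Cartan matrix is exactly $A$) and the remaining torus directions are spanned by the $\wt{\alpha}_{i'}^\vee$. Now $\varphi$ visibly surjects onto $\iota(G)$ (take $t=1$, noting $\alpha_i(1)=1$) and, reading off the torus part, the coweights $\iota(\alpha_i^\vee)$ together with $\wt{\alpha}_{i'}^\vee(\alpha_i(t))$ sweep out all of $\wt{T}$: concretely, on the torus $\varphi$ sends $Y\oplus(\text{cochar of }T)$ via $\alpha_i^\vee\mapsto\iota(\alpha_i^\vee)+\sum_j a_{ij}\wt{\alpha}_{j'}^\vee$ and the $T$-cocharacters to $\sum_j(\cdot)\wt{\alpha}_{j'}^\vee$, and one checks this $\BZ$-linear map is an isomorphism onto $\wt{Y}=\BZ[\wt{\alpha}_i^\vee,\wt{\alpha}_{i'}^\vee]$ using that the $\wt{\alpha}_{i'}^\vee$ are linearly independent from $\iota(Y)$ and that, after killing those, $\alpha_i^\vee\mapsto\iota(\alpha_i^\vee)$ is the identification $W\cong\wt{W}_I$ at the level of cocharacter lattices. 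Injectivity of $\varphi$ then follows: if $\varphi([(g,t)])=e$, projecting modulo $\iota(G)$ forces the $\wt{\alpha}_{i'}^\vee$-components to vanish, hence $\alpha_i(t)=1$ for all $i$, so $t\in Z$; then $[(g,t)]=[(gt,1)]$ and $\iota(gt)=e$ gives $gt=e$.

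For the second assertion, I would use the Bruhat-theoretic description together with compatibility of $\varphi$ with the pinnings. The key point is that $\varphi$ is compatible with the Borels and with the canonical torus-coset data: $\varphi(B^\pm\x^Z T)=\wt{B}^\pm\cap\wt{L}_I$, and for $w\in W$ the lift $\dot{w}$ maps to $\iota(\dot{w})$ under $\iota:G\to\wt{L}_I$ (since $\iota$ respects the $SL_2$-triples indexed by $I$, and $\dot{w}$ is built from the $\dot{s}_i$ for $i\in I$), and this agrees with the Kac--Moody lift $\widetilde{\dot{\iota(w)}}$ in $\wt{G}$ because $\iota(w)$ has a reduced word entirely in $I$. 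Moreover the central torus factor $\prod_i\wt{\alpha}_{i'}^\vee(\alpha_i(t))$ normalizes both $\wt{B}^+$ and $\wt{B}^-$ (it lies in $\wt{T}$). Therefore
\[
\varphi\bigl(B^+\dot{u}B^+\x^Z T\bigr)=\wt{B}^+\,\iota(\dot{u})\,\bigl(\wt{B}^+\cap\wt{L}_I\bigr)\cdot\wt{T}\subseteq \wt{B}^+\iota(\dot{u})\wt{B}^+,
\]
and symmetrically for $B^-\dot{v}B^-$; combined with the already established fact that $\varphi$ is an isomorphism onto $\wt{L}_I$, and that $\wt{G}^{\iota(u),\iota(v)}\subset\wt{L}_I$ (double Bruhat cells for Weyl elements in $\wt{W}_I$ are contained in the Levi), one gets $\varphi(G^{u,v}\x^Z T)\subseteq\wt{G}^{\iota(u),\iota(v)}$. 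The reverse inclusion follows by applying the same argument to $\varphi^{-1}$, or more cleanly by a dimension count: both sides have dimension $\ell(u)+\ell(v)+\dim T+r=\ell(\iota(u))+\ell(\iota(v))+\operatorname{rk}\wt{T}$ (since $\dim T+r=r+r=\operatorname{rk}\wt{T}$ and $\ell(\iota(w))=\ell(w)$), $\varphi$ is an isomorphism, and $\wt{G}^{\iota(u),\iota(v)}$ is irreducible, so the inclusion of irreducible varieties of equal dimension whose source is closed in $\wt{L}_I\cap(\text{the cell's closure}\dots)$---more simply, $\varphi$ restricts to an isomorphism onto its image, which is an irreducible subvariety of $\wt{G}^{\iota(u),\iota(v)}$ of the full dimension, hence equals it.

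I expect the main obstacle to be the bookkeeping in the second part: verifying carefully that $\varphi$ intertwines the \emph{double} Bruhat stratification, which requires matching the representative lifts $\dot{u},\dot{v}$ in $G$ with those in $\wt{G}$ (and checking the extra $\wt{T}$-factor does not perturb which Bruhat cell one lands in) and then clinching equality rather than mere inclusion. The torus/cocharacter linear-algebra identification in the first part is routine once set up, and the dimension formula $\dim\wt{G}^{\iota(u),\iota(v)}=\ell(u)+\ell(v)+2r$ is immediate from the general formula recalled in Section~\ref{sec:prelim:Bruhat} applied to the Kac--Moody group $\wt{G}$ (whose torus has rank $2r$), so the crux is purely the Bruhat-compatibility argument.
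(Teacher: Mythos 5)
Your overall strategy coincides with the paper's: injectivity is read off from the $\alpha_i(t)$-components, surjectivity from the decomposition $\wt{L}_I=\iota(G)\cdot\wt{T}$ together with a torus computation, and the double Bruhat statement from $\iota(B^{\pm}\dot{w}B^{\pm})\subseteq\wt{B}^{\pm}\iota(\dot{w})\wt{B}^{\pm}$. Two steps, however, are not correct as written. First, the cocharacter-lattice map $Y\oplus Y\to\wt{Y}$ you describe is injective but \emph{not} surjective: its cokernel is finite of order $|Z|$ (already for $G=SL_2$ the $\wt{\alpha}_{1'}^\vee$-component of the image is $2\BZ$, since $\langle\alpha^\vee,\alpha\rangle=2$). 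This does not damage the conclusion --- a finite-cokernel map of cocharacter lattices still induces a surjection of tori, and the resulting finite kernel is exactly what the quotient by $\wt{Z}$ removes --- but "isomorphism onto $\wt{Y}$" should read "injective with finite cokernel, hence surjective on tori". Relatedly, $\wt{T}$ is not central in $\wt{L}_I$; what makes $\varphi$ multiplicative is that the specific element $\iota(t)\prod_{i}\wt{\alpha}_{i'}^\vee(\alpha_i(t))$ is annihilated by every $\wt{\alpha}_j$ with $j\in I$ and hence centralizes $\iota(G)$ (this is precisely the computation carried out in the proof of Lemma~\ref{lem:PullbackCluster}). For the present lemma one can also bypass multiplicativity altogether, as the paper does, and conclude from bijectivity via Zariski's main theorem that $\varphi$ is an isomorphism of varieties.

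Second, the final step of your Bruhat argument --- "an irreducible subvariety of $\wt{G}^{\iota(u),\iota(v)}$ of full dimension, hence equals it" --- is not valid: a dense open subset also has full dimension. The clean way to upgrade the inclusion $\varphi(G^{u,v}\x^Z T)\subseteq\wt{G}^{\iota(u),\iota(v)}$ to an equality is a partition argument: the cells $G^{u,v}\x^Z T$ for $(u,v)\in W\x W$ partition $G\x^Z T$; the cells $\wt{G}^{\iota(u),\iota(v)}$ are pairwise disjoint and all contained in $\wt{L}_I$ (each lies in the intersection of the two opposite standard parabolics of type $I$, which is $\wt{L}_I$); and $\varphi$ is a bijection onto $\wt{L}_I$. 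A bijection sending the blocks of a partition of the source into pairwise disjoint subsets covering the target must send each block onto its target block, so every inclusion is an equality. With these two repairs your argument is complete and agrees with the paper's.
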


\begin{proof}

We firstly show that $\varphi$ is injective. Suppose  $\varphi(g,t)=\varphi(g',t')$. Then we have $\alpha_i(t)=\alpha_i(t')$ for all $i\in I$, so $t$ and $t'$ differ by an element in $Z$. We also get $gt=g't'$, so $(g,t)=(g',t')$ in $G \x^Z T$.

We next show that the image is exactly $\wt{L}_I$. For any $\wt{g}$ in $\wt{L}_I$, we can write $\wt{g}=\iota(g)\wt{t}$ for some $g\in G$ and $\wt{t}\in \wt{T}$, where $\wt{T}\subset\wt{G}$ is the maximal torus. We can further choose $t\in T$ such that
\[
\wt{t}=\iota(t')\prod_{i\in I}\wt{\alpha}_{i'}^\vee(\alpha_i(t))
\]
for some $t'\in T$. Then $\wt{g}=\varphi(gt't^{-1},t)$, so $\varphi$ is surjective. By Zariski's main theorem, we conclude that $\varphi$ is an isomorphism.

The statement on the double Bruhat cells can be verified by directly checking $\iota(B^+\dot{u}B^+)\subset \wt{B}^+\iota(\dot{u})\wt{B}^+$ and $\iota(B^-\dot{v}B^-)\subset \wt{B}^-\iota(\dot{v})\wt{B}^-$ for $u,v\in W$.
\end{proof}

Let $\bfi$ be a double reduced word for $(w_0,w_0)$. Then via the inclusion $\iota$, $\bfi$ is also a double reduced word for $(\iota(w_0),\iota(w_0))\in \wt{W}\x\wt{W}$. We denote by $\wt{\bfs}(\bfi)$ the seed for the cluster structure on $\wt{G}^{\iota(w_0),\iota(w_0)}$ associated with the double reduced word $\bfi$ (See Section \ref{sec:prelim:Bruhat}). Note that the cluster variables are of the form $\Delta_{\iota(u)\wt{\omega}_i,\iota(v)\wt{\omega}_i}$ and $\Delta_{\wt{\omega}_{i'},\wt{\omega}_{i'}}$ for some $u,v\in W$ and $i\in I$.

By Lemma~\ref{lem:BruhatIso}, the map $\varphi$ restricts to an isomorphism 
\[
G^{w_0,w_0}\x ^Z T\overset{\sim}{\rightarrow} \wt{G}^{\iota(w_0),\iota(w_0)},
\]
which we still denote by $\varphi$. Note that $X$ is isomorphic to the group of characters on $T$. For any $\mu\in X$, write $e^\mu$ to denote the associated character on $T$, which is a regular function on $T$.

\begin{lemma}
\label{lem:PullbackCluster}
Let $\varphi^*$ denote the pullback by $\varphi$. For $u,v\in W$ and $i\in I$, we have
\begin{align*}
\varphi^*(\Delta_{\iota(u)\wt{\omega}_i,\iota(v)\wt{\omega}_i})&=\Delta_{u\omega_i,v\omega_i}e^{\omega_i}, & \varphi^*(\Delta_{\wt{\omega}_{i'},\wt{\omega}_{i'}})&=e^{\alpha_i}.
\end{align*}
\end{lemma}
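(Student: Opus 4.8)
The plan is to compute each generalized minor by realizing it as a matrix coefficient of an irreducible $\wt{G}$-module $V(\wt{\omega}_j)$, restricting to the Levi $\wt{L}_I$, and then isolating the ``torus twist'' produced by the correction factor $\prod_{i\in I}\wt{\alpha}_{i'}^\vee(\alpha_i(t))$ built into $\varphi$. The first, and key, step is to observe that the torus element $\wt{t} := \iota(t)\prod_{j\in I}\wt{\alpha}_{j'}^\vee(\alpha_j(t))$, for which $\varphi([(g,t)]) = \iota(g)\,\wt{t}$, is \emph{central} in $\wt{L}_I$. Indeed, for $k\in I$, conjugation of $\iota(x_k(a))$ by $\iota(t)$ rescales $a$ by $\wt{\alpha}_k(\iota(t)) = \alpha_k(t)$, while conjugation by $\prod_{j}\wt{\alpha}_{j'}^\vee(\alpha_j(t))$ rescales $a$ by $\prod_j\alpha_j(t)^{\wt{a}_{j'k}} = \alpha_k(t)^{-1}$ (using $\wt{a}_{j'k} = -\delta_{jk}$); the two effects cancel, and likewise for $\iota(y_k(a))$. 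Since $\wt{L}_I = \wt{T}\cdot\iota(G)$ is generated by $\wt{T}$ and the $\iota(x_k),\iota(y_k)$ with $k\in I$, and $\wt{t}\in\wt{T}$, this proves the centrality. Because the Levi embedding respects pinnings, $\dot{\iota(w)} = \iota(\dot{w})$ for $w\in W$, so for all $u,v\in W$
\[
\iota(\dot{u})^{-1}\,\varphi([(g,t)])\,\iota(\dot{v}) = \wt{t}\cdot\iota(\dot{u}^{-1}g\dot{v}).
\]

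Next I would identify, inside $V(\wt{\omega}_j)$, the $\wt{L}_I$-submodule $N_j$ generated by the highest weight vector $\eta_{\wt{\omega}_j}$; this is the only part of the module seen by $\Delta^{\wt{\omega}_j}$ restricted to $\wt{L}_I$. For $j = i'\in I'$, the vanishing $\langle\wt{\alpha}_k^\vee,\wt{\omega}_{i'}\rangle = 0$ for all $k\in I$ forces $N_{i'} = \BC\,\eta_{\wt{\omega}_{i'}}$, on which $\wt{L}_I$ acts by the character that equals $\wt{\omega}_{i'}$ on $\wt{T}$ and is trivial on $\iota(G)$. For $j = i\in I$, the vector $\eta_{\wt{\omega}_i}$ is a highest weight vector for $\mathfrak{g}_I\cong\mathfrak{g}$ of weight $\omega_i$ (since $\langle\wt{\alpha}_k^\vee,\wt{\omega}_i\rangle = \delta_{ki}$); as $V(\wt{\omega}_i)$ is integrable over $\wt{\mathfrak{g}}$ it is integrable over $\mathfrak{g}_I$, so $N_i$ is an integrable highest weight $\mathfrak{g}$-module, hence irreducible and isomorphic to $V_G(\omega_i)$ as an $\iota(G)$-module with $\eta_{\wt{\omega}_i}\leftrightarrow\eta_{\omega_i}$. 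Being $\wt{L}_I$-irreducible, $N_i$ admits the central element $\wt{t}$ acting by the single scalar $\wt{\omega}_i(\wt{t})$.

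Finally I would assemble the computation. For $i\in I$, combining the two steps,
\[
\Delta_{\iota(u)\wt{\omega}_i,\iota(v)\wt{\omega}_i}(\varphi([(g,t)])) = \Delta^{\wt{\omega}_i}\big(\wt{t}\,\iota(\dot{u}^{-1}g\dot{v})\big) = \wt{\omega}_i(\wt{t})\cdot\Delta^{\omega_i}(\dot{u}^{-1}g\dot{v}) = \wt{\omega}_i(\wt{t})\,\Delta_{u\omega_i,v\omega_i}(g),
\]
and a direct pairing computation in the framed root datum gives $\wt{\omega}_i(\wt{t}) = \wt{\omega}_i(\iota(t))\prod_j\alpha_j(t)^{\langle\wt{\alpha}_{j'}^\vee,\wt{\omega}_i\rangle} = \omega_i(t)\cdot 1 = e^{\omega_i}([(g,t)])$, using $\langle\wt{\alpha}_{j'}^\vee,\wt{\omega}_i\rangle = 0$; for $i'\in I'$ the minor equals the character value $\wt{\omega}_{i'}(\wt{t}) = \prod_j\alpha_j(t)^{\langle\wt{\alpha}_{j'}^\vee,\wt{\omega}_{i'}\rangle} = \alpha_i(t) = e^{\alpha_i}([(g,t)])$, using $\wt{\omega}_{i'}(\iota(t)) = 1$ and $\langle\wt{\alpha}_{j'}^\vee,\wt{\omega}_{i'}\rangle = \delta_{ij}$. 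I expect the main obstacle to be precisely the centrality claim in the first step: it requires carefully tracking the $\wt{T}$-conjugation on root subgroups through the off-diagonal entries of the framed Cartan matrix, and it is exactly the cancellation that motivates the correction factor in $\varphi$ in the first place; a secondary technical point is the irreducibility of $N_i$, which I would settle by invoking the standard fact that an integrable highest weight module over a symmetrizable Kac--Moody algebra (here of finite type) is irreducible.
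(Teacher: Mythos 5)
Your proposal is correct and follows essentially the same route as the paper: both hinge on showing that $\wt{t}=\iota(t)\prod_{j\in I}\wt{\alpha}_{j'}^\vee(\alpha_j(t))$ commutes with $\iota(G)$ (the paper checks $\wt{\alpha}_k(\wt{t})=1$ via $\wt{a}_{kj'}=-\delta_{kj}$, which is the same cancellation you verify on root subgroups) and then evaluate the minor as the product of the $G$-minor with the character value $\wt{\omega}_i(\wt{t})=\omega_i(t)$, resp.\ $\wt{\omega}_{i'}(\wt{t})=\alpha_i(t)$. Your additional discussion of the $\wt{L}_I$-submodule generated by the highest weight vector merely makes explicit a step the paper leaves implicit.
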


\begin{proof}
For $t\in T$, we write $\wt{t}=\iota(t)\prod_{i\in I}\wt{\alpha}_{i'}^\vee(\alpha_i(t))\in \wt{G}$. Since $\wt{a}_{ij'}=-\delta_{ij}$ for $i,j\in I$, we have
\[
\wt{\alpha}_i(\wt{t})=\alpha_i(t)\alpha_i(t)^{-1}=1,
\]
for all $i\in I$. Therefore $t$ commutes with $\iota(G)$.

For any $g\in G$ and $t\in T$, we have
\begin{align*}
\Delta_{\iota(u)\wt{\omega}_i,\iota(v)\wt{\omega}_i}(\varphi(g,t))&=\Delta^{\wt{\omega}_i}(\dot{u}^{-1}\iota(g)\wt{t}\dot{v})\\
&=\Delta^{\wt{\omega}_i}(\dot{u}^{-1}\iota(g)\dot{v}\wt{t})\\&=\Delta^{\omega_i}(\dot{u}^{-1}g\dot{v})\wt{\omega}_i(\wt{t})\\
&=\Delta_{u\omega_i,v\omega_i}(g)\omega_i(t),
\end{align*}
and
\[
\Delta_{\wt{\omega}_{i'},\wt{\omega}_{i'}}(\varphi(g,t))=\Delta^{\wt{\omega}_{i'}}(\wt{t})=\alpha_i(t).
\]
Therefore we conclude the proof.
\end{proof}

\begin{example}
Continuing with the running example $G=SL_2$, we have $\wt{G}=SL_3$. The Levi subgroup $\wt{L}_I$ contains matrices of the form $\begin{pmatrix}
* & * & 0\\
* & * & 0\\
0 & 0 & *\\
\end{pmatrix}$, and $\wt{L}_I\simeq GL_2$. The isomorphism $G\x^ZT\simeq \wt{L}_I$ is explicitly given by
\[
\left(\begin{pmatrix}
x_{11} & x_{12}\\
x_{21} & x_{22}
\end{pmatrix},\begin{pmatrix}
t & 0\\
0 & t^{-1}
\end{pmatrix}\right)\mapsto \begin{pmatrix}
x_{11}t & x_{12}t & 0\\
x_{21}t & x_{22}t & 0\\
0 & 0 & t^{-2}
\end{pmatrix}.
\]
\end{example}

\subsection{Partially compactifications}
\label{sec:cluster:partial}

In this subsection, we perform partially compactifications on the seed $\wt{\bfs}(\bfi)$ and prove the first main theorem of this paper.

For a double reduced word $\bfi$ for $(w_0,w_0)$ which is also viewed as a double reduced word for $(\iota(w_0),\iota(w_0))$, let $\wt{\bfs}(\bfi)$ be the seed for the cluster structure on $\wt{G}^{\iota(w_0),\iota(w_0)}$ as before. Let $\wt{J}$ be the index set for $\wt{\bfs}(\bfi)$. The frozen vertices $\wt{J}_\fro$ can be identified with $I^-\sqcup I'\sqcup I^+$, where $I^-$ are the negative vertices on levels in $I$, $I'$ are the negative vertices on levels in $I'$, and $I^+$ are the positive frozen vertices on levels in $I$. Recall the isomorphism $a_\bfi:\CU(\wt{\bfs}(\bfi))\overset{\sim}{\rightarrow}\BC[\wt{G}^{\iota(w_0),\iota(w_0)}]$ from Theorem~\ref{thm:DBCcluster}.

Let $\Psi_\bfi=\varphi^*\circ a_\bfi$ denote the composition
\[
\Psi_\bfi:\CU(\wt{\bfs}(\bfi))\overset{\sim}{\longrightarrow}\BC[G^{w_0,w_0}\x ^Z T].
\]

We have two valuations on $\BC[G^{w_0,w_0}\x ^Z T]$, one coming from the cluster structure via $\Psi_\bfi$, one coming from the geometry of the Vinberg monoid. For any $i\in I$, let $\nu^\cl_i$ be the cluster valuation on $\CU(\wt{\bfs}(\bfi))$ for the unique frozen variable on level $i'$, as defined in Section \ref{sec:prelim:cluster}. Since the seeds for different choices of $\bfi$ are mutation equivalent, $\nu^\cl_i$ is independent of the choice of $\bfi$. We also have the valuation $\nu_i$ by restricting from \eqref{eq:VinbergValuation}.

\begin{prop}
\label{prop:ValuationAgreeVinberg}
For any $i\in I$, the valuations $\nu^\cl_i$ and $\nu_i$ agree.
\end{prop}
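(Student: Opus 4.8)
The goal is to show that the cluster valuation $\nu^{\cl}_i$ attached to the frozen variable on level $i'$ agrees, under $\Psi_{\bfi}$, with the Vinberg valuation $\nu_i$ on $\BC[G^{w_0,w_0}\times^Z T]$. By Lemma~\ref{lem:PullbackCluster}, $\Psi_{\bfi}$ sends the frozen variable $A_{i',\wt{\bfs}(\bfi)}=\Delta_{\wt{\omega}_{i'},\wt{\omega}_{i'}}$ to $e^{\alpha_i}$, and it sends the other cluster variables $\Delta(k;\bfi)$ (for $k\in\wt{J}$ on a level in $I$) to $\Delta_{u_{\leq k}\omega_{|i_k|},v_{>k}\omega_{|i_k|}}\,e^{\omega_{|i_k|}}$, i.e. to $\Delta(k)\,e^{\weight(\Delta(k))}$ in the notation of Section~\ref{sec:cluster:weight}. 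So I want to compute $\nu_i$ of an arbitrary Laurent monomial in these images and compare with the linear functional that $\nu^{\cl}_i$ defines on the tropical/$g$-vector lattice.

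\emph{Step 1: reduce to a statement about monomials in the initial cluster.} Since both $\nu^{\cl}_i$ (by \cite{Qin25}*{Lemma~2.12}, as recalled after \eqref{eq:uos}) and $\nu_i$ are genuine valuations on the ambient field, and since $\nu^{\cl}_i$ is determined by its values on the initial cluster variables of $\wt{\bfs}(\bfi)$ together with the ``$\min$ of exponents'' rule on the Laurent expansion, it suffices to show that for every element $f\in\BC[G^{w_0,w_0}\times^Z T]$, $\nu_i(f)$ equals the minimum, over the Laurent monomials appearing in the expansion of $\Psi_{\bfi}^{-1}(f)$ in the cluster $\wt{\bfs}(\bfi)$, of the corresponding linear form in the exponents. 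Concretely, on the initial cluster, a monomial $\prod_{k\in\wt J}A_{k,\wt{\bfs}(\bfi)}^{d_k}$ has $\nu^{\cl}_i$-value $d_{i'}$, while $\Psi_{\bfi}$ sends it to $\bigl(\prod_{k}\Delta(k)^{d_k}\bigr)\,e^{\mu}$ with $\mu=\sum_k d_k\weight(\Delta(k))$; since $\Delta(k)$ has weight $\omega_{|i_k|}$ and $\Delta(\mathbf 0,\dots)$-type leading-weight considerations from Lemma~\ref{lem:MinorString} and Lemma~\ref{lem:MonomialLinIndep} pin down the weight filtration, I can read off $\nu_i$ of that image.

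\emph{Step 2: the key inequality $\nu_i \ge \nu^{\cl}_i$.} For any $f\in\BC[\Env G]$, expand $\Psi_{\bfi}^{-1}(f)\in\CU(\wt{\bfs}(\bfi))$ as a Laurent polynomial in the initial cluster; applying $\Psi_{\bfi}$ and projecting to the top weight space via $\pi_{\weight}$, each initial monomial contributes (by Lemma~\ref{lem:TensorMultiplication} and Lemma~\ref{lem:MinorPO}) a dual-canonical-basis element whose string parameter is controlled by Lemma~\ref{lem:MinorString}; matching the $T$-grading $e^{\mu}$ shows that the power of $e^{\alpha_i}$ in each monomial is exactly $d_{i'}$, and the Vinberg valuation $\nu_i$ picks out $\langle\gamma,\omega_i^\vee\rangle$ where $\mu=\lambda+\gamma$ with $\gamma\in R^+$. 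The point is that $\langle\gamma,\omega_i^\vee\rangle\ge d_{i'}$ for each monomial, because the ``$\omega_i$-content'' coming from the $\Delta(k)$ on levels in $I$ (which carry weight $\omega_{|i_k|}$, not involving $e^{\alpha_i}$ directly but contributing to $\lambda$) and the explicit $e^{\alpha_i}$-factors combine so that the $\alpha_i$-coordinate of $\gamma$ is bounded below by $d_{i'}$; hence $\nu_i(f)\ge\nu^{\cl}_i(\Psi_{\bfi}^{-1}(f))$.

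\emph{Step 3: the reverse inequality.} For equality I need to exhibit, for a general $f$, a monomial realizing the bound — equivalently, show that the top weight piece (in the $\nu_i$-sense) of $f$ does not cancel. This is where Lemma~\ref{lem:MonomialLinIndep} is essential: the leading terms $\overline{\Delta(\mathbf d)}$ of distinct monomials are linearly independent, so no cancellation in the relevant weight space can occur among the minimizing monomials; thus the minimum is attained and $\nu_i=\nu^{\cl}_i$. Equivalently, one can argue via the $g$-vector/degree pairing: the frozen $g$-vector of level $i'$ is dual to $\nu^{\cl}_i$, and one checks on the initial cluster (using Lemma~\ref{lem:PullbackCluster}) that this functional computes the $\alpha_i^\vee$-coordinate of the ``excess weight'' $\gamma$, which is precisely the definition \eqref{eq:VinbergValuation} of $\nu_i$ up to the identification $\langle\gamma,\omega_i^\vee\rangle$ vs. the $\alpha_i$-coefficient of $\gamma$ (these agree because $G$ is simply connected and the $\omega_i^\vee$ are dual to the $\alpha_i$).

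\textbf{Main obstacle.} The delicate point is Step~2/Step~3 together: controlling how the weight grading behaves under the cluster mutation structure, i.e. showing that \emph{in every} seed (not just the initial one) the Laurent expansion interacts with the weight filtration of $\BC[\Env G]$ so that $\nu_i$ reads off $d_{i'}$. The cleanest route is to avoid general seeds entirely by invoking seed-independence of $\nu^{\cl}_i$ (already cited) and then doing all the weight bookkeeping in the \emph{initial} seed $\wt{\bfs}(\bfi)$ using Lemmas~\ref{lem:MinorString}, \ref{lem:TensorMultiplication}, \ref{lem:MinorPO}, \ref{lem:MonomialLinIndep}. So the real work is: (i) identify the linear functional $d_{i'}$ on the $g$-vector lattice with the composite ``extract $e^{\mu}$-degree, pair $\gamma$ with $\omega_i^\vee$,'' and (ii) verify no cancellation via the linear independence of leading terms. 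Everything else is bookkeeping with the string parametrization established in Section~\ref{sec:cluster:weight}.
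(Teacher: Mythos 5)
Your proposal is correct and follows essentially the same route as the paper: reduce via seed-independence to the initial (unshuffled) seed, get $\nu_i\ge\nu^{\cl}_i$ by expanding into cluster monomials and taking minima over the $e^{\lambda+\gamma}$-graded pieces, and get the reverse inequality by invoking Lemma~\ref{lem:MonomialLinIndep} to rule out cancellation of the leading (maximal-weight) Peter--Weyl component. The paper's write-up of the non-cancellation step is slightly more explicit (it selects a maximal $\lambda_0$ and observes that maximality forces $f_{\lambda_0,\gamma}=\overline{P_{\lambda_0,\gamma}}\neq 0$), but this is exactly the mechanism you describe.
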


\begin{proof}
We will work with an unshuffled double reduced word for $(w_0,w_0)$. By the explicit description of the cluster variables in $\wt{\bfs}(\bfi)$, $\nu^\cl_i$ and $\nu_i$ agree on cluster variables.

To show that $\nu^\cl_i=\nu_i$, it suffices to check on any cluster polynomial $f$ in $\wt{\bfs}(\bfi)$. Note that $f$ can be written as
\[
f=\sum_{\lambda} P_{\lambda,\gamma} e^{\lambda+\gamma},
\]
where $P_{\lambda,\gamma}$ are cluster polynomials in $\bfs(\bfi)$ with $\weight(P_{\lambda,\gamma})=\lambda$. We have
\begin{align*}
\nu_i(f)&\geq \min\{\nu_i(P_{\lambda,\gamma}e^{\lambda+\gamma})\mid P_{\lambda,\gamma}\neq0\}\\
&\geq\min\{\langle \gamma,\omega_i^\vee\rangle\mid P_{\lambda,\gamma}\neq0\}\\
&=\nu^\cl_i(f).
\end{align*}
To show the converse, it suffices to show that $\nu_i(f)=0$ whenever $\langle \gamma,\omega^\vee_i\rangle=0$ for any $P_{\lambda,\gamma}\neq 0$. Recall that we denote by $f_\lambda$ the projection of $f$ onto the Peter-Weyl component $\BC[G]_\lambda$.   Let $\lambda_0$ be a maximal weight among the lambdas such that $f_\lambda\neq0$. By Lemma \ref{lem:MonomialLinIndep}, the projection $\overline{P_{\lambda_0,\gamma}}$ to $\BC[G]_{\lambda_0}$ is nonzero. The maximality of $\lambda_0$ ensures that $f_{\lambda_0,\gamma}=\overline{P_{\lambda_0,\gamma}}$, so $\nu_i(f)=0$.
\end{proof}

On the other hand, consider the functions $\{\Delta_{w_0\omega_i,\omega_i}e^{\omega_i}, \Delta_{\omega_i,w_0\omega_i}e^{\omega_i}\mid i\in I\}$ in $\BC[G\x^ZT]$. They are prime, defining the irreducible subvarieties $\overline{B^-s_iw_0 B^-}\x^Z T$ (resp., $\overline{B^+s_iw_0 B^+}\x^Z T$) in $G\x^ZT$. Denote by $\nu_{i,-}$ (resp., $\nu_{i,+}$) the corresponding valuations. By Lemma \ref{lem:PullbackCluster}, these functions are frozen variables in $\wt{\bfs}(\bfi)$ under the isomorphism $\Psi_\bfi$, so we also have the corresponding cluster valuations $\nu^\cl_{i,\pm}$. For brevity, we will denote corresponding prime functions by $A_{i,-}$ (resp. $A_{i,+}$).

The following proposition follows from the same techniques introduced in \cite{QY25}*{Appendix~B}.

\begin{prop}
\label{prop:ValuationAgreeGroup}
For any $i\in I$, the valuations $\nu^\cl_{i,\pm}$ and $\nu_{i,\pm}$ agree.
\end{prop}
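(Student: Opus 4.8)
The plan is to reduce the statement for the frozen valuations $\nu^{\cl}_{i,\pm}$, which live on an open double Bruhat cell, to the global statement about the prime divisors of the functions $A_{i,\pm}$ on $G\times^Z T$, exactly as in \cite{QY25}*{Appendix~B}. First I would recall that the cluster valuation $\nu^{\cl}_{i,-}$ (resp. $\nu^{\cl}_{i,+}$) is, by \cite{Qin25}*{Lemma~2.12}, independent of the seed and equals the order of vanishing of the frozen variable $A_{i,-}$ (resp. $A_{i,+}$) along the corresponding frozen divisor inside $\Spec\overline{\CU}$. On the other hand, $\nu_{i,\pm}$ is by definition the order of vanishing along the prime divisor cut out by the function $A_{i,\pm}=\Delta_{w_0\omega_i,\omega_i}e^{\omega_i}$ (resp.\ $\Delta_{\omega_i,w_0\omega_i}e^{\omega_i}$) in $\BC[G\times^Z T]$, which by Lemma~\ref{lem:PullbackCluster} is precisely (the pullback under $\Psi_\bfi$ of) that same frozen variable. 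So the content of the proposition is that the cluster divisor and the honest zero divisor of the frozen variable coincide scheme-theoretically; equivalently, the frozen variable generates the maximal ideal of the local ring at the generic point of its vanishing locus.

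The key steps I would carry out are: (1) identify the boundary divisor $\Spec\overline{\LP}(\wt\bfs(\bfi)) \setminus \Spec\LP(\wt\bfs(\bfi))$ associated to the frozen vertex on level $i$ (the "$\pm$" version) with the closure $\overline{B^\mp s_i w_0 B^\mp}\times^Z T$ inside $G\times^Z T$, using that $\Psi_\bfi$ is an isomorphism onto $\BC[G^{w_0,w_0}\times^Z T]$ and that the complement of the open double Bruhat cell in $G$ is a union of such Bruhat divisors; (2) show that the function $A_{i,\pm}$ is \emph{reduced} along this divisor, i.e.\ $\nu_{i,\pm}(A_{i,\pm})=1$ — this is where primeness of $\Delta_{w_0\omega_i,\omega_i}$, already asserted in the paragraph before the proposition, is used, together with the standard fact (from \cite{FZ02} or \cite{BFZ05}) that a generalized minor $\Delta_{w_0\omega_i,\omega_i}$ vanishes to order exactly one on the corresponding Schubert divisor; (3) conclude that, since both $\nu^{\cl}_{i,\pm}$ and $\nu_{i,\pm}$ are discrete valuations on the function field $\BC(G\times^Z T)$ centered on the same prime divisor and each assigns the value $1$ to the uniformizing frozen variable $A_{i,\pm}$, they must be equal. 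Throughout, one must also check that the cluster structure on $\wt G^{\iota(w_0),\iota(w_0)}$ being on the \emph{framed} group $\wt G$ rather than on $G\times^Z T$ directly causes no harm: via $\varphi$ these are the same variety, and the extra frozen variables $e^{\alpha_i}=\varphi^*(\Delta_{\wt\omega_{i'},\wt\omega_{i'}})$ are units on $G^{w_0,w_0}\times^Z T$, so they do not affect the valuations $\nu_{i,\pm}$.

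The main obstacle I anticipate is step (2), controlling the order of vanishing: one needs that the frozen variable $A_{i,-}=\Delta_{w_0\omega_i,\omega_i}e^{\omega_i}$, pulled back to $G\times^Z T$, vanishes to order exactly $1$ on the divisor $\overline{B^- s_i w_0 B^-}\times^Z T$, and likewise that the cluster-theoretic zero locus of the corresponding frozen variable in $\Spec\overline{\LP}(\wt\bfs(\bfi))$ is reduced and irreducible. The reducedness on the cluster side follows because in a seed $\overline{\LP}(\bfs)=\BC[A_{i,\bfs}^{\pm 1},A_j]$ is a Laurent polynomial ring, hence the vanishing locus of a single frozen variable $A_j$ is a smooth — in particular reduced, irreducible — divisor; matching it with the Bruhat divisor uses the explicit description of $G^{w_0,w_0}$ as $\{g : \Delta^{\omega_i}(g)\ne 0,\ \Delta_{w_0\omega_i,w_0\omega_i}(g)\ne 0\ \forall i\}$ and the fact that the only prime divisors in the complement are the Schubert divisors $\overline{B^\pm s_i w_0 B^\pm}$. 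Granting these standard facts about double Bruhat cells and generalized minors from \cites{FZ02,BFZ05}, together with the dictionary supplied by Lemma~\ref{lem:PullbackCluster} and Lemma~\ref{lem:BruhatIso}, the proposition follows by the same argument as \cite{QY25}*{Appendix~B}, and I would simply cite that reference for the remaining routine verifications.
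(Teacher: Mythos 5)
Your high-level picture is right: both $\nu^{\cl}_{i,\pm}$ and $\nu_{i,\pm}$ are divisorial valuations on $\BC(G\times^Z T)$, and once one knows they are centered on the same height-one prime the equality is automatic (two DVRs of the same field, one containing the other, coincide; your appeal to $\nu_{i,\pm}(A_{i,\pm})=1$ via primeness of the minors is also fine). The genuine gap is your step (1), which is where the entire content of the proposition sits. The isomorphism $\Psi_\bfi$ only identifies $\Spec\CU(\wt\bfs(\bfi))$ with $G^{w_0,w_0}\times^Z T$, and the divisor $\overline{B^{\pm}s_iw_0B^{\pm}}\times^Z T$ lies entirely in the \emph{complement} of that open cell. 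Knowing that this complement is a union of Bruhat divisors cut out by the frozen variables says nothing about where the boundary divisor $\{A_{i,\pm}=0\}$ of the partial compactification $\Spec\overline{\LP}^{\{j\}}(\wt\bfs(\bfi))$ of the cluster torus sits under the birational comparison with $G\times^Z T$: a priori the generic point of that cluster boundary divisor could map into a stratum of codimension at least two (making $\nu^{\cl}_{i,\pm}$ an ``exceptional'' valuation, still taking the value $1$ on $A_{i,\pm}$ but positive on functions not divisible by it --- compare the order-of-vanishing-at-the-origin valuation on $\BC[x,y]$ versus the valuation along $\{x=0\}$, both of which assign $1$ to $x$), or some regular function on $G\times^Z T$ could acquire a pole there. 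Nothing in your sketch excludes either possibility, and the identification of the two divisors is in fact \emph{equivalent} to the statement $\nu^{\cl}_{i,\pm}\geq 0$ on $\BC[G\times^Z T]$ together with $\nu^{\cl}_{i,\pm}\leq\nu_{i,\pm}$, i.e.\ to the proposition itself.

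What closes this gap in the paper is not a divisor-matching argument but the algebraic one from \cite{QY25}*{Appendix~B}: one direction ($\nu^{\cl}_{i,\pm}\leq\nu_{i,\pm}$ on cluster polynomials) is proved by the string-parametrization/leading-term argument of Proposition~\ref{prop:ValuationAgreeVinberg}; for the other, given $f\in\BC[G\times^Z T]$ one takes the minimal cluster monomial $M$ with $fM$ a cluster polynomial and shows that if $A_{i,+}$ divided $M$ then $fM$ would vanish on $\overline{B^+s_iw_0B^+}\times^Z T$, contradicting the fact that $fM$, being a cluster polynomial not divisible by $A_{i,+}$, restricts to a nonzero element of $\BC[G^{w_0s_i,w_0}\times^Z T]$ via the cluster structure on the smaller double Bruhat cell. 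This descent to $G^{w_0s_i,w_0}$ is the step you would need in order to justify your step (1), and it is not a ``routine verification'' that can simply be delegated to the citation while keeping your steps (2)--(3) as the proof.
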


\begin{proof}
The proof of $\nu^\cl_{i,\pm}(f)\leq \nu_{i,\pm}(f)$ for any cluster polynomial $f$ in $\wt{\bfs}(\bfi)$ is the same as that of Proposition~\ref{prop:ValuationAgreeVinberg}.

Conversely, let $f\in \BC[G\x^ZT]$ and we express $f$ as a Laurent polynomial in $\LP(\wt{\bfs}(\bfi))$. Let $M$ be the cluster monomial with the minimal degree such that $fM$ is a cluster polynomial. Suppose $\nu^\cl_{i,+}(f)<0$. Then $M$ is divisible by $A_{i,+}$, so $fM=0$ in the quotient $\BC[G\x^ZT]/(A_{i,+})$.

By the description of double Bruhat cells, $\BC[G^{w_0s_i,w_0}\x^ZT]$ is a localization of the quotient $\BC[G\x^ZT]/(A_{i,+})$, and the cluster variables in $\wt{\bfs}(\bfi)$ except $A_{i,+}$ correspond to cluster variables in a seed $\wt{\bfs}(\bfi)'$ for $\BC[G^{w_0s_i,w_0}\x^ZT]$. By the minimality of $M$, $fM$ is not divisible by $A_{i,+}$ as a cluster polynomial, so $fM\neq0$ in $\BC[G^{w_0s_i,w_0}\x^ZT]$, which is a contradiction. In other words, $\nu^\cl_{i,+}(f)\geq0$ and similarly $\nu^\cl_{i,-}(f)\geq0$. Since $\nu^\cl_{i,+}(A_{i,+})=\nu_{i,+}(A_{i,+})=1$ and $\nu^\cl_{i,-}(A_{i,-})=\nu_{i,-}(A_{i,-})=1$, we conclude that $\nu^\cl_{i,\pm}\geq \nu_{i,\pm}$.
\end{proof}

We are now ready to prove the main result of the paper.

\begin{theorem}
\label{thm:mainthm}
Let $\bfi$ be a double reduced word for $(w_0,w_0)$. Set $\Sigma=\wt{J}_\fro\backslash I'$. The isomorphism $\Psi_\bfi$ restricts to isomorphisms
\[
\overline{\CU}^\Sigma(\wt{\bfs}(\bfi))\overset{\sim}{\rightarrow}\BC[G\x ^Z T],\qquad \overline{\CU}(\wt{\bfs}(\bfi))\overset{\sim}{\rightarrow}\BC[\Env G].
\]
In other words, we have the following commutative diagram
\begin{equation}\label{eq:cr}
    \begin{tikzcd}
        & \CU(\wt{\bfs}(\bfi)) \arrow[r,"\sim"] & \BC[G^{w_0,w_0}\x ^Z T] \\
        & \overline{\CU}^\Sigma(\wt{\bfs}(\bfi)) \arrow[r,"\sim"] \arrow[u,hook] & \BC[G\x^Z T] \arrow[u,hook] \\
        & \overline{\CU}(\wt{\bfs}(\bfi)) \arrow[u,hook] \arrow[r,"\sim"] & \BC[\Env G] \arrow[u,hook]
    \end{tikzcd}
\end{equation}
\end{theorem}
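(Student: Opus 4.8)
The plan is to realise each of the four algebras in \eqref{eq:cr} as the set of elements of a common fraction field on which a prescribed collection of valuations is nonnegative, and then to match these collections using Propositions \ref{prop:ValuationAgreeVinberg} and \ref{prop:ValuationAgreeGroup}. To set up notation, recall that under the ring isomorphism $\Psi_\bfi=\varphi^*\circ a_\bfi$ (Theorem \ref{thm:DBCcluster} and Lemma \ref{lem:BruhatIso}) the frozen variables indexed by $I'$ are carried to $\varphi^*(\Delta_{\wt{\omega}_{i'},\wt{\omega}_{i'}})=e^{\alpha_i}$, while those indexed by $\Sigma=\wt{J}_\fro\setminus I'$ are carried to the functions $A_{i,\pm}$, namely $\Delta_{\omega_i,w_0\omega_i}e^{\omega_i}$ and $\Delta_{w_0\omega_i,\omega_i}e^{\omega_i}$ for $i\in I$ (Lemma \ref{lem:PullbackCluster}). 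Thus the frozen cluster valuations split under $\Psi_\bfi$ into the family $\{\nu^\cl_{i,\pm}:i\in I\}$ attached to $\Sigma$ and the family $\{\nu^\cl_i:i\in I\}$ attached to $I'$.

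For the first isomorphism I would combine three inputs. By \eqref{eq:uos}, $\overline{\CU}^\Sigma(\wt{\bfs}(\bfi))=\{f\in\CU(\wt{\bfs}(\bfi))\mid \nu^\cl_{i,\pm}(f)\geq 0\text{ for all }i\in I\}$. By Proposition \ref{prop:ValuationAgreeGroup}, $\Psi_\bfi$ carries $\nu^\cl_{i,\pm}$ to $\nu_{i,\pm}$, the divisorial valuation of $G\x^Z T$ along the prime divisor $\overline{B^\pm s_i w_0 B^\pm}\x^Z T$. Finally, $G\x^Z T$ is smooth (it is the group of units of $\Env G$), and the $2r$ divisors just listed are exactly the prime divisors of $G\x^Z T$ contained in the complement of the open dense subset $G^{w_0,w_0}\x^Z T$; hence a rational function regular on $G^{w_0,w_0}\x^Z T$ extends to $G\x^Z T$ if and only if it has no pole along any $\overline{B^\pm s_i w_0 B^\pm}\x^Z T$, i.e., $\BC[G\x^Z T]=\{g\in\BC[G^{w_0,w_0}\x^Z T]\mid \nu_{i,\pm}(g)\geq 0\text{ for all }i\in I\}$. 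Transporting this last identity through the bijection $\Psi_\bfi$ and comparing with the previous two displays shows that $\Psi_\bfi$ restricts to an isomorphism $\overline{\CU}^\Sigma(\wt{\bfs}(\bfi))\isoto\BC[G\x^Z T]$.

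The second isomorphism follows by one more step of the same kind. Applying \eqref{eq:uos} inside $\overline{\CU}^\Sigma$ gives $\overline{\CU}(\wt{\bfs}(\bfi))=\{f\in\overline{\CU}^\Sigma(\wt{\bfs}(\bfi))\mid \nu^\cl_i(f)\geq 0\text{ for all }i\in I\}$; Proposition \ref{prop:ValuationAgreeVinberg} lets us replace $\nu^\cl_i$ by the Vinberg valuation $\nu_i$; and by Section \ref{sec:prelim:Vinberg} one has $\BC[\Env G]=\{f\in\BC[G\x^Z T]\mid \nu_i(f)\geq 0\text{ for all }i\in I\}$. Hence $\Psi_\bfi$ restricts to an isomorphism $\overline{\CU}(\wt{\bfs}(\bfi))\isoto\BC[\Env G]$. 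The commutativity of \eqref{eq:cr} is then automatic, since all of its horizontal arrows are restrictions of the single ring map $\Psi_\bfi$ and all of its vertical arrows are the evident inclusions $\overline{\CU}(\wt{\bfs}(\bfi))\subset\overline{\CU}^\Sigma(\wt{\bfs}(\bfi))\subset\CU(\wt{\bfs}(\bfi))$ and $\BC[\Env G]\subset\BC[G\x^Z T]\subset\BC[G^{w_0,w_0}\x^Z T]$.

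With Propositions \ref{prop:ValuationAgreeVinberg} and \ref{prop:ValuationAgreeGroup} already established, the argument introduces no genuinely new ingredient; the one point that must be handled with some care is the claim used in the middle step that the complement of $G^{w_0,w_0}\x^Z T$ in $G\x^Z T$ contributes no prime divisors beyond the $2r$ boundary divisors $\overline{B^\pm s_i w_0 B^\pm}\x^Z T$ — this is the standard description of the boundary of the top double Bruhat cell (the elements of $W$ of length $\ell(w_0)-1$ are exactly the $s_iw_0$), and once it is in place the proof is a routine assembly of the facts collected above.
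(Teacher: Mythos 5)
Your proof is correct and follows essentially the same route as the paper's: both arguments apply the valuation characterization \eqref{eq:uos} together with Propositions \ref{prop:ValuationAgreeGroup} and \ref{prop:ValuationAgreeVinberg} to identify $\overline{\CU}^\Sigma$ with $\BC[G\x^Z T]$ and then $\overline{\CU}$ with $\BC[\Env G]$. The only difference is that you spell out the justification of the identity $\BC[G\x^Z T]=\{g\in\BC[G^{w_0,w_0}\x^Z T]\mid \nu_{i,\pm}(g)\geq 0\}$ (normality plus the fact that the codimension-one boundary components of the open double Bruhat cell are exactly the $\overline{B^\pm s_i w_0 B^\pm}\x^Z T$), a step the paper's proof leaves implicit.
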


\begin{proof}
Recall the characterization of partially compactifications using valuations in \eqref{eq:uos}. By Proposition~\ref{prop:ValuationAgreeGroup}, $\nu^\cl_{i,\pm}=\nu_{i,\pm}$, so
\[
\Psi_\bfi(\overline{\CU}^\Sigma(\wt{\bfs}(\bfi)))=\{f\in\BC[G^{w_0,w_0}\x ^Z T]\mid \nu_{i,\pm}(f)\geq 0,\forall i\in I\}=\BC[G\x^ZT].
\]

Similarly, by Proposition~\ref{prop:ValuationAgreeVinberg}, $\nu^\cl_i=\nu_i$, so
\[
\Psi_\bfi(\overline{\CU}(\wt{\bfs}(\bfi)))=\{f\in \BC[G\x^ TZ]\mid \nu_i(f)\geq0,\forall i\in I\}=\BC[\Env G].
\]
\end{proof}

Recall the frozen vertices $I'\subset \wt{J}_\fro$ of the seed $\wt{\bfs}(\bfi)$. By \eqref{eq:ff}, there is a canonical map $\pi^{I'}:\Spec \overline{\CU}(\wt{\bfs}(\bfi))\rightarrow\BC^{I'}\cong \BC^I$. The last isomorphism in \eqref{eq:cr} induces an isomorphism
\begin{equation}
\Env G\overset{\sim}{\longrightarrow} \Spec \overline{\CU}(\wt{\bfs}(\bfi))
\end{equation}
as varieties. By direct inspection of the frozen variables, we obtain the following corollary, which provides a cluster interpretation for the Vinberg degeneration $\pi:\Env G\rightarrow \BC^I$.

\begin{cor}\label{cor:spec}
The diagram
\begin{equation}
    \begin{tikzcd}
    \Env G \arrow[dr,"\pi"'] \arrow[rr,"\sim"] && \Spec \overline{\CU}(\wt{\bfs}(\bfi)) \arrow[dl,"\pi^{I'}"] \\ & \BC^I
    \end{tikzcd}
\end{equation}
commutes.
\end{cor}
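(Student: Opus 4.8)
The plan is to reduce the statement to a single identity between coordinate functions. By Theorem~\ref{thm:mainthm} the horizontal isomorphism is $\Spec$ of the ring isomorphism $\Psi_\bfi\colon\overline{\CU}(\wt{\bfs}(\bfi))\xrightarrow{\sim}\BC[\Env G]$; by \eqref{eq:vd} the map $\pi$ is the morphism associated with the inclusion $\BC[e^{\alpha_i}]_{i\in I}\hookrightarrow\BC[\Env G]$; and by \eqref{eq:ff} (with $\Sigma'=I'$ and $\Sigma=\wt{J}_\fro$) the map $\pi^{I'}$ is the morphism associated with the inclusion $\BC[A_j]_{j\in I'}\hookrightarrow\overline{\CU}(\wt{\bfs}(\bfi))$. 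Under the identification $\BC^{I'}\cong\BC^I$ built into the definition of $\pi^{I'}$, commutativity of the triangle is therefore equivalent to the assertion that, for each $i\in I$, writing $j\in I'$ for the unique frozen vertex on level $i'$, one has $\Psi_\bfi(A_j)=e^{\alpha_i}$ in $\BC[\Env G]$.

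To prove this I would first pin down the cluster variable $a_\bfi(A_j)$. As recorded in Section~\ref{sec:cluster:framed}, the double reduced word $\bfi$ for $(\iota(w_0),\iota(w_0))$ uses only letters from $\pm I$, since $\iota(w_0)\in\wt{W}_I$; hence each level $i'\in I'$ is attained exactly once, at the negative---hence frozen---vertex $j$ with $|i_j|=i'$, in accordance with the decomposition $\wt{J}_\fro=I^-\sqcup I'\sqcup I^+$. By \eqref{eq:dki} the corresponding cluster variable is $a_\bfi(A_j)=\Delta(j;\bfi)=\Delta_{\wt{\omega}_{i'},\,\iota(w_0)^{-1}\wt{\omega}_{i'}}$. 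Since $\langle\wt{\alpha}_l^\vee,\wt{\omega}_{i'}\rangle=\delta_{l,i'}=0$ for every $l\in I$, each simple reflection $s_l$ with $l\in I$ fixes $\wt{\omega}_{i'}$, so $\iota(w_0)^{-1}\wt{\omega}_{i'}=\wt{\omega}_{i'}$ and therefore $a_\bfi(A_j)=\Delta_{\wt{\omega}_{i'},\wt{\omega}_{i'}}$. As $\Psi_\bfi=\varphi^*\circ a_\bfi$, Lemma~\ref{lem:PullbackCluster} now yields $\Psi_\bfi(A_j)=\varphi^*(\Delta_{\wt{\omega}_{i'},\wt{\omega}_{i'}})=e^{\alpha_i}$, which is the desired identity.

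Reassembling, the composite of the horizontal isomorphism with $\pi^{I'}$ corresponds at the level of coordinate rings to the inclusion $\BC[e^{\alpha_i}]_{i\in I}\hookrightarrow\BC[\Env G]$, i.e.\ to $\pi$, so the triangle commutes. I do not expect a genuine obstacle here: once Theorem~\ref{thm:mainthm} and Lemma~\ref{lem:PullbackCluster} are available, the argument is pure bookkeeping. The only step deserving a moment's care is the identification of the frozen variable on level $i'$ with the principal minor $\Delta_{\wt{\omega}_{i'},\wt{\omega}_{i'}}$, which rests on the two elementary observations above: that $\bfi$ involves no letter from $\pm I'$, and that $\iota(w_0)$ fixes $\wt{\omega}_{i'}$---both forced by the shape of the framed Cartan matrix $\wt{A}$.
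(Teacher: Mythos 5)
Your argument is correct and is exactly the ``direct inspection of the frozen variables'' that the paper invokes without writing out: the frozen vertex on level $i'$ carries the minor $\Delta_{\wt{\omega}_{i'},\iota(w_0)^{-1}\wt{\omega}_{i'}}=\Delta_{\wt{\omega}_{i'},\wt{\omega}_{i'}}$, which Lemma~\ref{lem:PullbackCluster} sends to $e^{\alpha_i}$, matching the description of $\pi$ in \eqref{eq:vd} and of $\pi^{I'}$ in \eqref{eq:ff}. No gaps; this is the same route as the paper, just spelled out.
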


\begin{example}
For $G=SL_2$, the quiver for $\wt{\bfs}(\bfi)$ is given by
\begin{center}
\begin{tikzpicture}[every node/.style={inner sep=0, minimum size=0.5cm, thick, fill=white, draw}, x=2cm, y=1cm]
\node (1) at (1,0) [circle]{$1$};
\node (2) at (0,0) {$2$};
\node (3) at (2,0) {$3$};
\node (0) at (1,1.5) {$0$};
\drawpath{0,1,2}{black}
\drawpath{1,3}{black}
\end{tikzpicture}
\end{center}
where $1$ is the only unfrozen vertex and $\Sigma=\{2,3\}$. The cluster variables are $A_1=x_{11}e^\omega$, $A_2=x_{12}e^\omega$, $A_3=x_{21}e^\omega$, $A_1'=x_{22}e^\omega$, $A_0=e^\alpha=e^{2\omega}$.

We have
\[
\overline{\CU}^\Sigma(\wt{\bfs}(\bfi))=\BC[A_1,A_2,A_3,A_1',A_0^{\pm}]/(A_1A_1'-A_2A_3-A_0)\simeq \BC[GL_2],
\]
and
\[
\overline{\CU}(\wt{\bfs}(\bfi))=\BC[A_1,A_2,A_3,A_1',A_0]/(A_1A_1'-A_2A_3-A_0)\simeq \BC[M_2].
\]
This recovers the direct calculations in Example~\ref{eg:part1}.
\end{example}

\section{partially compactification of Levi subgroups}
\label{sec:Levi}

\subsection{partially compactification of Levi subgroups}
\label{sec:Levi:cpt}

Still let $G$ be a simply connected semisimple group. Let $\dot{G}$ be the minimal Kac--Moody group associated with $\dot{\CD}=(\dot{I},\dot{A},\dot{X},\dot{Y},(\dot{\alpha}_i)_{i\in \dot{I}},(\dot{\alpha}_i^\vee)_{i\in\dot{I}})$, such that there is a subset $I\subset \dot{I}$ giving a sub-root datum which is identical to the root datum of $G$.

Let $\dot{L}_I\subset \dot{G}$ be the standard Levi subgroup for $I$, so $\dot{L}_I$ is a reductive group with derived group $G$. $\dot{L}_I$ contains the double Bruhat cell $\dot{G}^{w_I,w_I}$. By Theorem~\ref{thm:DBCcluster}, we have an isomorphism $\BC[\dot{G}^{w_I,w_I}]\simeq \CU(\dot{\bfs})$, where $\dot{\bfs}$ is the seed associated to any double reduced word for $(w_I,w_I)$.

Let $\Sigma$ be the frozen vertices of $\dot{\bfs}$ on levels in $I$. The same statement as Proposition~\ref{prop:ValuationAgreeGroup} applied to $\dot{G}$ allows us to determine the partially compactification of $\CU(\dot{\bfs})$ along $\Sigma$.

\begin{prop}
\label{prop:LeviCluster}
The partially compactification $\overline{\CU}^\Sigma(\dot{\bfs})\simeq \BC[\dot{L}_I]$.
\end{prop}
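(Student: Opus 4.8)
The plan is to mimic the argument for Theorem~\ref{thm:mainthm}, but now applied to the group $\dot G$ and its standard Levi $\dot L_I$ in place of the framed group $\wt G$ and $\wt L_I$. The key point is that Proposition~\ref{prop:ValuationAgreeGroup} was proved using only the description of double Bruhat cells and the structure of the seeds $\wt{\bfs}(\bfi)$, so it transfers verbatim to $\dot G$: for each $i\in I$ the prime functions $\Delta_{w_I\dot\omega_i,\dot\omega_i}$ and $\Delta_{\dot\omega_i,w_I\dot\omega_i}$ (viewed on $\dot L_I$, cutting out the irreducible divisors $\overline{B^-s_iw_I B^-}$ and $\overline{B^+s_iw_I B^+}$ inside $\dot L_I$) are, up to relabeling, frozen variables of $\dot{\bfs}$ on levels in $I$, and the associated cluster valuations $\nu^\cl_{i,\pm}$ coincide with the geometric divisorial valuations $\nu_{i,\pm}$ on $\BC[\dot L_I]$.

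First I would record that $\dot G^{w_I,w_I}$ is a dense open subset of $\dot L_I$, with complement equal to the union of the $2|I|$ divisors above, so that, by normality of $\dot L_I$,
\[
\BC[\dot L_I]=\{f\in\BC[\dot G^{w_I,w_I}]\mid \nu_{i,\pm}(f)\geq 0\text{ for all }i\in I\}.
\]
Next I would identify the frozen vertices in $\Sigma$ (those on levels in $I$) with the indices $i,\pm$ via Theorem~\ref{thm:DBCcluster}, so that under the isomorphism $a_{\dot{\bfs}}:\CU(\dot{\bfs})\isoto\BC[\dot G^{w_I,w_I}]$ the frozen variables indexed by $\Sigma$ go to the prime functions $A_{i,\pm}$ (up to nonzero scalars). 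The core step is then the analogue of Proposition~\ref{prop:ValuationAgreeGroup}: the inequality $\nu^\cl_{i,\pm}(f)\leq\nu_{i,\pm}(f)$ on cluster polynomials follows because each $A_{i,\pm}$ is the restriction of a generalized minor and its vanishing order can be computed weight-by-weight exactly as in Proposition~\ref{prop:ValuationAgreeVinberg}; the reverse inequality follows from the ``minimal denominator'' argument, using that $\BC[\dot G^{w_Is_i,w_I}]$ (resp. $\BC[\dot G^{w_I,w_Is_i}]$) is a localization of $\BC[\dot G^{w_I,w_I}]/(A_{i,+})$ (resp. $/(A_{i,-})$) and that the remaining cluster variables descend to a seed for that smaller cell. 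With $\nu^\cl_{i,\pm}=\nu_{i,\pm}$ in hand, the characterization \eqref{eq:uos} of partially compactified upper cluster algebras gives
\[
a_{\dot{\bfs}}\bigl(\overline{\CU}^\Sigma(\dot{\bfs})\bigr)
=\{f\in\BC[\dot G^{w_I,w_I}]\mid \nu_{i,\pm}(f)\geq 0,\ \forall i\in I\}
=\BC[\dot L_I],
\]
which is the claim.

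The main obstacle I anticipate is purely bookkeeping: making precise which frozen vertices of a double reduced word for $(w_I,w_I)$ lie ``on levels in $I$'' and checking that these are exactly the $2|I|$ vertices whose cluster variables are the divisorial minors $\Delta_{w_I\dot\omega_i,\dot\omega_i}$ and $\Delta_{\dot\omega_i,w_I\dot\omega_i}$ — and, dually, that no frozen variable on a level in $\dot I\setminus I$ need be compactified because those correspond to torus characters that are already regular and nonvanishing on $\dot L_I$. The geometric input (that the boundary of $\dot G^{w_I,w_I}$ in $\dot L_I$ is the reduced union of these divisors) is standard for double Bruhat cells, and once it is stated cleanly the valuation comparison is a direct transcription of the proofs of Propositions~\ref{prop:ValuationAgreeVinberg} and~\ref{prop:ValuationAgreeGroup}.
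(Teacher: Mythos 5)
Your proposal is correct and follows essentially the same route as the paper: the paper gives no separate proof of Proposition~\ref{prop:LeviCluster} beyond the remark that ``the same statement as Proposition~\ref{prop:ValuationAgreeGroup} applied to $\dot{G}$'' determines the partial compactification, and your write-up is a faithful elaboration of exactly that argument (identifying the frozen variables on levels in $I$ with the divisorial minors cutting out the boundary of $\dot{G}^{w_I,w_I}$ in $\dot{L}_I$, matching cluster and geometric valuations, and concluding via \eqref{eq:uos}). The bookkeeping caveats you flag are real but routine, just as you anticipate.
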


Recall that $\wt{G}$ is the framed group associated with $G$ and $\wt{L}_I\subset \wt{G}$ is the standard Levi subgroup associated with $I$ (cf. Section \ref{sec:cluster:framed}). There is a $G\times G$-equivariant homomorphism $\rho:\dot{L}_I\to \wt{L}_I$ which is identity on $G$, and for $j\in \dot{I}\backslash I$,
\[
\dot{\alpha}_j^\vee(t)\mapsto \prod_{i\in I}\wt{\alpha}_{i'}^\vee(t)^{-\dot{a}_{ij}}.
\]
For $v,w\in W_I$ and $i\in I$, we have $\rho^*(\Delta_{v\wt{\omega}_i,w\wt{\omega}_i})=\Delta_{v\dot{\omega}_i,w\dot{\omega}_i}$ and $\rho^*(\Delta_{\wt{\omega}_{i'},\wt{\omega}_{i'}})=\prod_{j\in \dot{I}\backslash I}\Delta_{\dot{\omega}_j,\dot{\omega}_j}^{-\dot{a}_{ij}}$.

In terms of the cluster algebra, $\rho^*$ is a coefficient specialization in the sense of \cite{FZ03}. We have an isomorphism
\[
\BC[\dot{L}_I]\simeq \BC[\wt{L}_I]\ox_{\BC[\Delta_{\wt{\omega}_{i'},\wt{\omega}_{i'}}^\pm]_{i\in I}}\BC[\Delta_{\dot{\omega}_j,\dot{\omega}_j}^\pm]_{j\in \dot{I}\backslash I}.
\]

\begin{prop}
\label{prop:LeviCpt}
The partially compactification $\overline{\CU}(\dot{\bfs})$ is the coordinate ring of a flat reductive monoid with unit group $\dot{L}_I$.
\end{prop}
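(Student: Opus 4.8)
The strategy is to reduce Proposition~\ref{prop:LeviCpt} to the case of the framed group, where the partially compactified upper cluster algebra $\overline{\CU}(\wt{\bfs}(\bfi))\simeq\BC[\Env G]$ was already identified with the Vinberg monoid in Theorem~\ref{thm:mainthm}, and then transport the monoid structure along the coefficient specialization $\rho^*$. First I would observe that, by the description preceding the statement, $\overline{\CU}(\dot{\bfs})$ is obtained from $\overline{\CU}^\Sigma(\dot{\bfs})\simeq\BC[\dot{L}_I]$ (Proposition~\ref{prop:LeviCluster}) by further compactifying along the frozen variables $\Delta_{\dot\omega_j,\dot\omega_j}$ for $j\in\dot{I}\setminus I$; equivalently, $\Spec\overline{\CU}(\dot{\bfs})$ is a partial compactification of $\dot{L}_I$ in the directions of these frozen variables. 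On the other hand, from the commutative diagram \eqref{eq:cr}, the framed analogue $\Spec\overline{\CU}(\wt{\bfs}(\bfi))\simeq\Env G$ is a flat reductive monoid with unit group $\wt{L}_I=G\times^ZT$, obtained from $\BC[\wt{L}_I]$ by compactifying along the frozen variables $\Delta_{\wt\omega_{i'},\wt\omega_{i'}}=e^{\alpha_i}$.

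The key step is to show that the base-change isomorphism
\[
\BC[\dot{L}_I]\simeq \BC[\wt{L}_I]\ox_{\BC[\Delta_{\wt{\omega}_{i'},\wt{\omega}_{i'}}^\pm]_{i\in I}}\BC[\Delta_{\dot{\omega}_j,\dot{\omega}_j}^\pm]_{j\in \dot{I}\backslash I}
\]
extends to an isomorphism at the level of the compactifications, namely
\[
\overline{\CU}(\dot{\bfs})\simeq \overline{\CU}(\wt{\bfs}(\bfi))\ox_{\BC[e^{\alpha_i}]_{i\in I}}\BC[\Delta_{\dot{\omega}_j,\dot{\omega}_j}]_{j\in \dot{I}\backslash I},
\]
where now all the relevant frozen variables are \emph{not} inverted on either side. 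To see this I would compare valuations: by Proposition~\ref{prop:ValuationAgreeGroup} applied to $\dot{G}$, the compactification $\overline{\CU}(\dot{\bfs})$ is cut out inside $\CU(\dot{\bfs})$ by the nonnegativity of the cluster valuations at all frozen vertices, and under $\rho^*$ the valuation at a level-$i$ frozen vertex of $\wt{\bfs}(\bfi)$ pulls back to (a positive combination of) the valuations at the vertices on levels $j\in\dot{I}\setminus I$. Concretely, since $\rho^*(\Delta_{\wt\omega_{i'},\wt\omega_{i'}})=\prod_{j}\Delta_{\dot\omega_j,\dot\omega_j}^{-\dot a_{ij}}$ with all exponents $-\dot a_{ij}\ge 0$, requiring $\nu_{\dot\omega_j,\dot\omega_j}\ge 0$ for all $j\in\dot{I}\setminus I$ is, modulo the already-imposed conditions on the level-$I$ frozen variables defining $\BC[\dot L_I]$, equivalent to requiring $e^{\alpha_i}\ge 0$-regularity on the framed side. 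Thus $\Spec\overline{\CU}(\dot{\bfs})$ is precisely the fiber product $\Env G \times_{\BC^I}\Spec\BC[\Delta_{\dot\omega_j,\dot\omega_j}]_{j\in\dot I\setminus I}$, i.e. the base change of the Vinberg monoid along the monoid map $\mathbb{A}^{\dot I\setminus I}\to\mathbb{A}^I$ dual to $e^{\alpha_i}\mapsto\prod_j\Delta_{\dot\omega_j,\dot\omega_j}^{-\dot a_{ij}}$.

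Finally, I would invoke the universal property of the Vinberg monoid recalled in Section~\ref{sec:prelim:Vinberg}: a base change $\Env G\times_{A(\Env G)}A$ along any map $A\to A(\Env G)=\BC^I$ of affine-space abelianizations produces a flat reductive monoid, provided the base is an affine space and the resulting fibers stay reduced and irreducible. Here $A=\Spec\BC[\Delta_{\dot\omega_j,\dot\omega_j}]_{j\in\dot I\setminus I}\cong\BC^{\dot I\setminus I}$, the map to $\BC^I$ is a monomial (monoid) morphism, and the unit group of the base change is exactly the preimage of the torus, which by Proposition~\ref{prop:LeviCluster} and the base-change description of $\BC[\dot L_I]$ is $\dot L_I$. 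Flatness and reducedness/irreducibility of fibers is inherited from the flatness of $\pi:\Env G\to\BC^I$ together with the fact that the monomial map $\BC^{\dot I\setminus I}\to\BC^I$ is flat with integral fibers (the matrix $(-\dot a_{ij})$ has full rank $|I|$, since $\dot L_I$ is reductive with derived group $G$, so each $\alpha_i$ is a genuine nonzero character). The main obstacle I anticipate is checking this last point carefully — that the torus base change does not introduce non-reduced or reducible fibers — and making the valuation-matching argument of the middle paragraph fully rigorous in the generality of Kac--Moody $\dot G$; once the identification $\Spec\overline{\CU}(\dot{\bfs})\simeq\Env G\times_{\BC^I}\BC^{\dot I\setminus I}$ is in hand, the reductive-monoid conclusion is immediate, with unit group $\dot L_I$.
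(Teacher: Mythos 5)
Your proposal shares the paper's central step---the base-change identification
$\overline{\CU}(\dot{\bfs})\simeq \BC[\Env G]\ox_{\BC[\Delta_{\wt{\omega}_{i'},\wt{\omega}_{i'}}]_{i\in I}}\BC[\Delta_{\dot{\omega}_j,\dot{\omega}_j}]_{j\in \dot{I}\backslash I}$, which the paper records as \eqref{eq:universalProp} and justifies, as you do, by the positivity of the exponents $-\dot a_{ij}$---but you finish differently. The paper first shows directly that $\overline{\CU}(\dot{\bfs})$ is stable under the two-sided $\dot L_I$-action (using $\overline{\CU}(\dot{\bfs})=\BC[\dot L_I]\cap\overline{\LP}(\dot{\bfs})$ for the torus factor, and $G\times G$-equivariance of $\rho^*$ together with Theorem~\ref{thm:mainthm} for the semisimple factor), then invokes Rittatore's criterion to recognize a reductive monoid with unit group $\dot L_I$, obtains normality from the fact that a partially compactified upper cluster algebra is an intersection of integrally closed rings, and reads off flatness from the base change. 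You instead appeal to Vinberg's classification in the converse direction: a fiber product $\Env G\times_{\BC^I}A$ along a monoid morphism from an affine space is again a flat reductive monoid. That shortcut is legitimate (it is part of Vinberg's theory and makes the ``universality'' remark after the proposition transparent), but note that the paper only states the universal property in the forward direction, so you would need to cite or verify the converse rather than treat it as recalled.

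Two loose ends in your argument. First, your claim that the monomial map $\BC^{\dot I\setminus I}\to\BC^I$ is flat with integral fibres is false in general (already $(x,y)\mapsto(x,xy)$ fails), and the full-rank observation does not repair it; fortunately it is also unnecessary, since flatness and reduced, irreducible fibres of the new abelianization map follow from base-changing the flat map $\pi:\Env G\to\BC^I$ alone, and the unit group computation only needs the base-change description of $\BC[\dot L_I]$. Second, normality of $\Spec\overline{\CU}(\dot{\bfs})$ is built into this paper's definition of a flat reductive monoid and is not addressed in your write-up; it follows, as in the paper, from $\overline{\CU}(\dot{\bfs})$ being an intersection of the integrally closed rings $\overline{\LP}(\bfs)$.
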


\begin{proof}
We first show that $\overline{\CU}(\dot{\bfs})$ is stable under the $\dot{L}_I\x \dot{L}_I$-action on $\BC[\dot{L}_I]$. We have
\[
\overline{\CU}(\dot{\bfs})=\BC[\dot{L}_I]\cap \overline{\LP}(\dot{\bfs}).
\]
Since $\overline{\LP}(\dot{\bfs})$ is stable under action by $\dot{\alpha}_j^\vee(t)$ for any $j\in \dot{I}\backslash I$, so is $\overline{\CU}(\dot{\bfs})$.

Meanwhile, since $-\dot{a}_{ij}\geq0$ for all $i\in I$, $j\in \dot{I}\backslash I$, one has
\begin{equation}
\label{eq:universalProp}
\overline{\CU}(\dot{\bfs})\simeq \BC[\Env G]\ox_{\BC[\Delta_{\wt{\omega}_{i'},\wt{\omega}_{i'}}]_{i\in I}} \BC[\Delta_{\dot{\omega}_j,\dot{\omega}_j}]_{j\in \dot{I}\backslash I}.
\end{equation}

By Theorem~\ref{thm:mainthm}, the partially compactification $\BC[\Env G]$ is stable under the $G\x G$-action on $\BC[\wt{L}_I]$. $G\x G$ also acts trivially on $\BC[\Delta_{\dot{\omega}_j,\dot{\omega}_j}]_{j\in \dot{I}\backslash I}$. Since $\rho^*$ is $G\x G$-equivariant, we conclude that $\overline{\CU}(\dot{\bfs})$ is stable under the $G\x G$-action, hence the $\dot{L}_I\x \dot{L}_I$-action by combining with the first paragraph.

By \cite{Rit98}*{Proposition~1}, we see that $\overline{\CU}(\dot{\bfs})$ is the coordinate ring of a flat reductive monoid with unit group $\dot{L}_I$. Normality of $\overline{\CU}(\dot{\bfs})$ follows from normality for general (partially compactified) upper cluster algebras, since it is the intersection of the rings $\overline{\LP}^\Sigma(\bfs)$ for $\bfs\sim \dot{\bfs}$, which are integrally closed. The flatness follows from the base change in \eqref{eq:universalProp}.
\end{proof}

\begin{rem}
The isomorphism \eqref{eq:universalProp} is exactly the isomorphism from the universal property of the Vinberg monoid. In this sense, the universality of the Vinberg monoid is partly explained by the universality of coefficient specialization in cluster algebras.
\end{rem}

\subsection{Reductive groups as Levi subgroups}
\label{sec:Levi:reductive}

Recall that the reductive groups with simply connected derived group $G$ are classified by the pairs $(D,\varphi)$ where $D$ is a diagonalizable reductive group, $\varphi$ is an inclusion $Z(G)\hookrightarrow D$, such that the cokernel is a torus \cite{Mil17}*{Remark~19.30}.

\begin{lem}
\label{lem:reductiveLevi}
Let $H$ be a reductive group with simply connected derived group $G$. Then there is a minimal Kac--Moody group $\dot{G}$ with standard Levi subgroup $\dot{L}_I$ isomorphic to $H$.
\end{lem}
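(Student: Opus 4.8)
The plan is to realize $H$ as a standard Levi subgroup by choosing a Kac--Moody root datum $\dot{\CD}$ extending that of $G$ with enough extra torus directions and carefully chosen extra simple roots attached orthogonally to the nodes of $I$. Concretely, write $H$ as the pair $(D,\varphi\colon Z(G)\hookrightarrow D)$ as in \cite{Mil17}*{Remark~19.30}, so that the cocharacter lattice $\dot{Y}_H$ of the maximal torus $T_H$ of $H$ sits in an exact sequence relating $Y=\BZ[\alpha_i^\vee]_{i\in I}$, $\dot{Y}_H$, and the cocharacter lattice of the central torus $D/\varphi(Z(G))$. The first step is to pick a free $\BZ$-complement: choose cocharacters $\dot\gamma_1,\dots,\dot\gamma_k\in \dot{Y}_H$ whose images form a basis of the cocharacter lattice of the central torus, so that $\dot{Y}_H = \BZ[\alpha_i^\vee]_{i\in I}\oplus\BZ[\dot\gamma_j]_{j}$ after possibly enlarging, matching the shape of the framed construction in Section~\ref{sec:cluster:framed} where the extra coweights $\wt{\alpha}_{i'}^\vee$ played exactly this role.

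Next I would build $\dot{I}=I\sqcup I^{\mathrm{new}}$ and a generalized Cartan matrix $\dot{A}$ by adjoining one new node $j$ for each extra basis direction, declaring $\dot a_{jj}=2$, $\dot a_{jj'}=0$ for distinct new nodes, and prescribing the off-diagonal entries $\dot a_{ij}, \dot a_{ji}$ ($i\in I$) as nonpositive integers chosen so that the resulting simply connected root datum $\dot\CD$ has torus $\dot{T}=\dot Y\otimes\BC^\times$ with $\dot Y = \BZ[\dot\alpha_\ell^\vee]_{\ell\in\dot I}$ mapping onto $\dot Y_H$ in the required way; equivalently, one solves a linear system over $\BZ$ so that the restriction of $\dot\CD$ to $I$ is the root datum of $G$ and the quotient torus $\dot L_I$-side data matches $(D,\varphi)$. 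The key point to verify is that the standard Levi $\dot L_I\subset\dot G$ has derived group $G$ (automatic, since $\dot a_{ij}$ for $i,j\in I$ reproduce the Cartan matrix of $G$) and has abelianization/central torus exactly $D/\varphi(Z(G))$ with the center being $\varphi(Z(G))$; this is a direct computation with the character and cocharacter lattices, using that $\dot L_I$ is generated by $\iota(G)$ and $\dot T$ just as in the proof of Lemma~\ref{lem:BruhatIso}.

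The main obstacle, and the only genuinely nontrivial point, is the \emph{integrality}: ensuring that the extra simple roots $\dot\alpha_j$ can be chosen inside the weight lattice $\dot X$ of a \emph{simply connected} Kac--Moody root datum so that $\langle\dot\alpha_\ell^\vee,\dot\alpha_m\rangle=\dot a_{\ell m}$ holds with $\dot a_{ij}\in\BZ_{\le 0}$ for $i\in I$, $j\in I^{\mathrm{new}}$, while still recovering $H$ exactly (not merely an isogenous group). This amounts to checking that the pairing $Z(G)\hookrightarrow D$ can be encoded by integral linear relations among the $\dot\alpha_\ell^\vee$; since $Z(G)=Y^\vee/\BZ[\alpha_i]$ is finite and any finite abelian group embeds (with torus cokernel) into $(\BC^\times)^N$ for large $N$, one has enough freedom to realize $\varphi$ by adjoining sufficiently many new nodes with suitably large $|\dot a_{ij}|$. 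I would therefore finish by an explicit such choice: take $N$ large, let the new coweights generate the needed quotient, and set $\dot a_{ij}$ to be the (nonpositive) integers recording how $\alpha_i^\vee$ pairs against the dual basis — then $\dot L_I\simeq H$ by comparing root data. No deep input beyond the classification of reductive groups and the standard dictionary between Levi subgroups of Kac--Moody groups and sub-root-data is needed.
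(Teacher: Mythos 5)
Your overall strategy is the same as the paper's: encode $H$ by the pair $(D,\varphi\colon Z(G)\hookrightarrow D)$, enlarge the weight lattice by a free rank-$k$ summand, adjoin one new node per extra direction with diagonal entry $2$ and zero entries between new nodes, and arrange the mixed Cartan entries to be nonpositive integers. You also correctly isolate the crux, namely achieving nonpositivity of the new entries while recovering the given $H$ exactly rather than some group with the right derived group.

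Your resolution of that crux, however, has a genuine gap. First, the escape hatch of ``adjoining sufficiently many new nodes / taking $N$ large'' is not available: for a simply connected Kac--Moody datum the maximal torus of $\dot L_I$ has rank $|I|+|I^{\mathrm{new}}|$, so the number of new nodes is forced to equal $\mathrm{rank}(H)-\mathrm{rank}(G)$ and cannot be inflated. Second, the remark that any finite abelian group embeds into $(\BC^\times)^N$ only produces \emph{some} diagonalizable $D$ with \emph{some} embedding of $Z(G)$; it says nothing about realizing the prescribed $(D,\varphi)$, which is exactly the point you flagged. The missing idea is where the actual freedom lives. Dualizing, $(D,\varphi)$ corresponds to a surjection $\varphi^*\colon M\to X/R$ with free kernel $\iota\colon\BZ^k\to M$, and the construction requires a lift $f\colon X\to M$ of $X\to X/R$; the induced map $g\colon R\to\BZ^k$ is what records the new Cartan entries $\dot a_{ji}$. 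The lift is unique only up to replacing $f$ by $f-\iota\delta$ for an arbitrary $\delta\colon X\to\BZ^k$, which changes $g$ to $g+\delta|_R$ without changing the isomorphism class of the resulting extension (hence of $\dot L_I$). Since $R$ has finite index in $X$, the restriction map $\mathrm{Hom}(X,\BZ^k)\to\mathrm{Hom}(R,\BZ^k)$ has finite-index image, so $\delta$ can be chosen to make every coordinate of each $g_\delta(\alpha_i)$ strictly negative. Without this step your ``linear system over $\BZ$'' may have no nonpositive solution for the naive lift, and the claimed identification $\dot L_I\simeq H$ is not established.
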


\begin{proof}
Taking character groups, the data $(D,\varphi)$ is equivalent to a surjection $\varphi^*: M\to X^*(Z(G))$, where $M$ is a finitely generated abelian group, whose kernel is free. Meanwhile, $X^*(Z(G))$ is also the quotient of the weight lattice $X$ of $G$ by the root lattice $R$.

We have the following diagram
\begin{center}
\begin{tikzcd}
 & & 0\ar[d] & 0\ar[d] & \\
 & & \BZ^r\ar[d,"g\oplus h"] & R\ar[d] & \\
0\ar[r] & \BZ^k\ar[r] & \BZ^k\oplus X\ar[d,"\iota\oplus f"]\ar[r] & X\ar[d]\ar[r]\ar[dl,"f"] & 0 \\
0\ar[r] & \BZ^k\ar[r,"\iota"] & M \ar[d]\ar[r,"\varphi^*"] & X^*(Z(G))\ar[d]\ar[r] & 0 \\
 & & 0 & 0 &
\end{tikzcd}
\end{center}
Here $f$ is a lift of $X\to X^*(Z(G))$, $\iota:\BZ^k\to M$ is the kernel of $\varphi^*$. A standard exercise shows that $\iota\oplus f$ is surjective. Let $g\oplus h$ be the kernel of $\iota\oplus f$. Then $h$ is injective and $h(\BZ^r)=R$, so we may assume $h$ is the inclusion $R\hookrightarrow X$.

Any other choice of $f$ is given by $f_\delta=f-\iota\delta$ for some $\delta: X\to \BZ^k$. The corresponding $g_\delta$ is given by $g_\delta=g+\delta|_R$. In particular, we may choose $\delta$ such that each $g_\delta(\alpha_i)$ have negative coefficients in $\BZ^k$. Taking $\BZ^k\oplus X$ as the new weight lattice, we can extend the root datum for $G$ to a Kac--Moody root datum, which gives the desired minimal Kac--Moody group $\dot{G}$.

Finally, for the standard Levi subgroup $\dot{L}_I$, by construction $X^*(Z(\dot{L}_I))\simeq M$, and the inclusion $Z(G)\hookrightarrow Z(\dot{L}_I)$ induces the surjection $M\to X^*(Z(G))$, so $\dot{L}_I\simeq H$.
\end{proof}

Combining with Propositions \ref{prop:LeviCluster} and \ref{prop:LeviCpt}, we obtain the following theorem.

\begin{theorem}
\label{thm:reductiveMonoidCluster}
Let $H$ be a connected reductive group with simply connected derived group. Then the coordinate ring of $H$ is a partially compactified upper cluster algebra $\overline{\mathcal{U}}^\Sigma$, whose further partially compactification $\overline{\mathcal{U}}$ along all frozen variables is the coordinate ring of a flat reductive monoid with unit group $H$.
\end{theorem}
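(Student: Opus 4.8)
The plan is to assemble the statement directly from the three preceding results: Lemma~\ref{lem:reductiveLevi} realizes $H$ as a standard Levi subgroup, and Propositions~\ref{prop:LeviCluster} and~\ref{prop:LeviCpt} supply the cluster structure on $\BC[H]$ together with the identification of the full partial compactification with a reductive monoid. First I would invoke Lemma~\ref{lem:reductiveLevi}: since $H$ has simply connected derived group $G$, there is a minimal Kac--Moody group $\dot{G}$, built on a Kac--Moody root datum $\dot{\CD}$ extending the root datum of $G$ by finitely many vertices $\dot I\setminus I$, such that the standard Levi subgroup $\dot{L}_I\subset \dot{G}$ is isomorphic to $H$. (One should recall here that the construction in Lemma~\ref{lem:reductiveLevi} can be arranged so that $-\dot a_{ij}\geq 0$ for all $i\in I$, $j\in\dot I\setminus I$; this sign condition is exactly what makes the base change in~\eqref{eq:universalProp} a partial compactification rather than a localization, so it must be carried along.)

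Next I would fix a double reduced word for $(w_I,w_I)$ and let $\dot{\bfs}$ be the associated seed, so that Theorem~\ref{thm:DBCcluster} gives $\BC[\dot{G}^{w_I,w_I}]\simeq\CU(\dot{\bfs})$. Let $\Sigma\subset\dot{J}_\fro$ be the set of frozen vertices on levels in $I$. Then Proposition~\ref{prop:LeviCluster} identifies $\overline{\CU}^\Sigma(\dot{\bfs})\simeq\BC[\dot{L}_I]\simeq\BC[H]$, which is the first assertion of the theorem: $\BC[H]$ is a partially compactified upper cluster algebra, with $\Sigma$ as its set of non-invertible frozen variables and $\dot{J}_\fro\setminus\Sigma$ (the frozen vertices on levels in $\dot I\setminus I$) inverted. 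Finally, Proposition~\ref{prop:LeviCpt} says precisely that the further partial compactification $\overline{\CU}(\dot{\bfs})$, obtained by also declaring the vertices in $\dot J_\fro\setminus\Sigma$ non-invertible, is the coordinate ring of a flat reductive monoid whose unit group is $\dot{L}_I\simeq H$. Combining these gives the theorem verbatim.

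Strictly speaking there is almost no calculation left to do; the content of the theorem is entirely in Lemma~\ref{lem:reductiveLevi} and Propositions~\ref{prop:LeviCluster}--\ref{prop:LeviCpt}. The one point that requires a word of care — and is the closest thing to an obstacle — is bookkeeping: one must check that the set $\Sigma$ of non-invertible frozen variables in Proposition~\ref{prop:LeviCluster} is the \emph{same} $\Sigma$ that appears as the intermediate layer in Proposition~\ref{prop:LeviCpt}, i.e.\ that "the frozen vertices on levels in $I$" and "all frozen variables" refer consistently to $\dot{\bfs}$, so that the two partial compactifications $\overline{\CU}^\Sigma(\dot{\bfs})\hookrightarrow\overline{\CU}(\dot{\bfs})$ nest correctly and the resulting monoid genuinely has $H$ as its group of units. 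This is immediate from the way $\dot{\bfs}$ and $\Sigma$ were set up in Section~\ref{sec:Levi:cpt}, so the proof reduces to citing the three results in sequence.
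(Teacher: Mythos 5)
Your proposal is correct and follows exactly the paper's route: the theorem is obtained by combining Lemma~\ref{lem:reductiveLevi} (realizing $H$ as the standard Levi $\dot{L}_I$) with Propositions~\ref{prop:LeviCluster} and~\ref{prop:LeviCpt}, and your remark that the sign condition $-\dot{a}_{ij}\geq 0$ from the construction in Lemma~\ref{lem:reductiveLevi} is what makes \eqref{eq:universalProp} a genuine partial compactification is precisely the point the paper relies on.
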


\begin{rem}
As seen from the proof of Lemma~\ref{lem:reductiveLevi}, there are many choices of $\dot{G}$ containing $H$ as a standard Levi subgroup. This gives different cluster structures on $H$ which are quasi-equivalent in the sense of \cite{Fra16}. We also get different reductive monoids with unit group $H$. However, this construction does not cover all such reductive monoids, as the abelianizations are always isomorphic to an affine space $\BC^n$.
\end{rem}

\begin{example}
We continue with our running example $G=SL_2$. The only rank 2 reductive groups with derived group $SL_2$ are $GL_2$ and $SL_2\x \BC^\x$.

For any non-negative integer $k$, let $\dot{G}$ be the minimal Kac--Moody group associated with the generalized Cartan matrix $\begin{pmatrix}
2 & -1-2k\\
-1-2k & 2
\end{pmatrix}$.

We have
\[
\BC[\dot{L}_I]\simeq \BC[A_1,A_2,A_3,A_1',A_0^\pm]/(A_1A_1'-A_2A_3-A_0^{1+2k}),
\]
and $\dot{L}_I$ is isomorphic to $GL_2$ via the isomorphism $\BC[\dot{L}_I]\to \BC[GL_2]$,
\[
(A_1,A_2,A_3,A_1',A_0)\mapsto(x_{11}\Delta^k,x_{12}\Delta^k,x_{21}\Delta^k,x_{22}\Delta^k,\Delta),
\]
where $\Delta=x_{11}x_{22}-x_{12}x_{21}$.

Under this isomorphism, the partially compactification $\overline{\CU}(\dot{\bfs})$ of $\BC[\dot{L}_I]$ is the subalgebra of $\BC[x_{11},x_{12},x_{21},x_{22}]$ generated by $x_{11}\Delta^k$, $x_{12}\Delta^k$, $x_{21}\Delta^k$, $x_{22}\Delta^k$, and $\Delta$. We see that $\overline{\CU}(\dot{\bfs})$ is stable under the $GL_2\x GL_2$ action. Let $M_k=\text{Spec }\overline{\CU}(\mathbf{s})$. Then $M_k$ is a reductive monoid with unit group $GL_2$.

Let $T$ be the standard torus of $GL_2$. By \cite{BK05}*{Theorem~6.2.13}, the normal reductive monoids $M$ with unit group $GL_2$ can be classified by the $W$-stable rational polyhedral cone with non-empty interior, by identifying $M$ with the convex cone in $X^*(T)_\BR=\BR\otimes_\BZ X^*(T)$ generated by weights of $T$ in $\BC[\overline{T}]$. In our example, let $\varepsilon_i:\begin{pmatrix}
    x_{11} & 0 \\ 0 & x_{22}
\end{pmatrix}\mapsto x_{ii}$ for $i=1,2$ be the standard weights of $T$. Then   
\[
\BC[\overline{T}]\simeq \BC[x_{11}^{k+1}x_{22}^k,x_{11}^kx_{22}^{k+1},x_{11}x_{22}],
\]
and the convex cone is given by the vectors $(k+1)\varepsilon_1+k\varepsilon_2$ and $k\varepsilon_1+(k+1)\varepsilon_2$. By \cite{Vin95}*{Example~2}, these are exactly all the flat reductive monoids with unit group $GL_2$ with the abelianlization isomorphic to the affine line. 

On the other hand, for the generalized Cartan matrix $\begin{pmatrix}
2 & -2k\\
-2k & 2
\end{pmatrix}$, we get the reductive group $SL_2\x \BC^\x$ instead.
\end{example}

\subsection{Reductive monoids}
\label{sec:Levi:monoid}

Let $M$ be a flat reductive monoid.

The abelization $A(M)$ is again a monoid, and its unit group is isomorphic to the torus $G(M)/G_M$. In particular, if $A(M)$ is isomorphic to an affine space $\BC^k$, we have $A(M)\simeq \BC^k$ as algebraic monoids, where the monoid structure on $\BC^k$ is coordinate-wise multiplication. In this subsection, $\BC^k$ will always be equipped with this monoid structure.

\begin{theorem}\label{thm:affineAbelianCluster}
Let $M$ be a flat reductive monoid. Suppose that the semisimple part of $M$ is simply connected and the abelianization of $M$ isomorphic to an affine space $\mathbb{C}^n$. Then the coordinate ring $\BC[M]$ is a partially compactified upper cluster algebra with all frozen variables not invertible.
\end{theorem}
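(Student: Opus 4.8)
The plan is to reduce Theorem~\ref{thm:affineAbelianCluster} to Theorem~\ref{thm:reductiveMonoidCluster} (equivalently Propositions~\ref{prop:LeviCluster} and~\ref{prop:LeviCpt}) by means of the universal property of the Vinberg monoid. Write $G_M$ for the semisimple part of $M$, which by hypothesis is simply connected, and let $H=G(M)$ be the unit group, a connected reductive group with simply connected derived group $G_M$. The abelianization hypothesis says $A(M)\simeq \BC^n$ as algebraic monoids. The first step is to apply Lemma~\ref{lem:reductiveLevi} to $H$: choose a minimal Kac--Moody group $\dot G$ with an index set $I$ (the simple roots of $G_M$) sitting inside $\dot I$, such that the standard Levi $\dot L_I\simeq H$. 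By Proposition~\ref{prop:LeviCpt}, the partial compactification $\overline{\CU}(\dot\bfs)$ is the coordinate ring of a flat reductive monoid $\dot M$ with unit group $H=\dot L_I$ and, crucially, with abelianization $A(\dot M)\simeq \BC^{\dot I\setminus I}$ an affine space. (Here $n=|\dot I\setminus I|$; note that by the classification this integer is forced, since $A(\dot M)$ and $A(M)$ both have unit group the same torus $H/G_M$ and both are affine spaces.)

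The second step is to identify $M$ itself with $\dot M$, or rather to exhibit $\BC[M]$ as a partially compactified upper cluster algebra. Since $G_M\simeq \dot L_I$'s derived group, the isomorphism $H\xrightarrow{\sim}\dot L_I$ restricts to an isomorphism $\varphi_0:G_M\to G_{\dot M}$. Both $M$ and $\dot M$ are flat reductive monoids; by Vinberg's universal property (Section~\ref{sec:prelim:Vinberg}) each admits a canonical homomorphism to $\Env G_M$ extending $\varphi_0$, and $M\simeq \Env G_M\times_{A(\Env G_M)}A(M)$, likewise for $\dot M$. Both $A(M)$ and $A(\dot M)$ are affine spaces $\BC^n$ mapping to $A(\Env G_M)=\BC^I$; I need to check that these two maps $\BC^n\to\BC^I$ agree up to an automorphism of $\BC^n$ as a monoid. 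This is where the hypothesis that $A(M)$ is an affine space (with its coordinatewise monoid structure) does the work: the map $A(M)\to A(\Env G_M)$ is a homomorphism of monoids $\BC^n\to\BC^I$, hence given by a monomial map with nonnegative integer exponents, and the flatness of $M$ constrains the exponent matrix to have full rank $I$; the same holds for $\dot M$. One then checks — using the explicit description of $A(\dot M)$ from the proof of Proposition~\ref{prop:LeviCpt}, where the map $A(\dot M)\to\BC^I$ sends $e^{\dot\alpha_j}\mapsto \prod_i e^{-\dot a_{ij}\alpha_i}$ after the identification — that after a monoid automorphism of $\BC^n$ the two base-change data coincide, whence $M\simeq \dot M$ as monoids over $\Env G_M$, and in particular $\BC[M]\simeq \overline{\CU}(\dot\bfs)$.

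The third, final step is bookkeeping: transport the cluster structure. From $\BC[M]\simeq\overline{\CU}(\dot\bfs)$ we read off that $\BC[M]$ is a partially compactified upper cluster algebra in which the frozen variables are exactly the $\Delta(k;\bfi)$ for $k$ frozen in $\dot\bfs$, none of which are inverted in $\overline{\CU}(\dot\bfs)$ (only $\overline{\CU}^\Sigma(\dot\bfs)\simeq \BC[\dot L_I]$ inverts the frozen variables on levels in $I$). Since $\overline{\CU}$ denotes $\overline{\CU}^{J_\fro}$, all frozen variables are non-invertible, which is precisely the claim.

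The main obstacle I expect is Step~two: verifying that the canonical map $A(M)\to A(\Env G_M)$ really does match, up to a monoid automorphism of the affine space, the analogous map for the cluster-theoretic model $\dot M$ produced by Lemma~\ref{lem:reductiveLevi}. Concretely one must pin down which lattice map $\BC[e^{\alpha_i}]_{i\in I}\hookrightarrow\BC[M]$ is induced on abelianizations, show it is injective with free cokernel (this is flatness plus $A(M)$ being an affine space), and then argue that any two such embeddings $\BZ^I\hookrightarrow\BZ^n$ with the required positivity differ by an automorphism of $\BZ^n$ that is realized by a monoid automorphism of $\BC^n$ — i.e.\ a permutation composed with... actually only permutations of coordinates are monoid automorphisms of $\BC^n$, so the real content is that Lemma~\ref{lem:reductiveLevi} can be run so that the resulting $\dot G$ produces \emph{the} given abelianization map and not merely \emph{an} affine-space abelianization. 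I would handle this by revisiting the diagram in the proof of Lemma~\ref{lem:reductiveLevi} and choosing the splitting and the correction $\delta$ so that the composite $\BZ^k\oplus X\to X\to X^*(Z(G_M))$ and the map recording $A(M)\to A(\Env G_M)$ become compatible; the freedom in $\delta$ and in the choice of basis of $\BZ^k$ is exactly enough. Everything else — normality, flatness, the cluster-variable identification — is inherited verbatim from Propositions~\ref{prop:LeviCluster} and~\ref{prop:LeviCpt} and Theorem~\ref{thm:mainthm}.
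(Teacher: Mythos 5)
Your overall machinery is the right one --- Vinberg's universal property, the base-change formula \eqref{eq:universalProp}, and the observation that a monoid homomorphism between affine spaces is monomial (which is exactly the paper's Lemma~\ref{lem:affineSpaceHom}) --- but the route you take through Lemma~\ref{lem:reductiveLevi} creates precisely the matching problem you flag as your ``main obstacle,'' and you do not resolve it. Realizing $H=G(M)$ as a standard Levi does not determine the monoid: in the paper's own example there are infinitely many flat reductive monoids $M_k$ with unit group $GL_2$ and abelianization $\BC$, distinguished precisely by the exponent in the base-change map $e^{\alpha}\mapsto x^{1+2k}$, i.e.\ by the off-diagonal Cartan entry. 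So your Step 2 must show that the specific exponent matrix $(f_{ij})$ of the \emph{given} $M$ is realizable by some run of Lemma~\ref{lem:reductiveLevi}. Your assertion that ``the freedom in $\delta$ and in the choice of basis of $\BZ^k$ is exactly enough'' is the entire content of that step and is not obvious: $\delta$ must extend from the root lattice $R$ to the weight lattice $X$, so the tuple $(\delta(\alpha_i))_i$ only ranges over a finite-index subgroup of $(\BZ^k)^I$, and $g_\delta$ over a coset of it; one would still have to prove that the target matrix lies in that coset. As written, this is a genuine gap.

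The paper's proof avoids the matching problem by reversing the logic. It first extracts the exponents from the given abelianization map, $f(e^{\alpha_i})=\prod_j x_j^{f_{ij}}$ with $f_{ij}\in\BN$ via Lemma~\ref{lem:affineSpaceHom}, and then \emph{defines} the generalized Cartan matrix by $\dot a_{ij'}=-f_{ij}$ on the mixed block, taking the associated simply connected Kac--Moody datum. There is no need to check a priori that this datum arises from Lemma~\ref{lem:reductiveLevi}, nor that its Levi is $H$: the chain $\overline{\CU}(\dot\bfs)\simeq\BC[\Env G]\otimes_{\BC[A_0]}\BC[A(M)]\cong\BC[M]$ follows directly from \eqref{eq:universalProp} together with the universal property of $\Env G$, and the identification of the unit group comes for free afterwards. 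If you reorganize your argument this way --- building $\dot A$ from the abelianization data rather than from $H$ --- your Steps 1 and 3 go through essentially verbatim and the obstacle in Step 2 disappears.
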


We first need a lemma.

\begin{lem}
\label{lem:affineSpaceHom}
Let $f:\BC[x_1,\cdots,x_n]\to \BC[y_1,\cdots,y_m]$ be a ring homomorphism such that the induced morphism $f^*:\BC^m\to \BC^n$ is a monoid homomorphism. Then $f$ maps each $x_i$ to a monomial in the $y_j$.
\end{lem}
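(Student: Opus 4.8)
The plan is to reduce the statement to an elementary fact about polynomials that are multiplicative for coordinatewise multiplication, and then settle that fact by a coefficient comparison.

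First I would set $p_i = f(x_i)\in\BC[y_1,\dots,y_m]$ for $i=1,\dots,n$, so that the morphism $f^*$ is given explicitly by $f^*(b)=(p_1(b),\dots,p_n(b))$ for $b\in\BC^m$. The hypothesis that $f^*$ is a homomorphism of monoids, where both affine spaces carry coordinatewise multiplication, unwinds coordinate by coordinate into the two conditions $p_i(b\odot c)=p_i(b)\,p_i(c)$ for all $b,c\in\BC^m$ and $p_i(\mathbf 1)=1$, where $\odot$ denotes coordinatewise multiplication and $\mathbf 1=(1,\dots,1)$. Since two polynomials agreeing at every point of $\BC^m\times\BC^m$ coincide, the first condition becomes a polynomial identity $p_i(y_1z_1,\dots,y_mz_m)=p_i(y)\,p_i(z)$ in $\BC[y_1,\dots,y_m,z_1,\dots,z_m]$. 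It therefore suffices to prove that any $p\in\BC[y_1,\dots,y_m]$ satisfying this identity together with $p(\mathbf 1)=1$ is a monomial, and then apply this to each $p_i$.

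For that I would write $p=\sum_{\alpha\in\BN^m}c_\alpha y^\alpha$. On one hand $p(y\odot z)=\sum_\alpha c_\alpha y^\alpha z^\alpha$, which involves only the "diagonal" bidegrees $(\alpha,\alpha)$; on the other hand $p(y)\,p(z)=\sum_{\alpha,\beta}c_\alpha c_\beta\,y^\alpha z^\beta$. Comparing the coefficient of $y^\alpha z^\beta$ yields $c_\alpha c_\beta=0$ whenever $\alpha\neq\beta$, and $c_\alpha^2=c_\alpha$ for every $\alpha$. Hence each $c_\alpha\in\{0,1\}$ and at most one of them is nonzero; since $p(\mathbf 1)=\sum_\alpha c_\alpha=1$, exactly one $c_\alpha$ equals $1$ and the rest vanish, so $p=y^\alpha$ is a monomial.

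I do not expect a genuine obstacle here. The only point requiring a moment's care is the translation of "monoid homomorphism" into polynomial identities: one should be explicit that a monoid homomorphism preserves the identity element $\mathbf 1$ of $(\BC^k,\odot)$, since it is precisely this normalization (equivalently $p(\mathbf 1)=1$) that rules out the degenerate solution $p=0$. Everything else is the direct coefficient comparison above.
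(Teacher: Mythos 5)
Your proof is correct and is essentially the paper's argument in more elementary dress: the paper phrases the condition as $\Delta(f(x_i))=f(x_i)\otimes f(x_i)$ for the comultiplication $\Delta$ of $\BC[y_1,\dots,y_m]$ and invokes the $\BZ^m\oplus\BZ^m$-grading, which amounts to exactly your coefficient comparison of $p_i(y\odot z)$ against $p_i(y)p_i(z)$. If anything, your explicit use of $p_i(\mathbf 1)=1$ to exclude $p_i=0$ is a point the paper leaves implicit.
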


\begin{proof}
Let
\[
\Delta: \BC[y_1,\cdots,y_m]\to \BC[y_1,\cdots,y_m]\ox \BC[y_1,\cdots,y_m]
\]
be the comultiplication of $\BC[y_1,\cdots,y_m]$. The ring $\BC[y_1,\cdots,y_m]\ox \BC[y_1,\cdots,y_m]$ is $\BZ^m\oplus \BZ^m$-graded, and the image of $\Delta$ is a graded subring with grading in the diagonal $\Delta(\BZ^m)$. Since $f^*$ is a monoid homomorphism, we have $\Delta(f(x_i))=f(x_i)\ox f(x_i)$ for each $x_i$. This is only possible if $f(x_i)$ homogeneous, i.e. a monomial in the $y_j$.
\end{proof}

\begin{proof}[Proof of Theorem~\ref{thm:affineAbelianCluster}]
Let $G=G_M$ and $\CD=({I},{A},{X},{Y},({\alpha}_i)_{i\in {I}},({\alpha}_i^\vee)_{i\in{I}})$ be the root datum associated with $G$. Let $A_0=\text{Spec }\BC[e^{\alpha_i}]_{i\in I}$ be the abelianization of $\Env G$. By the universality of the Vinberg monoid, there is an algebra homomorphism $f:\BC[A_0]\rightarrow \BC[A(M)]$, such that $\BC[M]\cong \BC[\Env G]\otimes _{\BC[A_0]}\BC[A(M)]$.

Let us fix an isomorphism
\[
A(M)\simeq \BC^k=\Spec \BC[x_1,\cdots,x_k]
\]
as algebraic monoids. Since $f$ is induced from a homomorphism of algebraic monoids, by Lemma~\ref{lem:affineSpaceHom}, $f(e^{\alpha_i}) =\prod_{j=1}^k x_j^{f_{ij}}$ for some $f_{ij}\in\BN$.

Consider a generalized Cartan matrix $\dot{A}=(\dot{a}_{ij})_{i,j\in\dot{I}}$. The index set $\dot{I}=I\sqcup J$ where $J=\{1',2'\dots,k'\}$. For $i,j\in I$, set $\dot{a}_{ij}=a_{ij}$. For $i,j\in J$, set $\dot{a}_{ij}=2\delta_{ij}$. For $i\in I, j'\in J$, set $\dot{a}_{ij'}=-f_{ij}$. The symmetrizer $(\dot{d}_i)_{i\in\dot{I}}$ is defined such that $\dot{d}_i=d_i$ for $i\in I$ and $\dot{d}_j=1$ for $j\in J$. Let $\dot{\mathcal{D}}$ be the simply-connected root datum associated with the Cartan matrix $\dot{A}$. Let $\dot{W}$ be the associated Weyl group. It contains a parabolic subgroup $W_I\subset \dot{W}$ corresponding to the subset $I\subseteq \dot{I}$. Let $w_I$ be the longest element in $W_I$. Let $\dot{\mathbf{s}}$ be a seed associated with a double reduced expression of $(w_I,w_I)$ in $W\times W$. Then by the proof of Proposition \ref{prop:LeviCpt} (cf. \eqref{eq:universalProp}), we conclude that
\[
\overline{\CU}(\dot{\bfs})\simeq \BC[\Env G] \ox_{A_0}\BC[A(M)]\cong \BC[M].  
\]
The proof is completed.
\end{proof}

\begin{example}
Let $M=M_n$ be the monoid of $n\times n$ matrices, so $G(M)=GL_n$ and $G_M=SL_n$. In this case, the subalgebra $\BC[M]^{G_M\times G_M}$ of invariant functions is isomorphic to the polynomial algebra in one variable, generated by the determinant. Hence the abelianization $A(M)$ is isomorphic to the affine line. Theorem~\ref{thm:affineAbelianCluster} gives a partially compactified upper cluster algebra structure on $\BC[M_n]$, which coincides with the cluster structure in \cite{FWZ20}*{Theorem 6.6.1}.
\end{example}

\end{document}